\documentclass[reqno]{amsart}
\usepackage{amsmath, amsthm, amssymb, amstext}
\usepackage[left=3.5cm,right=3.5cm,top=3.5cm,bottom=3.5cm]{geometry}
\usepackage{hyperref}
\usepackage[dvipsnames]{xcolor}
\usepackage{orcidlink}
\hypersetup{pdfborder={0 0 0},colorlinks}
\usepackage{enumitem}
\allowdisplaybreaks
\newtheorem{theorem}{Theorem}
\newtheorem{remark}[theorem]{Remark}
\newtheorem{lemma}[theorem]{Lemma}
\newtheorem{proposition}[theorem]{Proposition}
\newtheorem{corollary}[theorem]{Corollary}
\newtheorem{definition}[theorem]{Definition}

\newcommand{\Wpzero}[1]{W^{1,#1}_0(\Omega)}
\newcommand{\N}{\mathbb{N}}
\newcommand{\R}{\mathbb{R}}
\renewcommand{\l}{\left}
\renewcommand{\r}{\right}
\def\abs#1{\left|{#1}\right|}
\DeclareMathOperator*{\esssup}{ess\,sup}

\newcommand{\eps}{\varepsilon}
\newcommand{\round}[1]{\left(#1\right)}
\newcommand{\ph}{\varphi}
\newcommand{\close}{\overline{\Omega}}
\newcommand{\cprime}{$'$}

\numberwithin{theorem}{section}
\numberwithin{equation}{section}

\title[H\"{o}lder regularity for logarithmic double phase problems]
{H\"{o}lder regularity for logarithmic double phase problems}

\author[K. Ho]{Ky Ho\orcidlink{0000-0002-1362-9321}}
\address[K. Ho]{Department of Mathematics and Statistics, University of Economics Ho Chi Minh City, 59C, Nguyen Dinh Chieu Street, Ho Chi Minh City, Vietnam}
\email{kyhn@ueh.edu.vn}

\author[Y.-H. Kim]{Yun-Ho Kim\orcidlink{0000-0002-3558-4021}}
\address[Y.-H. Kim]{Department of Mathematics Education, Sangmyung University, Seoul, 03016, Korea}
\email{kyh1213@smu.ac.kr}

\author[P. Winkert]{Patrick Winkert\orcidlink{0000-0003-0320-7026}}
\address[P. Winkert]{Technische Universit\"{a}t Berlin, Institut f\"{u}r Mathematik, Stra\ss{}e des 17. Juni 136, 10623 Berlin, Germany}
\email{winkert@math.tu-berlin.de}

\subjclass{35J62, 35J60, 35B45, 35B65, 46E30}
\keywords{Boundedness, De Giorgi-Nash-Moser iteration, global H\"{o}lder continuity, logarithmic double phase problems, Musielak-Orlicz Sobolev spaces, nonstandard growth, Sobolev conjugate function, variable exponent growth}

\begin{document}

\begin{abstract}
	We investigate boundedness and regularity properties of weak solutions to a class of generalized logarithmic double phase equations with variable exponents. The considered operators arise from Musielak-Orlicz type energies of logarithmic double phase type and exhibit nonstandard growth features. Under general structural assumptions, we derive a priori boundedness estimates in the subcritical setting and establish boundedness of weak solutions also in the presence of critical growth terms. In addition, we prove global H\"{o}lder continuity up to the boundary by means of the De Giorgi iteration scheme, localization arguments, and the frozen functional technique. The obtained results extend several existing regularity results for double phase and related nonstandard growth problems.
\end{abstract}

\maketitle

%********************************************************************
\section{Introduction and main results}%\label{Introduction}
%********************************************************************

In a recent contribution, Arora--Crespo-Blanco--Winkert \cite{Arora-Crespo-Blanco-Winkert-2025} introduced a new class of double phase operators with logarithmic perturbation, namely
\begin{equation}\label{log-operator}
	\begin{aligned}
		\operatorname{div}\left( |\nabla u|^{p(x)-2} \nabla u  + \mu(x) \left(\log(e+|\nabla u|) + \frac{|\nabla u|}{q(x)(e+|\nabla u|)}\right)|\nabla u|^{q(x)-2} \nabla u \right),
	\end{aligned}
\end{equation}
whose associated energy functional is given by
\begin{align}\label{log-functional}
	u \mapsto \int_\Omega \left(  \frac{ \abs{\nabla u}^{p(x)} }{p(x)} + \mu(x) \frac{ \abs{\nabla u}^{q(x)} }{q(x)} \log (e + \abs{\nabla u}) \right)\,  \mathrm{d}x.
\end{align}
Here, $\Omega \subseteq \R^N$, $N \geq 2$, is a bounded domain with Lipschitz boundary $\partial \Omega$, $e$ denotes Euler's number, $p,q \in C(\overline{\Omega})$ with $1<p(x) \leq q(x)$ for all $x \in \overline{\Omega}$, and $\mu \in L^1(\Omega)$ is nonnegative. The natural variational setting for this functional is the Musielak-Orlicz Sobolev space $W^{1, \mathcal{H}_{\log}}(\Omega)$, generated by the generalized $N$-function
\begin{align*}%\label{N-function}
	\mathcal{H}_{\log}(x,t)=t^{p(x)} + \mu(x) t^{q(x)} \log (e + t)\quad\text{for all }(x,t)\in \overline{\Omega} \times [0,\infty).
\end{align*}
Logarithmic perturbations in variational integrals have been studied extensively over the past decades. In the constant exponent setting, Baroni--Colombo--Mingione \cite{Baroni-Colombo-Mingione-2016} established the local H\"{o}lder continuity of gradients of local minimizers of
\begin{align*}%\label{log-functional-Mingione1}
	w \mapsto \int_\Omega \left[  \abs{D w}^p  + a(x) \abs{D w}^p \log (e + \abs{D w}) \right]  \,\mathrm{d}x,
\end{align*}
under the assumptions $1 < p < \infty$ and $0 \leq a(\cdot) \in C^{0,\alpha} (\close)$. It is worth observing that, when $p=q$ is constant, the functional above coincides with \eqref{log-functional} up to a multiplicative constant. More recently, De Filippis--Mingione \cite{DeFilippis-Mingione-2023} proved the local H\"{o}lder continuity of gradients of local minimizers associated with
\begin{align*}%\label{log-functional-Mingione2}
	w \mapsto \int_\Omega \big(|D w|\log(1+|D w|)+a(x)|D w|^q\big)\,\mathrm{d}x
\end{align*}
assuming that $0 \leq a(\cdot)\in C^{0,\alpha}(\overline{\Omega})$ and $1<q<1+\frac{\alpha}{N}$. The latter functional originates from the model integral
\begin{align*}%\label{log-functional-Mingione3}
	w \mapsto \int_\Omega |D w|\log(1+|D w|)\,\mathrm{d}x,
\end{align*}
whose regularity properties were investigated by Fuchs--Mingione \cite{Fuchs-Mingione-2000}, as well as Marcellini--Papi \cite{Marcellini-Papi-2006}. Functionals of this type naturally arise in plasticity theory with logarithmic hardening, see, for example, Seregin--Frehse \cite{Seregin-Frehse-1999}, as well as the monograph of Fuchs--Seregin \cite{Fuchs-Seregin-2000}, where variational methods for models arising in plasticity and generalized Newtonian fluid mechanics are developed.

Another logarithmically perturbed energy closely related to \eqref{log-functional} was considered by Marcellini \cite{Marcellini-1991}, namely
\begin{align*}%\label{Marcellini-functional}
	w \mapsto \int_{\Omega} (1+|D w|^2)^\frac{p}{2} \log (1 + |D w|) \,\mathrm{d}x.
\end{align*}
This functional  may be viewed as a prototype for variational integrals with nonstandard growth. Such growth conditions were systematically introduced by Marcellini in the seminal paper \cite{Marcellini-1991} and have since become a central topic in nonlinear regularity theory. We refer to the works of Baroni \cite{Baroni-2025,Baroni-2023}, Cupini--Marcellini--Mascolo \cite{Cupini-Marcellini-Mascolo-2023}, De Filippis--Piccinini \cite{DeFilippis-Piccinini-2024}, Giannetti--Passarelli di Napoli \cite{Giannetti-Napoli-2013},  Marcellini \cite{Marcellini-2023}, as well as Ragusa--Tachikawa \cite{Ragusa-Tachikawa-2024},  for further developments and a comprehensive overview of the current state of the art.

The functional \eqref{log-functional} can be viewed as a logarithmically perturbed counterpart of the classical double phase functional
\begin{align}\label{functional-double-phase}
	\int_\Omega \left( |\nabla u|^{p} + \mu(x) |\nabla u|^{q} \right) \,\mathrm{d} x
\end{align}
which is associated with the standard double phase operator
\begin{align}\label{DO}
	-\operatorname{div}(|\nabla u|^{p-2}\nabla u +\mu(x)|\nabla u|^{q-2}\nabla u),
\end{align}
see Crespo-Blanco--Gasi\'{n}ski--Harjulehto--Winkert \cite{Crespo-Blanco-Gasinski-Harjulehto-Winkert-2022} for more details of this operator. Functionals of this type originated in the pioneering works of Marcellini \cite{Marcellini-1989,Marcellini-1991} and were subsequently further developed by Zhikov \cite{Zhikov-1995}. A distinctive feature of the functional \eqref{functional-double-phase} is that its ellipticity changes according to the behavior of the coefficient $\mu$. More precisely, on the set $\{x\in \Omega\colon \mu(x)=0\}$ the energy exhibits $p$-growth, whereas in regions where $\mu(x)>0$ the $q$-growth phase becomes active. This interplay between two different growth regimes is precisely what motivates the terminology double phase. From a modelling perspective, such functionals provide a natural framework for describing strongly heterogeneous and anisotropic media. In particular, when a body is composed of different constituents, the coefficient $\mu$ may be chosen to reflect their spatial distribution, thereby capturing transitions between distinct mechanical responses. Owing to this flexibility, double phase energies have proved useful in a variety of contexts, ranging from elasticity theory and duality theory to the study of the Lavrentiev phenomenon, see, for instance, Zhikov \cite{Zhikov-1986,Zhikov-1995,Zhikov-2011} and the monograph of Zhikov--Kozlov--Ole\u{\i}nik \cite{Zhikov-Kozlov-Oleinik-1994}. Over the past decades, double phase problems have attracted considerable attention and have developed into an active area of research. Beyond their mathematical significance, they have found applications in several branches of mathematical physics. We refer, for example, to the works of Bahrouni--R\u{a}dulescu--Repov\v{s} \cite{Bahrouni-Radulescu-Repovs-2019} on transonic flows, Benci--D'Avenia--Fortunato--Pisani  \cite{Benci-DAvenia-Fortunato-Pisani-2000} in quantum physics, and Cherfils--Il'yasov \cite{Cherfils-Ilyasov-2005} in the study of reaction-diffusion equations.

It is worth emphasizing that double phase functionals of the form \eqref{functional-double-phase} represent only a particular instance of the broader class of variational integrals with nonstandard growth and $(p,q)$-growth conditions introduced by Marcellini in his pioneering works \cite{Marcellini-1991,Marcellini-1989}. In fact, the regularity theory developed in \cite{Marcellini-1991} already encompasses the double phase framework as a special case. We also refer to the more recent contributions of Cupini--Marcellini--Mascolo \cite{Cupini-Marcellini-Mascolo-2023} and Marcellini \cite{Marcellini-2023}, where nonstandard growth problems with explicit dependence on the solution $u$ are investigated, as well as to Marcellini \cite{Marcellini-2021} for further developments in the theory and related references to recent advances in the field. Subsequently, the regularity theory for double phase problems was significantly refined through a series of influential papers by Baroni--Colombo--Mingione \cite{Baroni-Colombo-Mingione-2015,Baroni-Colombo-Mingione-2016,Baroni-Colombo-Mingione-2018} and Colombo--Mingione \cite{Colombo-Mingione-2015a,Colombo-Mingione-2015b}. A major achievement of these works was the substantial relaxation of the assumptions imposed on the modulating coefficient $\mu$. More precisely, while the regularity results in \cite{Marcellini-1991} require $\mu$ to be Lipschitz continuous in the double phase setting, the aforementioned papers establish analogous regularity properties under the considerably weaker assumption that $\mu$ is merely H\"{o}lder continuous.

Further regularity results for double phase functionals, nonautonomous variational integrals, and related classes of nonuniformly elliptic problems have been obtained in a large number of contributions. We refer, among others, to the works of Baasandorj--Byun--Oh \cite{Baasandorj-Byun-Oh-2020}, Baroni--Kuusi--Mingione \cite{Baroni-Kuusi-Mingione-2015}, Beck--Mingione \cite{Beck-Mingione-2020,Beck-Mingione-2019}, Byun--Cho--Ryu \cite{Byun-Cho-Ryu-2026}, Byun--Lee--Song \cite{Byun-Lee-Song-2025}, Byun--Oh \cite{Byun-Oh-2020}, Byun--Ok--Song \cite{Byun-Ok-Song-2022}, De Filippis \cite{De-Filippis-2018}, De Filippis--Mingione \cite{DeFilippis-Mingione-2021-a,DeFilippis-Mingione-2021-b,DeFilippis-Mingione-2020-a,DeFilippis-Mingione-2020-b}, De Filippis--Nastasi--Pacchiano \cite{DeFilippis-Nastasi-Pacchiano-2026}, De Filippis--Oh \cite{DeFilippis-Oh-2019}, De Filippis--Palatucci \cite{DeFilippis-Palatucci-2019},  Harjulehto--H\"{a}st\"{o}--Toivanen \cite{Harjulehto-Hasto-Toivanen-2017}, H\"{a}st\"{o}--Ok \cite{Hasto-Ok-2019}, Kim--Oh \cite{Kim-Oh-2025}, Nastasi--Pacchiano \cite{Nastasi-Pacchiano-2025}, Ok \cite{Ok-2018,Ok-2020}, Ragusa--Tachikawa \cite{Ragusa-Tachikawa-2016,Ragusa-Tachikawa-2020}, Song--Youn \cite{Song-Youn-2026}, and Song--Youn--Zatorska-Goldstein \cite{Song-Youn-ZatorskaGoldstein-2026}. For a comprehensive overview of recent developments in the theory of nonstandard growth and nonuniformly elliptic problems, we refer the reader to the survey article by Mingione--R\u{a}dulescu \cite{Mingione-Radulescu-2021}.

It is worth noting that a substantial part of the regularity theory for double phase problems concerns minimizers of variational functionals. In this setting, the variational structure provides a powerful tool for deriving Caccioppoli-type estimates and comparison inequalities, which constitute fundamental ingredients in regularity arguments based on the De Giorgi method. The situation is substantially different for general weak solutions of equations of the form
\begin{align*}
	-\operatorname{div}\mathcal A(x,u,\nabla u) =\mathcal B(x,u,\nabla u).
\end{align*}
In this case, the required energy estimates have to be derived directly from the weak formulation. In particular, when suitable truncations are used as test functions, the lower-order term involving $\mathcal B$ has to be controlled in a way that is compatible with the nonstandard growth of the principal part. Obtaining Caccioppoli-type estimates with the appropriate structure therefore represents a significant analytical difficulty and is a crucial step in initiating the De Giorgi iteration. Comparatively few results seem to be available for the H\"older regularity of general weak solutions to double phase equations with variable exponents. In this direction, we mention in particular the recent work of Ri--Kwon \cite{Ri-Kwon-2025}, who considered the classical variable exponent double phase setting.

In the present paper, we investigate boundedness and H\"{o}lder continuity properties of weak solutions to a broad class of generalized logarithmic double phase problems of the form
\begin{align}\label{D}
	-\operatorname{div}\mathcal{A}(x,u,\nabla u) =\mathcal{B}(x,u,\nabla u)\quad  \text{in } \Omega,  \quad u = 0  \quad \text{on } \partial \Omega.
\end{align}
Our framework encompasses a large family of nonuniformly elliptic equations whose principal part exhibits variable exponent logarithmic double phase growth. A prototypical example is provided by the operator
\begin{align*}%\label{main:operator}
	\operatorname{div}\bigg (a(x) |\nabla u|^{p(x)-2} \nabla u+ b(x)  \left(\log(\delta+ \eta|\nabla u|) + \frac{\eta|\nabla u|}{q(x)\left(\delta+ \eta|\nabla u|\right)} \right)|\nabla u|^{q(x)-2} \nabla u\bigg),
\end{align*}
which extends the logarithmic double phase operator and the classical double phase operator introduced in \eqref{log-operator} and \eqref{DO}, respectively. A distinctive feature of our setting is that the reaction term $\mathcal{B}$ is allowed to exhibit critical growth with respect to the underlying Musielak-Orlicz-Sobolev structure. More precisely, our assumptions are compatible with the continuous embedding $W^{1,\mathcal{S}}(\Omega)$ into $L^{\mathcal{S}^{\ast}}(\Omega)$, where $W^{1,\mathcal{S}}(\Omega)$ denotes the Musielak-Orlicz Sobolev space generated by the generalized $N$-function
\begin{align}\label{S}
	\mathcal{S}(x,t)=a(x) t^{p(x)}+b(x) t^{q(x)}\log(\delta +\eta t) \quad \text{for } (x, t) \in \Omega \times (0, \infty),
\end{align}
and $L^{\mathcal{S}^{\ast}}(\Omega)$ is the corresponding Musielak-Orlicz space generated by the associated critical growth function
\begin{align}\label{S*}
	\mathcal{S}^\ast(x,t):= \left( a(x)^\frac{1}{p(x)} t\right)^{p^\ast(x)}  + \left(\left(b(x) \log(\delta+\eta t)\right)^\frac{1}{q(x)} t  \right)^{q^\ast(x)}
\end{align}
for $(x, t) \in \Omega \times (0, \infty)$. Here, $\kappa^*(x):=\frac{N \kappa(x)}{N-\kappa(x)}$ denotes the Sobolev conjugate of $\kappa \in C(\overline{\Omega})$. For precise definitions of these spaces, we refer to Section~\ref{Section-2}.

We impose the following general assumptions:
\begin{enumerate}
	\item[\textnormal{(H$_0$)}]\hypertarget{H0}{}
		\begin{enumerate}
			\item[\textnormal{(i)}]\hypertarget{H0i}{}
				$\Omega \subset \mathbb{R}^N$, $N \geq 2$, is a bounded domain with Lipschitz boundary $\partial \Omega$;
			\item[\textnormal{(ii)}]\hypertarget{H0ii}{}
				$p, q \in C(\overline{\Omega})$ with $1 < p(x)\leq q(x)< N$ for all $x \in \overline{\Omega}$;
			\item[\textnormal{(iii)}]\hypertarget{H0iii}{}
				$a, b\in L^1(\Omega)$, $a,b\geq 0$, $a(x) + b(x) \geq d >0$ for a.a.\,$x \in \Omega$, and $1\leq \delta\leq e$, $\eta\geq 0$, $\delta+\eta>1$.
		\end{enumerate}
\end{enumerate}
In order to obtain optimal compact and continuous embeddings for $W^{1,\mathcal{S}}(\Omega)$, in addition to assumption \textnormal{(\hyperlink{H0}{H$_0$})}, we further assume the following:
\begin{enumerate}
	\item[\textnormal{(H$_1$)}]\hypertarget{H1}{}
	\begin{enumerate}
		\item[\textnormal{(i)}]\hypertarget{H1i}{}
			$p, q \in C^{0,1}(\overline{\Omega})$ and $a, b \in C^{0,1}(\overline{\Omega})$;
		\item[\textnormal{(ii)}]\hypertarget{H1ii}{}
			$\frac{q(x)}{p(x)} < 1 + \frac{1}{N}$ for all $x\in \overline{\Omega}$.
	\end{enumerate}
\end{enumerate}
First, we are interested in a priori bounds for problem \eqref{D} under the following subcritical growth conditions:
\begin{enumerate}
	\item[\textnormal{(H$_2$)}]\hypertarget{H2}{}
		The functions $\mathcal{A}\colon\Omega\times\R\times\R^N\to \R^N$ and $\mathcal{B}\colon\Omega \times \R\times \R^N\to \R$ are Carath\'eodory functions such that
		\begin{enumerate}[leftmargin=1cm]
			\item[\textnormal{(i)}]\hypertarget{D1i}{}
				$\displaystyle
				\begin{aligned}[t]
					&|\mathcal{A}(x,t,\xi)|\\
					&\leq \alpha_1 \Big[a(x)^{\frac{N-1}{N-p(x)}}|t|^{\frac{p^*(x)}{p'(x)}}+ \left(b(x)\log\big(\delta+\eta|t|\big)\right)^{\frac{N-1}{N-q(x)}}|t|^{\frac{q^*(x)}{q'(x)}}\\
					&\qquad\quad+a(x)|\xi|^{p(x)-1} + b(x)|\xi|^{q(x)-1}\log\big(\delta+\eta|\xi|\big)+1\Big]
				\end{aligned}
				$
			\item[\textnormal{(ii)}]\hypertarget{H2ii}{}
				$\displaystyle
				\begin{aligned}[t]
					&\mathcal{A}(x,t,\xi)\cdot \xi\\
					&\geq \alpha_2 \big[a(x)|\xi|^{p(x)}+ b(x)|\xi|^{q(x)}\log\big(\delta+\eta|\xi|\big)\big]\\ &\quad-\alpha_3\Big[a(x)^{\frac{r(x)}{p(x)}}|t|^{r(x)}+\left(b(x)\log\big(\delta+\eta|t|\big)\right)^{\frac{s(x)}{q(x)}}|t|^{s(x)}+1\Big],
				\end{aligned}
				$
			\item[\textnormal{(iii)}]\hypertarget{H2iii}{}
				$\displaystyle
				\begin{aligned}[t]
					&|\mathcal{B}(x,t,\xi)|\\
					&\leq \beta \Big[a(x)^{\frac{r(x)}{p(x)}}|t|^{r(x)-1}+\left(b(x)\log\big(\delta+\eta|t|\big)\right)^{\frac{s(x)}{q(x)}}|t|^{s(x)-1}+a(x)^{\frac{1}{p(x)}+\frac{1}{r'(x)}}|\xi|^{\frac{p(x)}{r'(x)}}\\ &\qquad\quad+\left(b(x)\log\big(\delta+\eta|t|\big)\right)^{\frac{1}{q(x)}}\left(b(x)\log\big(\delta+\eta|\xi|\big)\right)^{\frac{1}{s'(x)}}|\xi|^{\frac{q(x)}{s'(x)}}+1 \Big],
				\end{aligned}
				$
			\end{enumerate}
			for a.a.\,$x\in\Omega$ and for all $(t,\xi) \in \R\times\R^N$, where $\alpha_1,\alpha_2,\alpha_3$, $\beta$ are positive constants,  $r,s\in C(\overline{\Omega})$ satisfy $q(x)<r(x)<p^*(x)$ and $q(x)<s(x)<q^*(x)$ for all $x\in\overline{\Omega}$, and  $\kappa'(\cdot):=\frac{\kappa(\cdot)}{\kappa(\cdot)-1}$ for $\kappa(\cdot)>1$. Furthermore, when $\delta=1$, it also holds that $b(x)\leq C a(x)$ for a.a.\,$x\in\Omega$ for some constant $C>0$.
\end{enumerate}
We say that $ u\in W_0^{1,\mathcal{S}}(\Omega) $ is a weak solution of problem \eqref {D}  if
\begin{align}\label{def_sol_D}
	\int_{\Omega}\mathcal{A}(x,u,\nabla u)\cdot \nabla \varphi \,\mathrm{d} x
	= \int_{\Omega}\mathcal{B}(x,u, \nabla u)\varphi \,\mathrm{d} x
\end{align}
for all $\ph \in W_0^{1,\mathcal{S}}(\Omega)$, where $\mathcal{S}$ is given in \eqref{S}. Based on Proposition \ref{emb-main} along with hypotheses \textnormal{(\hyperlink{H0}{H$_0$})}, \textnormal{(\hyperlink{H1}{H$_1$})} and \textnormal{(\hyperlink{H2}{H$_2$})}, we know that all terms in the weak formulation \eqref{def_sol_D} are well defined.

We have the following first result.

\begin{theorem}\label{D.a-priori}
	Let hypotheses \textnormal{(\hyperlink{H0}{H$_0$})}, \textnormal{(\hyperlink{H1}{H$_1$})}, and \textnormal{(\hyperlink{H2}{H$_2$})} be satisfied. Then, every weak solution $u\in \Wpzero{\mathcal{S}}$ of problem \eqref{D} belongs to $L^\infty(\Omega)$ and satisfies the a priori estimate
	\begin{align}\label{D-bound}
		\|u\|_{L^\infty(\Omega)} \leq C \max \left\{\|u\|_{\Psi,\Omega}^{\tau_1},\|u\|_{\Psi,\Omega}^{\tau_2} \right\},
	\end{align}
	where $C,\tau_1,\tau_2$ are positive constants independent of $u$, and $\|\cdot\|_{\Psi,\Omega}$ denotes the Luxemburg norm of the Musielak-Orlicz space generated by
	\begin{align}\label{Psi}
		\Psi(x,t):= \left(a(x)^{\frac{1}{p(x)}} t\right)^{r(x)}  + \left(\left(b(x)\log(\delta+\eta t)\right)^\frac{1}{q(x)} t  \right)^{s(x)}
		\quad\text{for } (x,t)\in \close\times [0,\infty).
	\end{align}
\end{theorem}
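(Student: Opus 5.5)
We prove Theorem~\ref{D.a-priori} with a De Giorgi iteration on the level sets of $u$, carried out in the Musielak-Orlicz framework generated by $\Psi$. Since $-u$ solves a problem of the same type (with $\tilde{\mathcal A}(x,t,\xi)=-\mathcal A(x,-t,-\xi)$ and $\tilde{\mathcal B}(x,t,\xi)=-\mathcal B(x,-t,-\xi)$, which satisfy (H$_2$) with the same constants), it suffices to bound $\operatorname{ess\,sup} u^+$.

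\textbf{Energy inequality.} For $k\geq 1$ set $A_k=\{x\in\Omega\colon u(x)>k\}$ and test \eqref{def_sol_D} with $\varphi=(u-k)^+\in W^{1,\mathcal S}_0(\Omega)$.
\begin{itemize}
\item By (H$_2$)(ii), the left-hand side dominates
\begin{align*}
	\alpha_2\int_{A_k}\mathcal S(x,|\nabla u|)\,\mathrm dx-\alpha_3\int_{A_k}\big(\Psi(x,|u|)+1\big)\,\mathrm dx .
\end{align*}
\item On the right-hand side we use (H$_2$)(iii). The gradient terms are handled by Young's inequality with exponents $r'$ and $r$, respectively $s'$ and $s$. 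For instance,
\begin{align*}
	a^{\frac1p+\frac1{r'}}|\nabla u|^{\frac p{r'}}\,(u-k)
	\le \varepsilon\, a|\nabla u|^p + C_\varepsilon \big(a^{\frac1p}(u-k)\big)^{r},
\end{align*}
and similarly for the $b\log$-term. For the latter we also need $\log(\delta+\eta|t|)$ and $\log(\delta+\eta|\xi|)$ to be compatible. When $\delta=1$ and $\eta t$ is small, the logarithm can degenerate, and this is where the assumption $b\le Ca$ is used.
\item Absorbing the $\varepsilon$-terms gives
\begin{align*}
	\int_{A_k}\mathcal S(x,|\nabla (u-k)^+|)\,\mathrm dx\le C\int_{A_k}\big(\Psi(x,u)+1\big)\,\mathrm dx .
\end{align*}
\item On $A_k$ we have $u\le 2(u-k)$ when $u\geq 2k$. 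Otherwise $u\le 2k$, and the contribution is controlled by $\Psi(x,2k)\,|A_k|$. Using the $\Delta_2$-type behaviour of $\Psi$, the right-hand side is therefore bounded by
\begin{align*}
	C\Big(\int_\Omega\Psi(x,(u-k)^+)\,\mathrm dx+k^{s^+}|A_k|\Big).
\end{align*}
\end{itemize}

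\textbf{Iteration.} Fix $h<k$ and write $\Psi_k(h):=\int_\Omega\Psi(x,(u-h)^+)\,\mathrm dx$.
\begin{itemize}
\item Combine the energy inequality with the embedding $W^{1,\mathcal S}_0(\Omega)\hookrightarrow L^{\mathcal S^*}(\Omega)$ from Proposition~\ref{emb-main}. Since $r<p^*$ and $s<q^*$ uniformly on $\overline\Omega$, Hölder's inequality in the Musielak-Orlicz scale bounds $\int\Psi(x,(u-k)^+)$ by $\|(u-k)^+\|_{\mathcal S^*}$ times a positive power of $|A_k|$. Continuity of the exponents gives a uniform gap $r^*-r^+>0$, localized if needed by covering $\overline\Omega$ with small balls on which the oscillation of $p,q,r,s$ is small.
\item Chebyshev's inequality gives $|A_k|\le \Psi_k(h)/\Psi(x,k-h)$, which behaves like $\Psi_k(h)\,(k-h)^{-r^-}$.
\item Modular-norm relations (with the powers $p^\pm,q^\pm,r^\pm,s^\pm$) then yield a recursion
\begin{align*}
	Y_{n+1}\le C\,b_0^{\,n}\,Y_n^{1+\theta}, \qquad \theta>0,
\end{align*}
for $Y_n=\Psi_{k_n}$ with levels $k_n=K(2-2^{-n})$.
\end{itemize}

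\textbf{Conclusion.} By the standard fast geometric convergence lemma, $Y_n\to0$ provided $Y_0\le \Psi\text{-modular of }u$ is small relative to a power of $K$. Choosing $K$ as a power of the modular of $u$ gives $u\le 2K$ a.e. Converting the modular to the Luxemburg norm $\|u\|_{\Psi,\Omega}$, we obtain the two exponents $\tau_1,\tau_2$ according to whether the norm is at most or at least $1$. The logarithmic factors are absorbed using $\log(\delta+\eta t)\le C_\epsilon t^\epsilon$ for large $t$ and $\log(\delta+\eta t)\geq$ const when $\delta>1$ (the case $\delta=1$ again uses $b\le Ca$). This gives \eqref{D-bound}.

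The main obstacle is keeping the iteration homogeneous in the presence of variable exponents and the logarithm, which has no exact scaling. I would handle it by proving power-type comparison bounds
\begin{align*}
	\Psi(x,\lambda t)\le \lambda^{s^++\epsilon}\,\Psi(x,t)\qquad\text{for }\lambda\ge1,
\end{align*}
and the reverse bound with $r^-$, and by running the iteration locally on small balls where the exponents are nearly constant.
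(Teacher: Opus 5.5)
Your architecture is the paper's: levels $k_n=K(2-2^{-n})$, testing with $(u-k_{n+1})^+$, Young's inequality on the gradient terms of \textnormal{(H$_2$)(iii)}, Chebyshev for $|A_{k_{n+1}}|$, localization to small balls on which $q_i^+<\min\{r_i^-,s_i^-\}$, H\"older plus Proposition~\ref{emb-main} to gain a power of $|A_{k_{n+1}}|$, modular--norm conversion, Lemma~\ref{leRecur}, and $K$ chosen as a power of $\int_\Omega\Psi(x,|u|)\,\mathrm{d}x$.

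\textbf{The gap.} It lies exactly where the a priori estimate lives, namely in the dependence of the recursion constant on $K$. The final choice of $K$ requires that constant to decay like a negative power of $K$. You never check this, and the bounds you write destroy it.
You estimate the part of $\int_{A_k}\Psi(x,u)\,\mathrm{d}x$ where $u\le 2k$ by $k^{s^+}|A_k|$, and then use $|A_k|\lesssim (k-h)^{-r^-}Y_h$. But $\sup_x\Psi(x,2k)$ is of order $K^{\max\{r^+,s^+\}+\varepsilon}$, while $\inf_x\Psi(x,k-h)$ is only of order $K^{\min\{r^-,s^-\}}$. So your energy bound acquires a factor of order $K^{\max\{r^+,s^+\}-\min\{r^-,s^-\}}$, and this factor survives into the recursion constant.
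The only decay in $K$ anywhere in the scheme comes from the H\"older factor $|A_{k_{n+1}}|^{\nu}$, and $\nu$ is small. In the paper $\nu=\varepsilon/(r^++s^++\varepsilon)$, with $\varepsilon$ below the local gaps $(p^*)_i^{-}-r_i^{+}$, $(q^*)_i^{-}-s_i^{+}$, $m_i-q_i^{+}$. For genuinely variable exponents the growth wins; take for instance $p\equiv q\equiv 2$, $N=3$, $r\equiv 3$, and $s$ ranging over $[2.5,5.5]$. Hence ``choose $K$ as a power of the modular'' is not justified, and \eqref{D-bound} does not follow.
Relatedly, your standing restriction $k\ge 1$ must be dropped. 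With $K\ge1$ you only obtain $\|u\|_{L^\infty(\Omega)}\le 2\max\{1,\dots\}$, which is not \eqref{D-bound} when $\|u\|_{\Psi,\Omega}$ is small.

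\textbf{The repair.} Use the pointwise comparison \eqref{S-est.u} from the paper. On $A_{k_{n+1}}$ one has $u\le(2^{n+2}-1)(u-k_n)$. Then \eqref{ei-2}, or (aDec) of $\Psi$ applied at the same point $x$, gives $\Psi(x,u)\le C2^{cn}\Psi(x,(u-k_n)^+)$ with $C,c$ independent of $K$. No splitting at $2k$ is needed, and no passage through $\sup_x\Psi(x,2k)$ or $\inf_x\Psi(x,k-h)$.
Combine this with $|A_{k_{n+1}}|\le C(K^{-a_1}+K^{-a_2})2^{cn}Z_n$, where $0<a_1\le a_2$ are the exponents of Step~1 of the paper. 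This yields $\int_{A_{k_{n+1}}}\mathcal S(x,|\nabla u|)\,\mathrm{d}x\le C(1+K^{-a_2})2^{cn}Z_n$ for every $K>0$, so the energy constant is harmless. The decay then comes from $|A_{k_{n+1}}|^{\nu}\le C(K^{-\nu a_1}+K^{-\nu a_2})2^{cn}Z_n^{\nu}$, which is precisely what Step~3 of the paper exploits.

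\textbf{A smaller point.} The hypothesis $b\le Ca$ for $\delta=1$ is not needed for Young's inequality. The logarithmic gradient term splits as $\varepsilon\, b|\nabla u|^{q}\log(\delta+\eta|\nabla u|)+C_\varepsilon\big(b\log(\delta+\eta u)\big)^{s/q}u^{s}$, with no compatibility required between the two logarithms. Nor is it needed for Chebyshev, where \eqref{ei-1} suffices. It enters only through the (aDec) and modular--norm estimates of Propositions~\ref{P.aCon} and~\ref{P.nor-mod-S}.
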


Next, we extend Theorem \ref{D.a-priori} to the critical case. We impose the following assumptions:
\begin{enumerate}
	\item[\textnormal{(H$_3$)}]\hypertarget{H3}{}
		The functions $\mathcal{A}\colon\Omega\times\R\times\R^N\to \R^N$ and $\mathcal{B}\colon\Omega \times \R\times \R^N\to \R$ are Carath\'eodory functions such that
	\begin{enumerate}[leftmargin=1cm]
		\item[\textnormal{(i)}]\hypertarget{H3i}{}
			$\begin{aligned}[t]
				&|\mathcal{A}(x,t,\xi)|\\
				&\leq \alpha_1 \Big[a(x)^{\frac{N-1}{N-p(x)}}|t|^{\frac{p^*(x)}{p'(x)}}+ \left(b(x)\log\big(\delta+\eta|t|\big)\right)^{\frac{N-1}{N-q(x)}}|t|^{\frac{q^*(x)}{q'(x)}}\\
				&\qquad\quad+a(x)|\xi|^{p(x)-1}+ b(x)|\xi|^{q(x)-1}\log\big(\delta+\eta|\xi|\big)+1\Big],
			\end{aligned}$
		\item[\textnormal{(ii)}]\hypertarget{H3ii}{}
			$\begin{aligned}[t]
				&\mathcal{A}(x,t,\xi)\cdot \xi\\
				&\geq \alpha_2 \big[a(x)|\xi|^{p(x)}+ b(x)|\xi|^{q(x)}\log\big(\delta+\eta|\xi|\big)\big]\\
				&\quad-\alpha_3\bigg[a(x)^{\frac{p^*(x)}{p(x)}}|t|^{p^*(x)}+\left(b(x)\log\big(\delta+\eta|t|\big)\right)^{\frac{q^*(x)}{q(x)}}|t|^{q^*(x)}+1\bigg],
			\end{aligned}$
		\item[\textnormal{(iii)}]\hypertarget{H3iii}{}
			$\begin{aligned}[t]
				&|\mathcal{B}(x,t,\xi)|\\
				&\leq \beta \Big[a(x)^{\frac{p^*(x)}{p(x)}}|t|^{p^*(x)-1}+\left(b(x)\log\big(\delta+\eta|t|\big)\right)^{\frac{q^*(x)}{q(x)}}|t|^{q^*(x)-1}+a(x)^{\frac{N+1}{N}}|\xi|^{\frac{p(x)}{(p^*)'(x)}}\\
				&\qquad\quad+\left(b(x)\log\big(\delta+\eta|t|\big)\right)^{\frac{1}{q(x)}}\left(b(x)\log\big(\delta+\eta|\xi|\big)\right)^{\frac{1}{(q^*)'(x)}}|\xi|^{\frac{q(x)}{(q^*)'(x)}}+1 \Big],
			\end{aligned}$
	\end{enumerate}
	for a.a.\,$x\in\Omega$ and for all $(t,\xi) \in \R\times\R^N$, where $\alpha_1,\alpha_2,\alpha_3$ and $\beta$ are positive constants. Furthermore, when $\delta=1$, it also holds that $b(x)\leq C a(x)$ for a.a.\,$x\in\Omega$ for some constant $C>0$.
\end{enumerate}
The definition of a weak solution of problem \eqref{D} under hypothesis \textnormal{(\hyperlink{H3}{H$_3$})} is the same as in \eqref{def_sol_D}. Again, by Proposition \ref{emb-main} together with hypotheses \textnormal{(\hyperlink{H0}{H$_0$})}, \textnormal{(\hyperlink{H1}{H$_1$})} and \textnormal{(\hyperlink{H3}{H$_3$})}, this definition is well defined.

We have the following result.

\begin{theorem} \label{CD.boundedness}
	Let hypotheses \textnormal{(\hyperlink{H0}{H$_0$})}, \textnormal{(\hyperlink{H1}{H$_1$})}, and \textnormal{(\hyperlink{H3}{H$_3$})} be satisfied. Then, every weak solution $u\in \Wpzero{\mathcal{S}}$ of problem \eqref{D} belongs to $L^\infty (\Omega)$.
\end{theorem}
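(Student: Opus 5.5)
The plan is a De Giorgi iteration on truncations $A_k=\{u>k\}$, as in Theorem \ref{D.a-priori}. The new difficulty is criticality: the reaction exponents equal the Sobolev exponents $p^*,q^*$, so the embedding constant cannot absorb the lower-order terms globally. I would use the standard device for critical growth (Brezis--Kato / Ho--Sim type): exploit that $u\in L^{\mathcal{S}^*}(\Omega)$, so the modular $\int_{A_k}\mathcal{S}^*(x,|u|)\,\mathrm{d}x$ is small for $k$ large by absolute continuity of the integral. Only $u^+$ is treated; $-u$ solves an equation of the same type.

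\emph{Step 1: Caccioppoli-type inequality.} Test \eqref{def_sol_D} with $\varphi=(u-k)^+\in W_0^{1,\mathcal{S}}(\Omega)$ for $k\ge1$. Hypothesis (\hyperlink{H3ii}{H$_3$})(ii) gives the lower bound
\begin{align*}
\alpha_2\int_{A_k}\mathcal{S}(x,|\nabla u|)\,\mathrm{d}x-\alpha_3\int_{A_k}\left(\mathcal{S}^*(x,u)+1\right)\mathrm{d}x .
\end{align*}
On the right-hand side, (\hyperlink{H3iii}{H$_3$})(iii) is estimated by Young's inequality with exponents $(p^*)'$ and $p^*$ (resp. $(q^*)'$ and $q^*$). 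The gradient terms split into $\eps\,\mathcal{S}(x,|\nabla u|)$ plus a multiple of $\mathcal{S}^*(x,u)$, the exponents having been chosen exactly so that this works. For the $b$-part, $\log(\delta+\eta|\xi|)$ must be matched against $\log(\delta+\eta|t|)$; I would handle it by splitting according to whether $|\xi|\le|t|$ or $|\xi|>|t|$. The resulting estimate is
\begin{align*}
\int_{A_k}\mathcal{S}(x,|\nabla u|)\,\mathrm{d}x\le C\int_{A_k}\left(\mathcal{S}^*(x,u)+1\right)\mathrm{d}x .
\end{align*}

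\emph{Step 2: extracting a small factor.} Write $u=(u-k)+k$ on $A_k$. This splits $\int_{A_k}\mathcal{S}^*(x,u)$ into a part controlled by $\mathcal{S}^*(x,(u-k)^+)$ and a part $\lesssim k^{q^*_+}\log$-factor$\,|A_k|$. The first part is written as $\mathcal{S}^*(x,(u-k)^+)$ times a factor built from $u$ on $A_k$ that is small in $L^{N/(N-p)}$-type scale. More precisely, I would use the continuous embedding of Proposition \ref{emb-main} in modular form: the modular of $(u-k)^+$ in $L^{\mathcal{S}^*}$ is bounded by a power of the $W^{1,\mathcal{S}}$-modular. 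This is done after first restricting to $k\ge k_0$ so that $\|(u-k)^+\|$ is small, which gives $\rho_{\mathcal{S}^*}\le C\rho_{\mathcal{S}}(\nabla(u-k)^+)^{\theta}$ with $\theta>1$ in the small regime. Choosing $k_0$ with $\int_{A_{k_0}}\mathcal{S}^*(x,u)$ small (possible since $|A_k|\to0$), the critical term is absorbed into the left side. One is left with
\begin{align*}
\int_{A_k}\mathcal{S}(x,|\nabla u|)\,\mathrm{d}x\le C k^{\bar q}|A_k|\qquad\text{for } k\ge k_0 .
\end{align*}

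\emph{Step 3: iteration.} Take levels $k_n=k_*(2-2^{-n})$ and $Y_n=\int_{A_{k_n}}((u-k_n)^+)^{\gamma}$ for a fixed subcritical exponent, or equivalently the $\Psi$-modular with some $r<p^*$, $s<q^*$ as in Theorem \ref{D.a-priori}. Combining Step 2 with the embedding, Chebyshev's inequality $|A_{k_{n+1}}|\le (k_{n+1}-k_n)^{-\gamma}Y_n$, and the bound of the modular by the norm, one obtains
\begin{align*}
Y_{n+1}\le C b^n Y_n^{1+\epsilon}.
\end{align*}
The standard geometric-convergence lemma then gives $Y_n\to0$ for $k_*$ large, hence $u\le 2k_*$.

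The main obstacle is Step 2: the modular inequalities are non-homogeneous, with variable exponents and a logarithm. The absorption must therefore be done in the small-modular regime, and the nonhomogeneity in $k$ must be tracked. The extra hypothesis $b\le Ca$ when $\delta=1$ enters here to keep the logarithmic weight bounded below relative to $a$. Only qualitative boundedness is claimed, which is consistent with $k_0$ depending on $u$ itself.
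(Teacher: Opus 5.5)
There is a genuine gap: the iteration in Step 3 does not close as claimed. Steps 1 and 2 are essentially sound. In Step 1, Young's inequality with the pairs $(p^*)',p^*$ and $(q^*)',q^*$ matches the two logarithms directly, so no case split is needed.

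\textbf{Why Step 3 fails.} The problem is the level-dependence you flag at the end and never resolve. Step 2 bounds the energy by $Ck^{\bar q}|A_k|$ with $\bar q\ge (q^*)^+$. So the recursion you obtain has the form $Y_{n+1}\le K(k_*)\,b^n\big(Y_n^{1+\delta_1}+Y_n^{1+\delta_2}\big)$, where $K(k_*)$ grows like a positive power of $k_*$. The factor $(2k_*)^{\bar q}$ is only partly offset by the Chebyshev factor $k_*^{-\min\{r^-,s^-\}}$.

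Lemma \ref{leRecur} then needs $Y_0\le c\,K(k_*)^{-1/\delta_1}$. Enlarging $k_*$ shrinks this threshold together with $Y_0$, so you need a rate for $Y_0$. All you know is $\int_{A_{k_*}}\mathcal{S}^*(x,u)\,\mathrm{d}x\to0$ with no rate, which yields at best $Y_0\lesssim k_*^{-\gamma}\,o(1)$, where $\gamma$ is the gap to criticality.

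Even for the constant-exponent $p$-Laplacian with $Y_n=\int(u-k_n)_+^r$ this is only borderline. With the sharp step $\int v^r\le\|v\|_{p^*}^r|A|^{1-r/p^*}$ one finds $K(k_*)\sim k_*^{\frac rN(p^*-r)}$ and $\delta=\frac rN$, so the threshold is exactly of order $k_*^{-(p^*-r)}$. The argument closes only through the exact cancellation $k_*^{p^*-r}Y_0\le\int_{A_{k_*}}u^{p^*}\to0$.

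If you instead use the non-sharp $L^{r+\eps}$ step of Theorem \ref{D.a-priori}, the threshold decays strictly faster than $k_*^{-(p^*-r)}$, and the bookkeeping fails already there. With variable exponents, logarithms, and the nonhomogeneous norm--modular relations, no exact cancellation is available. There the exponents $p_i^-$, $q_i^++\eps$, $(p^*)_i^-$, $r^-$ and $(q^*)^+$ all differ.

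\textbf{A second point.} The exponent $\theta>1$ in Step 2 is not available globally, since $(p^*)^->q^+$ may fail. You must localize to pieces $\Omega_i$ on which $q_i^+<(p^*)_i^-$.

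\textbf{The missing idea.} Avoid any level-dependent constant by putting the critical modular itself into the iterated quantity.

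\begin{itemize}
\item Do not split $u=(u-k)+k$. Use $u\le(2^{n+2}-1)v_n$ on $A_{\kappa_{n+1}}$, so that your Step 1 gives $\int_{A_{\kappa_{n+1}}}\mathcal{S}(x,|\nabla u|)\,\mathrm{d}x\le C2^{cn}\int_{A_{\kappa_n}}\mathcal{S}^*(x,v_n)\,\mathrm{d}x$, with $C$ independent of $\kappa_*$.
\item Iterate on $Z_n=\int_{A_{\kappa_n}}\mathcal{S}(x,|\nabla u|)\,\mathrm{d}x+\int_{A_{\kappa_n}}\mathcal{S}^*(x,v_n)\,\mathrm{d}x$.
\item Choose $\kappa_*$ so that all modulars over $A_{\kappa_*}$ are below $1$. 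This puts you on the favourable branch of the norm--modular relations.
\item The localized embedding (Propositions \ref{P.nor-mod-S} and \ref{emb-main} on each $\Omega_i$) then gives $\int\mathcal{S}^*(x,v_{n+1})\,\mathrm{d}x\le C2^{cn}Z_n^{1+\mu}$, with $\mu=\min_i\frac{(p^*)_i^-}{q_i^++\eps}-1>0$.
\item Combining the two bounds gives $Z_{n+1}\le Cb^nZ_{n-1}^{1+\mu}$, with $C$ and $b$ independent of $\kappa_*$.
\item Apply Lemma \ref{leRecur} separately to the even and odd subsequences, and use $Z_0\to0$ as $\kappa_*\to\infty$.
\end{itemize}

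Your small-modular absorption idea is the right ingredient. It has to be used inside the iteration on the critical quantity, not to produce a $k$-dependent energy bound.
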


The proofs of Theorems \ref{D.a-priori} and \ref{CD.boundedness} rely heavily on the celebrated De Giorgi--Nash--Moser theory, which provides powerful iterative techniques based on truncation arguments for deriving a priori estimates for solutions of partial differential equations. The foundations of this theory were laid in the seminal works of De Giorgi \cite{De-Giorgi-1957}, Nash \cite{Nash-1958}, and Moser \cite{Moser-1960}. Since then, these methods have become indispensable tools in regularity theory and have led to fundamental results such as local and global boundedness, Harnack and weak Harnack inequalities, as well as H\"{o}lder continuity of weak solutions. For comprehensive accounts of this theory and its applications, we refer to the classical monographs of Gilbarg--Trudinger \cite{Gilbarg-Trudinger-1983}, Lady{\v{z}}enskaja--Ural{\cprime}ceva \cite{Ladyzenskaja-Uralceva-1968},  Lady{\v{z}}enskaja--Solonnikov--Ural{\cprime}ceva \cite{Ladyzenskaja-Solonnikov-Uralceva-1968} and Lieberman \cite{Lieberman-1996}. The boundedness results established in Theorems \ref{D.a-priori} and \ref{CD.boundedness} generalize a number of previous boundedness and regularity results, including those of Borer--Gasi\'{n}ski--Stapenhorst--Winkert \cite{Borer-Gasinski-Stapenhorst-Winkert-2025}, Ho--Kim \cite{Ho-Kim-2019}, Ho--Sim \cite{Ho-Sim-2017}, Ho--Kim--Winkert--Zhang \cite{Ho-Kim-Winkert-Zhang-2022}, Gasi\'{n}ski--Winkert \cite{Gasinski-Winkert-2020a, Gasinski-Winkert-2021}, Kim--Kim--Oh--Zeng \cite{Kim-Kim-Oh-Zeng-2022}, Marino--Winkert \cite{Marino-Winkert-2020, Marino-Winkert-2019}, Winkert \cite{Winkert-2010}, Winkert--Zacher \cite{Winkert-Zacher-2012,Winkert-Zacher-2015}. In a related direction, we also mention the boundedness results obtained by Barletta--Cianchi--Marino \cite{Barletta-Cianchi-Marino-2022} in the framework of Orlicz spaces.

In the last part of the paper, we are interested in the H\"{o}lder continuity of weak solutions to the elliptic equation
\begin{align}\label{E-H}
	-\operatorname{div} \mathcal{A}(x, u,\nabla u) = \mathcal{B}(x, u, \nabla u) \quad \text{in } \Omega.
\end{align}
We say that a function $f \colon \overline{\Omega} \to \mathbb{R}$ is log-H\"{o}lder continuous on $\overline{\Omega}$ if there exists a constant $C_{\log}(f) > 0$ such that
\begin{align}\label{def-log-continuity}
	-|f(x) - f(y)|\log |x - y| \leq C_{\log}(f) \quad \text{for all } x, y \in \overline{\Omega} \text{ with } 0<|x - y| \leq \frac{1}{2}.
\end{align}
We denote the set of all such functions by $C^{0, \frac{1}{|\log t|}}(\overline{\Omega})$. We impose the following assumptions, recalling the definition of  $\mathcal{S}$ given in \eqref{S}:
\begin{enumerate}
	\item[\textnormal{(H$_4$)}]\hypertarget{H4}{}
	\begin{enumerate}[leftmargin=1cm]
		\item[\textnormal{(i)}]\hypertarget{H4i}{}
			$p, q \in C^{0, \frac{1}{|\log t|}}(\overline{\Omega})$;
		\item[\textnormal{(ii)}]\hypertarget{H4ii}{}
			$0 \leq a(\cdot)\in L^\infty(\Omega)$ and $0 \leq b(\cdot) \in C^{0, \alpha}(\overline{\Omega}) \text{ for some } \alpha \in (0,1]$.
%		\item[\textnormal{(iii)}]\hypertarget{H4iii}{}
%			$\frac{q(x)}{p(x)} < 1 + \frac{1}{N}$ for all $x\in \overline{\Omega}$.
	\end{enumerate}
\end{enumerate}

\begin{enumerate}
	\item[\textnormal{(H$_5$)}]\hypertarget{H5}{}
	The functions $\mathcal{A}\colon\Omega\times\R\times\R^N\to \R^N$ and $\mathcal{B}\colon\Omega \times \R\times \R^N\to \R$ are Carath\'eodory functions such that
	\begin{enumerate}[leftmargin=1cm]
		\item[\textnormal{(i)}]\hypertarget{H5i}{}
			$|\mathcal{A}(x,t,  \xi)| \leq a_1(|t|)\left(\mathbf{s}(x,|\xi|) + 1\right)$;
		\item[\textnormal{(ii)}]\hypertarget{H5ii}{}
			$\mathcal{A}(x, t, \xi)\cdot\xi \geq a_2(|t|)\mathcal{S}(x, |\xi|)-a_3$;
		\item[\textnormal{(iii)}]\hypertarget{H5iii}{}
			$|\mathcal{B}(x, t, \xi)| \leq b_1(|t|)\left(\mathcal{S}(x, |\xi|) + b_2\right)$
	\end{enumerate}
	for a.a.\,$x \in \Omega$ and for $(t,\xi) \in \mathbb{R} \times \mathbb{R}^N$, where $a_3,b_2$ are positive constants, $a_1,b_1\colon [0,\infty)\to (0,\infty)$ are nondecreasing continuous functions, $a_2\colon [0,\infty)\to (0,\infty)$ is a nonincreasing continuous function, and
	\begin{align*}
		\qquad\mathbf{s}(x, t):=\partial_t\mathcal{S}(x,t)= p(x)a(x)t^{p(x)-1} + q(x)b(x) t^{q(x)-1}\log (\delta+\eta t)+\frac{\eta b(x) t^{q(x)} }{\delta+\eta t}
	\end{align*}
	for $(x,t )\in \Omega\times [0,\infty)$.
\end{enumerate}
We say that $u \in W^{1,\mathcal{S}}(\Omega)$ is a weak solution of problem \eqref{E-H} if
\begin{align}\label{E-V}
	\int_\Omega \mathcal{A}(x, u,\nabla u) \cdot\nabla \varphi\,\mathrm{d} x = \int_\Omega \mathcal{B}(x, u, \nabla u) \varphi\,\mathrm{d} x
\end{align}
for every $\varphi \in W^{1,\mathcal{S}}(\Omega)\cap W_0^{1,1}(\Omega)$.

Our main results on H\"{o}lder continuity are as follows.

\begin{theorem}\label{Th.H}
	Let hypotheses \textnormal{(\hyperlink{H0}{H$_0$})},  \textnormal{(\hyperlink{H4}{H$_4$})}, and \textnormal{(\hyperlink{H5}{H$_5$})} be satisfied. Assume further that
	\begin{align}\label{TA3}
		C_*:=\sup_{(x,t)\in \Omega\times[1,\infty)}t^{q(x) - p(x)-\alpha}\log(\delta+\eta t)<+\infty.
	\end{align}
	Let $u \in W^{1,\mathcal{S}}(\Omega)$  be any bounded weak solution of problem \eqref{E-H} with $u\in C^{0,\beta_0}(\partial\Omega)$ for some $\beta_0\in (0,1]$. Then, $u$  belongs to $C^{0,\beta}(\overline{\Omega})$ for some  $\beta\in (0,\beta_0]$.
\end{theorem}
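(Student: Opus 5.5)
We follow the De Giorgi route for functions of Musielak--Orlicz class, adapted to the boundary, and combine it with a freezing argument in the exponents. Set $M:=\|u\|_{L^\infty(\Omega)}$. Since $u$ is bounded, the structure functions $a_1,b_1,a_2$ in (\hyperlink{H5}{H$_5$}) can be replaced by the constants $a_1(M)$, $b_1(M)$, $a_2(M)>0$.

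\textbf{Step 1: Caccioppoli inequality.} Fix $x_0\in\overline{\Omega}$, a ball $B_R=B_R(x_0)$ with $R\le R_0$ small, and a cut-off $\zeta\in C_c^\infty(B_R)$. We test \eqref{E-V} with $\varphi=\pm\zeta^{q^+_{B_R}}(u-k)_\pm$. At the boundary we only allow levels $k$ with $\pm k\ge \pm\sup_{B_R\cap\partial\Omega}(\pm u)$, so that $\varphi\in W^{1,\mathcal S}(\Omega)\cap W^{1,1}_0(\Omega)$. The principal part is handled as follows. By (\hyperlink{H5ii}{H$_5$(ii)}), the coercive term is $a_2(M)\int\mathcal S(x,|\nabla (u-k)_\pm|)\zeta^{q^+}$. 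The term with $\nabla\zeta$ is absorbed using (\hyperlink{H5i}{H$_5$(i)}) together with a Young inequality for the pair $(\mathbf s,\mathcal S)$, since $\mathbf s(x,t)t\simeq\mathcal S(x,t)$ because $\delta\ge1$. The right-hand side is controlled by (\hyperlink{H5iii}{H$_5$(iii)}), which gives $b_1(M)\int(\mathcal S(x,|\nabla u|)+b_2)(u-k)_\pm\zeta^{q^+}$. We can absorb this only if $\sup_{B_R}(u-k)_\pm$ is small. The standard fix is to restrict to levels with $\sup_{B_R}(u-k)_\pm\le\omega$ and to work on balls where the oscillation $\omega\le\omega_0:=a_2(M)/(2b_1(M))$. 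Alternatively one uses the test function $\zeta^{q^+}(u-k)_\pm e^{\lambda(u-k)_\pm}$ with $\lambda=2b_1(M)/a_2(M)$, which absorbs the gradient term outright. I would take the exponential test function, since it avoids any smallness assumption. The result is
\[
\int_{B_r}\mathcal S(x,|\nabla(u-k)_\pm|)\,\mathrm dx\le C\int_{B_R}\mathcal S\Big(x,\frac{(u-k)_\pm}{R-r}\Big)\mathrm dx+C|A^\pm_{k,R}|,
\]
where $A^\pm_{k,R}=B_R\cap\{\pm(u-k)>0\}$ and $u$ is extended by its boundary values outside $\Omega$ for the boundary case.

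\textbf{Step 2: Freezing and De Giorgi decay.} Here we use the log-H\"older continuity (\hyperlink{H4i}{H$_4$(i)}) and the H\"older continuity of $b$. On $B_R$ we compare $\mathcal S$ with the frozen function
\[
\mathcal S_0(t)=a(x)t^{p^-_{B_R}}+b^+_{B_R}t^{q^+_{B_R}}\log(\delta+\eta t),
\]
or rather with a two-sided sandwich of it. Log-H\"older continuity yields $t^{p^+_R-p^-_R}\le C$ for $t\le R^{-N}$. The oscillation $b^+_R-b^-_R\le CR^\alpha$, combined with \eqref{TA3}, controls the mismatch $R^\alpha t^{q-p}\log(\delta+\eta t)\le C_* t^{\alpha}R^\alpha\le C$ for $t\lesssim 1/R$ in the regime $b^-_R\lesssim R^\alpha$ (the ``$p$-phase''). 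In the complementary ``$(p,q)$-phase'', $b\simeq b^-_R$ on $B_R$, so $\mathcal S$ is comparable to an $x$-independent Orlicz function. In both cases we get $\mathcal S(x,t)\simeq\mathcal S_0(t)$ for $1\le t\le C\omega/R$, up to additive constants of order $|A_{k,R}|$.

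With this we run the standard De Giorgi lemma for the autonomous Orlicz function $\mathcal S_0$, which satisfies $\Delta_2$ and the inverse condition with indices $p^-\le i\le q^++1$. We use levels $k_j$, the Sobolev inequality in $W^{1,1}$ applied to $\mathcal S_0^{1/p^-}$, and the measure condition at the boundary from the Lipschitz domain (the exterior cone gives $|B_R\setminus\Omega|\ge c|B_R|$, where $(u-k)_\pm=0$). The output is the oscillation reduction
\[
\operatorname{osc}_{B_{R/4}\cap\Omega}u\le\theta\operatorname{osc}_{B_R\cap\Omega}u+CR^{\beta_0}+CR
\]
with some $\theta<1$. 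The term $R^{\beta_0}$ comes from the boundary datum, and the term $R$ is the ``inhomogeneous'' remainder, which comes from $a_3$, $b_2$ and the additive constants through the alternative $\omega\le CR$.

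\textbf{Step 3: Iteration and the main obstacle.} Iterating the oscillation reduction gives $\operatorname{osc}_{B_\rho}u\le C\rho^\beta$ for some $\beta\le\beta_0$. We treat interior and boundary points and combine them in the usual way to get $u\in C^{0,\beta}(\overline\Omega)$. I expect the main obstacle to be Step 2, namely the uniform comparison between $\mathcal S$ and its frozen version at gradient scale $\omega/R$. The logarithm makes $\mathcal S_0$ non-power, so the constants in the De Giorgi lemma must depend only on $p^\pm,q^\pm$ and on \eqref{TA3}. The case distinction between the $p$-phase and the $(p,q)$-phase also has to be set up carefully, because the intrinsic scale depends on $\omega$.
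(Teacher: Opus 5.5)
Your plan is essentially the paper's proof. It has the same ingredients: a Caccioppoli inequality from testing \eqref{E-V} with a cut-off times $(\omega-k)_+$; the split $b_0\le 4[b]_{0,\alpha}R^\alpha$ versus $b_0>4[b]_{0,\alpha}R^\alpha$, handled by \eqref{TA3} with log-H\"older continuity in the first case and by the frozen functions $\mathcal S_1,\mathcal S_2$ compared only at the scale $N_0R^{-1}$ via \eqref{H47} in the second; the two-step De Giorgi argument with Lemma~\ref{L2.6} and the fat-complement property at boundary points; and Lemma~\ref{L2.7} for the iteration.

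The only real deviation is your exponential test function, which is a valid alternative to the paper's restriction to levels $k\ge\sup\omega-\sigma M$ with $\sigma$ from \eqref{H2}. Two details need fixing in your sketch. First, the cut-off power must be at least $\mathfrak{s}^+$, which can exceed $q^+_{B_R}$ because of the last term in $\mathbf s$. Second, the lower bound for the gradient term must use the frozen function with minimal exponents and $b_0=\inf_{\Omega_R}b$, since comparability only on $1\le t\le C\omega/R$ gives no control of $\mathcal S(x,|\nabla u|)$.
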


\begin{theorem}\label{Th.H'}
	Let hypotheses \textnormal{(\hyperlink{H0}{H$_0$})}, \textnormal{(\hyperlink{H1}{H$_1$})}, and  \textnormal{(\hyperlink{H3}{H$_3$})} be satisfied. Then, every weak solution $u\in \Wpzero{\mathcal{S}}$ of problem \eqref{D} belongs to  $C^{0,\beta}(\overline{\Omega})$ for some  $\beta\in (0,1]$.
\end{theorem}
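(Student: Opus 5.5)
The natural route is to deduce Theorem~\ref{Th.H'} from Theorem~\ref{CD.boundedness} and Theorem~\ref{Th.H}. By Theorem~\ref{CD.boundedness}, every weak solution $u\in \Wpzero{\mathcal{S}}$ of \eqref{D} lies in $L^\infty(\Omega)$. Set $M:=\|u\|_{L^\infty(\Omega)}$. Since $u=0$ on $\partial\Omega$ in the trace sense, and $u$ is bounded and (as will follow) continuous up to the boundary, the boundary datum is trivially in $C^{0,\beta_0}(\partial\Omega)$ with $\beta_0=1$. Moreover, a weak solution in the sense of \eqref{def_sol_D} is also a weak solution in the sense of \eqref{E-V}, because $W^{1,\mathcal{S}}(\Omega)\cap W_0^{1,1}(\Omega)\subseteq \Wpzero{\mathcal{S}}$ under (H$_0$)--(H$_1$), by the density of smooth functions guaranteed by the Lipschitz regularity of $p,q,a,b$.

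It remains to verify (H$_4$), (H$_5$) and \eqref{TA3}. For (H$_4$), Lipschitz exponents are log-H\"older continuous, $a\in C^{0,1}(\overline\Omega)\subset L^\infty(\Omega)$, and $b\in C^{0,1}$, so we may take $\alpha=1$. Then \eqref{TA3} becomes
\[
\sup_{t\ge 1} t^{q(x)-p(x)-1}\log(\delta+\eta t)<\infty .
\]
This holds because (H$_1$)(ii) together with $p<N$ gives $q(x)-p(x)<p(x)/N<1$. Since $q-p$ is continuous on the compact set $\overline\Omega$, we get $q-p-1\le -\epsilon$ uniformly, and the logarithm is absorbed by this negative power.

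For (H$_5$), we freeze $|t|\le M$; since $u$ only takes such values, we may truncate $\mathcal A$ and $\mathcal B$ in $t$ outside $[-M,M]$ without changing the equation.
\begin{itemize}
\item \emph{Growth of $\mathcal A$.} From (H$_3$)(i), the $t$-terms are bounded by a constant $c(M)$, using $a,b\in L^\infty$. The terms $a|\xi|^{p-1}+b|\xi|^{q-1}\log(\delta+\eta|\xi|)$ are bounded by $\mathbf{s}(x,|\xi|)$ up to constants, since $p,q\ge p^->1$. This gives (H$_5$)(i) with $a_1$ constant.
\item \emph{Coercivity.} Similarly, (H$_3$)(ii) yields $\mathcal A\cdot\xi\ge \alpha_2\mathcal S(x,|\xi|)-c(M)$, which is (H$_5$)(ii).
\item \emph{Growth of $\mathcal B$.} From (H$_3$)(iii), the $t$-powers are bounded by $c(M)$. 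The gradient terms have the form $a^{(N+1)/N}|\xi|^{p/(p^*)'}$ and $(\cdots)(b\log(\delta+\eta|\xi|))^{1/(q^*)'}|\xi|^{q/(q^*)'}$. Since $(p^*)'>1$, Young's inequality bounds them by $c\,(a|\xi|^p+b|\xi|^q\log(\delta+\eta|\xi|)+1)$, i.e. by $c(\mathcal S(x,|\xi|)+1)$. The factor $(b\log(\delta+\eta|t|))^{1/q}$ is bounded for $|t|\le M$. This gives (H$_5$)(iii) with $b_1$ constant.
\end{itemize}
Applying Theorem~\ref{Th.H} with $\beta_0=1$ then gives $u\in C^{0,\beta}(\overline\Omega)$.

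The main obstacle is the bookkeeping in the third item, matching exponents via Young's inequality, including the degenerate case $\delta=1$, where $\log(\delta+\eta t)$ vanishes at $t=0$. There one uses $b\le Ca$ from (H$_3$) so that the $b$-terms for small $|\xi|$ are dominated by $a$-terms. A secondary subtle point is the compatibility of the two notions of weak solution and of the zero boundary datum, which relies on $u\in \Wpzero{\mathcal{S}}$ having zero trace and on the density of smooth functions.
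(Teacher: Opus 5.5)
Your overall plan matches the paper's: boundedness from Theorem~\ref{CD.boundedness}, then the H\"older machinery of Section~\ref{Holder}. Your checks of the structural hypotheses are sound:
\begin{itemize}
\item (H$_4$) with $\alpha=1$ is a legitimate alternative to the paper's $\alpha=p^+/N$, since $q-p<p/N<1$, and \eqref{TA3} then follows.
\item Truncating $\mathcal A,\mathcal B$ in $t$ at level $M$ is the right way to get a $t$-independent $a_3$ in (H$_5$)(ii).
\item The case $\delta=1$ is not actually an obstacle in (H$_5$)(iii). With $\theta=1/(q^*)'<1$ and $X=|\xi|^{q}\log(\delta+\eta|\xi|)$ one has $b^{1/q+\theta}X^\theta=b^{1+1/N}X^\theta\le\|b\|_{L^\infty(\Omega)}^{1/N}\,b\,(X+1)$, so $b\le Ca$ is not needed.
\end{itemize}

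The gap is the step that lets you apply Theorem~\ref{Th.H} as a black box. You need $u$ to satisfy \eqref{E-V} for every $\varphi\in W^{1,\mathcal S}(\Omega)\cap W_0^{1,1}(\Omega)$. You get this from the inclusion $W^{1,\mathcal S}(\Omega)\cap W_0^{1,1}(\Omega)\subseteq\Wpzero{\mathcal S}$, which is the nontrivial half of \eqref{ESet}. The authors state in their closing remark that they could not verify exactly this identity.

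``Density of smooth functions'' does not yield it. Density of $C^\infty(\overline\Omega)$ in $W^{1,\mathcal S}(\Omega)$ gives approximants that need not vanish near $\partial\Omega$. The usual way to move supports into $\Omega$ is to translate or dilate in boundary charts and then mollify. That step is not controlled by the modular, because $\mathcal S(x+h,t)\le C(\mathcal S(x,t)+1)$ fails when the exponents vary. Even for $\mathcal S(x,t)=t^{p(x)}$ the identity is a genuine theorem proved under log-H\"older continuity.

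The paper avoids the issue entirely. The proofs of Proposition~\ref{Le-HCac.I} and Lemma~\ref{Le-HCac.B}, on which Lemmas~\ref{Le-HR.I} and~\ref{Le-HR.B} rest, test the equation only with $\varphi=\zeta^{\mathfrak s^+}(\omega-k)_+$. For $u\in\Wpzero{\mathcal S}$ these particular functions lie in $\Wpzero{\mathcal S}$:
\begin{itemize}
\item In the interior case, $\varphi$ has compact support in $\Omega$.
\item In the boundary case, $k\ge\sup_{(\partial\Omega)_R}\omega=0$. Then $(\omega-k)_+\in\Wpzero{\mathcal S}$ by the same truncation property used for $v_n$ in Section~\ref{Section-3}, and multiplying by the $C^1$ cutoff preserves this.
\end{itemize}
So, instead of invoking Theorem~\ref{Th.H}, rerun its proof with \eqref{def_sol_D} and check admissibility only for these test functions.

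Similarly, the boundary hypothesis enters only through \eqref{ocs-b}, with the oscillation understood in the $W^{1,1}$ sense. It holds with zero oscillation because $u\in W_0^{1,1}(\Omega)$, so you do not need the circular appeal to continuity ``as will follow.''
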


It is worth pointing out that if $1<\delta\leq e$ and $\eta=0$, then condition \eqref{TA3} is equivalent to
\begin{align*}
	q(x)\leq p(x)+\alpha\quad \text{for all } x\in\overline{\Omega},
\end{align*}
which is precisely the balance condition appearing in the H\"{o}lder continuity result of Ri--Kwon \cite{Ri-Kwon-2025} for the classical variable exponent double phase setting with a right-hand side of growth not exceeding the $p^*(\cdot)$-critical level. If $\eta>0$, then \eqref{TA3} is satisfied if
\begin{align*}
	q(x)< p(x)+\alpha\quad \text{for all } x\in\overline{\Omega}.
\end{align*}

We next derive three direct consequences of Theorem~\ref{Th.H'} for relevant classes of double phase operators. For $1\leq\delta\leq e$ and $\eta\geq0$ with $\delta+\eta>1$, set
\begin{align*}
	\mathcal{H}_{\delta,\eta}(x,t) :=t^{p(x)}+\mu(x)t^{q(x)}\log(\delta+\eta t) \quad\text{for }(x,t)\in\Omega\times[0,\infty).
\end{align*}

\begin{corollary}[Generalized variable exponent logarithmic double phase problem]\label{corollary-general-log-double-phase}
	Let $\Omega\subset\R^N$, $N\geq2$, be a bounded domain with Lipschitz boundary $\partial\Omega$, $p,q\in C^{0,1}(\overline{\Omega})$, and $0\leq\mu\in C^{0,1}(\overline{\Omega})$. Assume that
	\begin{align*}
		1<p(x)\leq q(x)<N \quad\text{and}\quad \frac{q(x)}{p(x)}<1+\frac{1}{N} \quad\text{for all }x\in\overline{\Omega}.
	\end{align*}
	Let $1\leq\delta\leq e$ and $\eta\geq0$ satisfy $\delta+\eta>1$.
	Suppose that $f\colon\Omega\times\R\times\R^N\to\R$ is a Carath\'{e}odory
	function such that
	\begin{align*}
		|f(x,t,\xi)|
		\leq C\Bigg[& |t|^{p^*(x)-1} +\big(\mu(x)\log(\delta+\eta|t|)\big)^{\frac{q^*(x)}{q(x)}} |t|^{q^*(x)-1}\\ &+|\xi|^{\frac{p(x)}{(p^*)'(x)}} +\big(\mu(x)\log(\delta+\eta|t|)\big)^{\frac1{q(x)}} \big(\mu(x)\log(\delta+\eta|\xi|)\big)^{\frac1{(q^*)'(x)}} |\xi|^{\frac{q(x)}{(q^*)'(x)}}+1\Bigg]
	\end{align*}
	for a.a.\,$x\in\Omega$ and for all $(t,\xi)\in\R\times\R^N$, where $C>0$.
	Then every weak solution $u\in W_0^{1,\mathcal{H}_{\delta,\eta}}(\Omega)$ of
	\begin{align*}
		&-\operatorname{div}\left( |\nabla u|^{p(x)-2}\nabla u +\mu(x)\left( \log(\delta+\eta|\nabla u|) +\dfrac{\eta|\nabla u|}{q(x)(\delta+\eta|\nabla u|)} \right) |\nabla u|^{q(x)-2}\nabla u \right)\\
		&=f(x,u,\nabla u) \quad \text{in }\Omega,
	\end{align*}
	with $u=0$ on $\partial\Omega$, belongs to $C^{0,\beta}(\overline{\Omega})$ for some $\beta\in(0,1]$.
\end{corollary}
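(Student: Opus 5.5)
The plan is to deduce the corollary directly from Theorem~\ref{Th.H'} by choosing $a\equiv 1$, $b=\mu$, and
\begin{align*}
	\mathcal{A}(x,t,\xi)=|\xi|^{p(x)-2}\xi+\mu(x)\left(\log(\delta+\eta|\xi|)+\frac{\eta|\xi|}{q(x)(\delta+\eta|\xi|)}\right)|\xi|^{q(x)-2}\xi,
	\qquad \mathcal{B}(x,t,\xi)=f(x,t,\xi).
\end{align*}
With these choices $\mathcal{S}=\mathcal{H}_{\delta,\eta}$, so $W^{1,\mathcal{S}}_0(\Omega)=W_0^{1,\mathcal{H}_{\delta,\eta}}(\Omega)$. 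The weak formulation \eqref{def_sol_D} is then exactly the weak formulation of the stated Dirichlet problem. It therefore suffices to verify \textnormal{(\hyperlink{H0}{H$_0$})}, \textnormal{(\hyperlink{H1}{H$_1$})} and \textnormal{(\hyperlink{H3}{H$_3$})}.

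\textbf{Hypotheses (H$_0$) and (H$_1$).} Conditions (i) and (ii) of \textnormal{(\hyperlink{H0}{H$_0$})} are assumed directly. For (iii), we have $a+b\geq 1=:d$, and $a,b\in L^1(\Omega)$ because $\mu$ is Lipschitz on a bounded domain. The constants $\delta,\eta$ satisfy the required bounds by assumption. Hypothesis \textnormal{(\hyperlink{H1}{H$_1$})} holds since $p,q,\mu\in C^{0,1}(\overline{\Omega})$, constants are Lipschitz, and $q/p<1+1/N$ is assumed. The extra condition imposed when $\delta=1$, namely $b\leq Ca$, holds with $C=\|\mu\|_\infty$ because $a\equiv1$.

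\textbf{Hypothesis (H$_3$).} For (i), since $\eta|\xi|/(\delta+\eta|\xi|)\leq 1$, $q(x)>1$, and $\log(\delta+\eta|\xi|)\geq \log\delta\geq 0$, we get
\begin{align*}
	|\mathcal{A}|\leq |\xi|^{p-1}+\mu|\xi|^{q-1}\log(\delta+\eta|\xi|)+\mu|\xi|^{q-1}.
\end{align*}
The last term must be absorbed. If $\delta>1$, it is bounded by $(\log\delta)^{-1}\mu|\xi|^{q-1}\log(\delta+\eta|\xi|)$. If $\delta=1$, then $\eta>0$; for $|\xi|\leq 1$ we have $\mu|\xi|^{q-1}\leq\|\mu\|_\infty$, which is absorbed by the constant term, and for $|\xi|\geq1$ we use $\log(1+\eta|\xi|)\geq\log(1+\eta)>0$. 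For (ii), we compute
\begin{align*}
	\mathcal{A}\cdot\xi=|\xi|^p+\mu|\xi|^q\log(\delta+\eta|\xi|)+\text{(a nonnegative term)}\geq \mathcal{S}(x,|\xi|),
\end{align*}
so (ii) holds with $\alpha_2=1$ and any $\alpha_3>0$. For (iii), the growth bound on $f$ is exactly \textnormal{(H$_3$)(iii)} with $a\equiv1$, because $a^{p^*/p}=a^{(N+1)/N}=1$.

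Theorem~\ref{Th.H'} then gives $u\in C^{0,\beta}(\overline{\Omega})$. The only delicate point is the absorption of the lower-order term $\mu|\xi|^{q-1}$ in (i) when $\delta=1$, which the case split above handles.
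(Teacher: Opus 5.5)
Your proposal is correct and is exactly the specialization the paper has in mind when it calls this corollary a direct consequence of Theorem~\ref{Th.H'}: take $a\equiv1$, $b=\mu$, $\mathcal{B}=f$, and check (H$_0$), (H$_1$), (H$_3$) as you do. Two minor remarks: the case split in (H$_3$)(i) can be avoided using $\frac{\eta|\xi|}{\delta+\eta|\xi|}\leq\log\bigl(1+\frac{\eta|\xi|}{\delta}\bigr)\leq\log(\delta+\eta|\xi|)$, and when $\mu\equiv0$ you should take $C=\|\mu\|_\infty+1$ in the condition $b\leq Ca$ so that the constant is positive.
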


In particular, by choosing $\delta=e$ and $\eta=0$, we obtain the following result for the variable exponent double phase operator.

\begin{corollary}[Variable exponent double phase problem]
	Let $\Omega$, $p$, $q$, and $\mu$ be as in Corollary \ref{corollary-general-log-double-phase}. Suppose that $f\colon\Omega\times\R\times\R^N\to\R$ is a Carath\'{e}odory function satisfying
	\begin{align*}
		|f(x,t,\xi)| \leq C\Big[& |t|^{p^*(x)-1} +\mu(x)^{\frac{q^*(x)}{q(x)}}|t|^{q^*(x)-1} +|\xi|^{\frac{p(x)}{(p^*)'(x)}}\\ &+\mu(x)^{\frac1{q(x)}+\frac1{(q^*)'(x)}} |\xi|^{\frac{q(x)}{(q^*)'(x)}}+1\Big]
	\end{align*}
	for a.a.\,$x\in\Omega$ and for all $(t,\xi)\in\R\times\R^N$, where $C>0$. Then every weak solution $u\in W_0^{1,\mathcal{H}}(\Omega)$, where
	\begin{align*}
		\mathcal{H}(x,t):=t^{p(x)}+\mu(x)t^{q(x)},
	\end{align*}
	of
	\begin{alignat*}{2}
		-\operatorname{div}\left( |\nabla u|^{p(x)-2}\nabla u +\mu(x)|\nabla u|^{q(x)-2}\nabla u \right) &=f(x,u,\nabla u)
		\quad &&\text{in }\Omega,\\
		u&=0&&\text{on }\partial\Omega,
	\end{alignat*}
	belongs to $C^{0,\beta}(\overline{\Omega})$ for some $\beta\in(0,1]$.
\end{corollary}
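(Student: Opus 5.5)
The corollary is a special case of the main results, specifically the variable exponent double phase corollary just stated. We specialize Corollary~\ref{corollary-general-log-double-phase}, which itself follows from Theorem~\ref{Th.H'}, by taking $\delta=e$ and $\eta=0$.

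With this choice, $\log(\delta+\eta t)=\log e=1$ for all $t\ge 0$, and $\delta+\eta=e>1$. Hence $\mathcal{H}_{e,0}(x,t)=t^{p(x)}+\mu(x)t^{q(x)}=\mathcal{H}(x,t)$, and the spaces coincide: $W_0^{1,\mathcal{H}_{e,0}}(\Omega)=W_0^{1,\mathcal{H}}(\Omega)$.

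In the operator of Corollary~\ref{corollary-general-log-double-phase}, the bracket becomes
\[
\log e+\frac{0}{q(x)e}=1,
\]
so it reduces exactly to the double phase operator $-\operatorname{div}(|\nabla u|^{p(x)-2}\nabla u+\mu(x)|\nabla u|^{q(x)-2}\nabla u)$.

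We next check the growth condition on $f$. In the general condition, the term $(\mu\log(\delta+\eta|t|))^{q^*/q}|t|^{q^*-1}$ becomes $\mu^{q^*/q}|t|^{q^*-1}$. The mixed gradient term becomes
\[
\mu^{1/q}\mu^{1/(q^*)'}|\xi|^{q/(q^*)'}=\mu^{\frac1{q}+\frac1{(q^*)'}}|\xi|^{q/(q^*)'} .
\]
So the assumed bound on $f$ is precisely the hypothesis of Corollary~\ref{corollary-general-log-double-phase} for these parameters. The regularity assumptions on $\Omega$, $p$, $q$, $\mu$ are taken verbatim from that corollary.

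Applying Corollary~\ref{corollary-general-log-double-phase} therefore gives $u\in C^{0,\beta}(\overline{\Omega})$.

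For completeness, Corollary~\ref{corollary-general-log-double-phase} itself follows from Theorem~\ref{Th.H'}. Take $a\equiv 1$ and $b=\mu$, so that (H$_0$)(iii) holds with $d=1$, and (H$_1$) is given. Set $\mathcal{A}(x,t,\xi)=|\xi|^{p-2}\xi+\mu\bigl(\log(\delta+\eta|\xi|)+\frac{\eta|\xi|}{q(\delta+\eta|\xi|)}\bigr)|\xi|^{q-2}\xi$ and $\mathcal{B}=f$.

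The condition $b\le Ca$ required when $\delta=1$ holds because $\mu$ is bounded. For (H$_3$)(i),(ii):
\begin{itemize}
\item $\mathcal{A}\cdot\xi\ge |\xi|^p+\mu|\xi|^q\log(\delta+\eta|\xi|)$, since the extra term is nonnegative;
\item $|\mathcal{A}|\le |\xi|^{p-1}+\mu|\xi|^{q-1}(\log(\delta+\eta|\xi|)+1)$, and the "$+1$" is absorbed because $\log(\delta+\eta|\xi|)\ge\log\delta$. When $\delta=1$, we use $|\xi|^{q-1}\le |\xi|^{p-1}+1$ together with $\mu\le C$.
\end{itemize}
(H$_3$)(iii) matches $f$'s bound since $a^{(N+1)/N}=1$.

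The only mildly delicate point is this absorption when $\delta=1$ and $|\xi|$ is small, which is handled as above. No real obstacle arises in the present specialization.
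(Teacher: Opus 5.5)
Your proposal is correct and follows the paper's route: the corollary is obtained by setting $\delta=e$, $\eta=0$ in Corollary~\ref{corollary-general-log-double-phase}, and your checks of the operator, the space, and the growth condition on $f$ are accurate. One small slip in your optional re-derivation of that corollary: the inequality $|\xi|^{q-1}\le|\xi|^{p-1}+1$ fails for $|\xi|>1$ when $q>p$, so for $\delta=1$ you should instead use $\log(1+\eta|\xi|)\ge\log(1+\eta)>0$ for $|\xi|\ge1$ (recall $\eta>0$ then) and $\mu|\xi|^{q-1}\le\|\mu\|_{L^\infty(\Omega)}$ for $|\xi|<1$; this case does not arise here anyway, since $\delta=e$.
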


Choosing $\delta=e$ and $\eta=1$ in Corollary \ref{corollary-general-log-double-phase} gives the corresponding result for the variable exponent logarithmic double phase operator recently introduced by Arora--Crespo-Blanco--Winkert \cite{Arora-Crespo-Blanco-Winkert-2025}.

\begin{corollary}[Variable exponent logarithmic double phase problem]
	Let $\Omega$, $p$, $q$, and $\mu$ be as in Corollary \ref{corollary-general-log-double-phase}. Suppose that $f\colon\Omega\times\R\times\R^N\to\R$ is a Carath\'eodory function satisfying
	\begin{align*}
		|f(x,t,\xi)| \leq C\Big[& |t|^{p^*(x)-1} +\big(\mu(x)\log(e+|t|)\big)^{\frac{q^*(x)}{q(x)}} |t|^{q^*(x)-1}\\ &+|\xi|^{\frac{p(x)}{(p^*)'(x)}} +\big(\mu(x)\log(e+|t|)\big)^{\frac1{q(x)}} \big(\mu(x)\log(e+|\xi|)\big)^{\frac1{(q^*)'(x)}} |\xi|^{\frac{q(x)}{(q^*)'(x)}}+1\Big]
	\end{align*}
	for a.a.\,$x\in\Omega$ and all $(t,\xi)\in\R\times\R^N$, where $C>0$. Then every weak solution $u\in W_0^{1,\mathcal{H}_{\log}}(\Omega)$, where
	\begin{align*}
		\mathcal{H}_{\log}(x,t) :=t^{p(x)}+\mu(x)t^{q(x)}\log(e+t),
	\end{align*}
	of
	\begin{align*}
		&-\operatorname{div}\Bigg( |\nabla u|^{p(x)-2}\nabla u +\mu(x)\left( \log(e+|\nabla u|) +\dfrac{|\nabla u|}{q(x)(e+|\nabla u|)}\right) |\nabla u|^{q(x)-2}\nabla u \Bigg)\\
		&=f(x,u,\nabla u) \quad\text{in }\Omega,
	\end{align*}
	with $u=0$ on $\partial\Omega$,
	belongs to $C^{0,\beta}(\overline{\Omega})$ for some $\beta\in(0,1]$.
\end{corollary}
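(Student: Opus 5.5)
The statement immediately above is the last corollary, the one for the variable exponent logarithmic double phase operator. The plan is to read it off Corollary \ref{corollary-general-log-double-phase} with $\delta=e$ and $\eta=1$. Corollary \ref{corollary-general-log-double-phase} is in turn a direct application of Theorem \ref{Th.H'}, so we argue through that theorem.

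First we check the parameters. We have $1\leq e\leq e$, $\eta=1\geq 0$ and $\delta+\eta=e+1>1$, so the admissible range in Corollary \ref{corollary-general-log-double-phase} is respected. With these values, $\mathcal{H}_{\delta,\eta}$ becomes exactly $\mathcal{H}_{\log}$, and therefore $W_0^{1,\mathcal{H}_{e,1}}(\Omega)=W_0^{1,\mathcal{H}_{\log}}(\Omega)$. The operator in Corollary \ref{corollary-general-log-double-phase} reduces to the stated one, since
\begin{align*}
	\log(\delta+\eta|\nabla u|)+\frac{\eta|\nabla u|}{q(x)(\delta+\eta|\nabla u|)}=\log(e+|\nabla u|)+\frac{|\nabla u|}{q(x)(e+|\nabla u|)}.
\end{align*}
The growth bound on $f$ is likewise the bound of Corollary \ref{corollary-general-log-double-phase} with $\log(\delta+\eta\,\cdot)=\log(e+\cdot)$. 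The assumptions on $\Omega,p,q,\mu$ are the same by hypothesis, so the conclusion $u\in C^{0,\beta}(\overline{\Omega})$ follows at once.

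For completeness, we now say how Corollary \ref{corollary-general-log-double-phase} follows from Theorem \ref{Th.H'}. Take $a\equiv1$ and $b=\mu$ in \eqref{S}; then $\mathcal{S}=\mathcal{H}_{e,1}$. Hypothesis (\hyperlink{H0}{H$_0$}) holds with $d=1$. Hypothesis (\hyperlink{H1}{H$_1$}) follows from the Lipschitz regularity of $p,q,\mu$ together with the gap condition $q/p<1+1/N$. The clause ``$b\leq Ca$ when $\delta=1$'' is vacuous here because $\delta=e$.

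We define $\mathcal{A}(x,t,\xi)$ as the vector field inside the divergence and set $\mathcal{B}=f$. The bound on $f$ is (\hyperlink{H3}{H$_3$})(iii) with $a\equiv 1$, because $a^{\frac{N+1}{N}}=1$.

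For (\hyperlink{H3}{H$_3$})(i) we use the pointwise estimate $\frac{|\xi|}{q(x)(e+|\xi|)}\leq 1\leq\log(e+|\xi|)$. This gives
\begin{align*}
	|\mathcal{A}(x,t,\xi)|\leq |\xi|^{p(x)-1}+2\mu(x)|\xi|^{q(x)-1}\log(e+|\xi|).
\end{align*}
For (\hyperlink{H3}{H$_3$})(ii), the added fraction term is nonnegative, so
\begin{align*}
	\mathcal{A}(x,t,\xi)\cdot\xi\geq|\xi|^{p(x)}+\mu(x)|\xi|^{q(x)}\log(e+|\xi|),
\end{align*}
and we may take $\alpha_2=1$ with any $\alpha_3>0$. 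Theorem \ref{Th.H'} then yields the Hölder regularity.

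There is no genuine obstacle here. The only point that needs care is matching the parameters $\delta=e$, $\eta=1$, and checking that the structure bounds of (\hyperlink{H3}{H$_3$}) hold with $a\equiv 1$, $b=\mu$.
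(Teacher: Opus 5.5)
Your proposal is correct and matches the paper: the corollary is obtained by setting $\delta=e$ and $\eta=1$ in Corollary~\ref{corollary-general-log-double-phase}, which is itself a direct consequence of Theorem~\ref{Th.H'} with $a\equiv 1$ and $b=\mu$. Your explicit checks of (H$_0$), (H$_1$) and (H$_3$), including $\frac{|\xi|}{q(x)(e+|\xi|)}\leq 1\leq \log(e+|\xi|)$ for the growth bound on $\mathcal{A}$, fill in the verification that the paper leaves implicit.
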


We also point out the recent work of Missaoui--Bahrouni--Ounaies \cite{Missaoui-Bahrouni-Ounaies-2026}, who established H\"{o}lder continuity results for a broad class of nonlinear elliptic equations. However, the assumptions imposed in \cite{Missaoui-Bahrouni-Ounaies-2026} do not encompass the double phase framework. Finally, the paper by Crespo-Blanco--V\'elez-Santiago--Winkert \cite{Crespo-Blanco-Velez-Santiago-Winkert-2024} also deals with global H\"older regularity for elliptic and parabolic double phase problems, but with special right-hand sides. The approach in that paper is completely different from ours and it also does not cover the logarithmic double phase operator.

We emphasize that the contribution of the present paper is not simply the passage from constant to variable exponents. Rather, we develop boundedness and global H\"older regularity for general weak solutions within a generalized logarithmic double phase framework with variable exponents. Our setting also allows critical growth with respect to the associated Musielak-Orlicz Sobolev structure and covers both nondegenerate and degenerate logarithmic regimes. To the best of our knowledge, this combination is not covered by the existing regularity results discussed above.

The paper is organized as follows. In Section \ref{Section-2}, we collect some preliminary material on Musielak-Orlicz and Musielak-Orlicz Sobolev spaces and present a continuous embedding result that serves as a key tool in the subsequent analysis. Section \ref{Section-3} is concerned with a priori estimates for subcritical logarithmic double phase problems, while Section \ref{Section-4} addresses the boundedness of solutions under critical growth assumptions. Finally, in Section \ref{Holder}, we establish H\"{o}lder regularity of bounded weak solutions to problem \eqref{E-H} and of weak solutions to problem \eqref{D}.

%********************************************************************
\section{Preliminaries}\label{Section-2}
%********************************************************************

In this section, we present the properties of the underlying function spaces and tools that are needed in the sequel. Most of the results are taken from Arora--Crespo-Blanco--Winkert \cite{Arora-Crespo-Blanco-Winkert-2025-b,Arora-Crespo-Blanco-Winkert-2025}, Chlebicka--Gwiazda--\'{S}wierczewska-Gwiazda--Wr\'{o}blewska-Kami\'{n}ska \cite{Chlebicka-Gwiazda-Swierczewska-Gwiazda-Wroblewska-Kaminska-2021}, Diening--Harjulehto\allowbreak--H\"{a}st\"{o}--R\u{u}\v{z}i\v{c}ka \cite{Diening-Harjulehto-Hasto-Ruzicka-2011}, Fan \cite{Fan-2012}, Fan--Zhao \cite{Fan-Zhao-2001}, Harjulehto--H\"{a}st\"{o} \cite{Harjulehto-Hasto-2019}, Ho--Winkert \cite{Ho-Winkert-2023}, Kov{\'a}{\v{c}}ik--R{\'a}kosn{\'{\i}}k \cite{Kovacik-Rakosnik-1991}, Musielak \cite{Musielak-1983}, and Papageorgiou--Winkert \cite{Papageorgiou-Winkert-2024}.

Let $\Omega$ be a bounded domain in $\R^N$, $N\geq 2$, with Lipschitz boundary $\partial\Omega$. Denote by $M(\Omega)$  the set of all measurable functions $u\colon\Omega\to\R$.

\begin{definition}
	Let $\varphi \colon \Omega \times (0,+\infty) \to \R$. We say that
	\begin{enumerate}
		\item[\textnormal{(i)}]
			$\varphi$ is almost increasing in the second variable if there exists $C_a \geq 1$ such that $\varphi(x,s) \leq C_a \varphi(x,t)$ for all $0 < s < t$ and for a.a.\,$x \in \Omega$;
		\item[\textnormal{(ii)}]
			$\varphi$ is almost decreasing in the second variable if there exists $C_a \geq 1$ such that $C_a \varphi(x,s) \geq \varphi(x,t)$ for all $0 < s < t$ and for a.a.\,$x \in \Omega$.
	\end{enumerate}
\end{definition}

\begin{definition}
	Let $\varphi \colon \Omega \times (0,+\infty) \to \R$ and $p,q>0$. We say that $\varphi$ satisfies
	\begin{enumerate}[leftmargin=2cm]
		\item[\textnormal{(aInc)}$_p$]
			if $t^{-p}\varphi(x,t)$ is almost increasing in $t$ for a.a.\,$x\in\Omega$;
		\item[\textnormal{(aDec)}$_q$]
			if $t^{-q}\varphi(x,t)$ is almost decreasing in $t$ for a.a.\,$x\in\Omega$.
	\end{enumerate}
\end{definition}

Next, we give the definition of a generalized strong $\Phi$-function.

\begin{definition}
	A function $\varphi \colon \Omega \times [0,+\infty) \to [0,+\infty]$ is called a generalized  strong $\Phi$-function if it is measurable in the first variable and increasing in the second variable. Moreover, for a.a.\,$x \in \Omega$, the function $\varphi(x,\cdot)$ is continuous on $[0,\infty]$ and convex. In addition, it  satisfies
	\begin{align*}
		\varphi(x,0)=0,\quad \lim_{t\to 0^+} \varphi(x,t) = 0,\quad \text{and}\quad \lim_{t \to +\infty} \varphi(x,t) = +\infty.
	\end{align*}
	The set of all generalized strong $\Phi$-functions is denoted by $\Phi(\Omega)$.
\end{definition}

We use the adjective ``strong'' in accordance with the terminology of \cite{Arora-Crespo-Blanco-Winkert-2025-b,Arora-Crespo-Blanco-Winkert-2025,Harjulehto-Hasto-2019}, in order to distinguish this class from generalized weak $\Phi$-functions.

For each $\varphi\in \Phi(\Omega)$, we define
\begin{align*}
	L^\varphi(\Omega) = \{ u \in M(\Omega)\colon \varrho_{\varphi,\Omega} (\lambda u) < \infty \text{ for some } \lambda > 0 \},
\end{align*}
equipped with the associated Luxemburg norm
\begin{align*}
	\|u\|_{\varphi,\Omega} = \inf \left\{ \lambda > 0 \colon \varrho_{\varphi,\Omega} \left( \frac{u}{\lambda} \right)  \leq 1 \right\},
\end{align*}
where
\begin{align*}
	\varrho_{\varphi,\Omega} (u) = \int_\Omega \varphi(x,|u(x)|) \,\mathrm{d} x.
\end{align*}

It was shown by Harjulehto--H\"{a}st\"{o} \cite{Harjulehto-Hasto-2019} that $\left(L^\varphi(\Omega),\|\cdot\|_{\varphi,\Omega}\right)$ is a Banach space. We also define Musielak-Orlicz Sobolev spaces associated with $\varphi$ as follows. The space $W^{1,\varphi} (\Omega)$ is defined as
\begin{align*}
	W^{1,\varphi} (\Omega) = \left\{ u \in L^{\varphi}(\Omega) \colon |\nabla u| \in L^{\varphi}(\Omega) \right\},
\end{align*}
equipped with the Luxemburg norm
\begin{align*}
	\|u\|_{1,\varphi,\Omega} = \inf \left\{ \lambda > 0 \colon  \varrho_{1,\varphi,\Omega} \left( \frac{u}{\lambda} \right)  \leq 1 \right\},
\end{align*}
where
\begin{align*}
	\varrho_{1,\varphi,\Omega} (u) = \int_\Omega \left[\varphi(x,|u(x)|)+\varphi(x,|\nabla u(x)|)\right] \,\mathrm{d} x.
\end{align*}
We will also work with the space
\begin{align*}
	W^{1,\varphi}_0 (\Omega) = \overline{C_0^\infty (\Omega)}^{\|\cdot\|_{1,\varphi,\Omega}},
\end{align*}
where $C_0^\infty (\Omega)$ denotes the space of infinitely differentiable functions with compact support in $\Omega$.

These spaces $W^{1,\varphi} (\Omega)$ and $W^{1,\varphi}_0 (\Omega)$ are indeed Sobolev spaces, see Harjulehto--H\"{a}st\"{o} \cite{Harjulehto-Hasto-2019} for a proof.

\begin{proposition}%\label{P.MOS-S}
	Let $\varphi\in \Phi(\Omega)$ be such that $L^{\varphi}(\Omega) \subseteq L^1_{\operatorname{loc}} (\Omega)$.  Then both spaces $W^{1,\varphi} (\Omega)$ and $W_0^{1,\varphi} (\Omega)$ are Banach spaces. If $\varphi$ satisfies \textnormal{(aDec)$_q$} for some $q > 1$, then they are separable. If $\varphi$ satisfies \textnormal{(aInc)$_p$} and \textnormal{(aDec)$_q$} for some $p$, $q > 1$, then they possess an equivalent uniformly convex norm and  hence are reflexive.
\end{proposition}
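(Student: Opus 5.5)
The argument follows the standard route for Sobolev spaces (as in Harjulehto--H\"{a}st\"{o} \cite{Harjulehto-Hasto-2019}), transplanted to the Musielak--Orlicz setting. First, completeness of $W^{1,\varphi}(\Omega)$. The assumption $L^{\varphi}(\Omega)\subseteq L^1_{\operatorname{loc}}(\Omega)$ gives that convergence in $L^\varphi(\Omega)$ implies convergence in $L^1(K)$ for every compact $K\subset\Omega$. I would obtain this from the closed graph theorem, or directly from a norm inequality $\|u\|_{L^1(K)}\le C_K\|u\|_{\varphi,\Omega}$ coming from the Banach function space structure. Take a Cauchy sequence $(u_n)$ in $W^{1,\varphi}(\Omega)$. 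Then $(u_n)$ and $(\partial_i u_n)$ are Cauchy in the Banach space $L^\varphi(\Omega)$, so they converge to limits $u$ and $g_i$ there. For $\psi\in C_0^\infty(\Omega)$, local $L^1$ convergence lets us pass to the limit in $\int u_n\partial_i\psi\,\mathrm{d}x=-\int \partial_iu_n\,\psi\,\mathrm{d}x$. Hence $g_i=\partial_i u$ weakly and $u\in W^{1,\varphi}(\Omega)$. Finally, since the Luxemburg norm $\|\cdot\|_{1,\varphi,\Omega}$ is equivalent to $\|u\|_{\varphi,\Omega}+\||\nabla u|\|_{\varphi,\Omega}$, we get $u_n\to u$ in $W^{1,\varphi}(\Omega)$. $W_0^{1,\varphi}(\Omega)$ is then a closed subspace of a Banach space, hence Banach.

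Second, separability under (aDec)$_q$. The map $u\mapsto(u,\partial_1u,\dots,\partial_Nu)$ is an isometric isomorphism, up to equivalent norms, onto a closed subspace of $L^\varphi(\Omega)^{N+1}$. It therefore suffices to show that $L^\varphi(\Omega)$ is separable. Under (aDec)$_q$, $\varphi$ satisfies the $\Delta_2$ condition, so $\varrho_\varphi(u)<\infty$ for every $u\in L^\varphi$. Moreover, modular convergence and norm convergence coincide, and dominated convergence in the modular shows that bounded functions with compact support are dense. From there, simple functions with rational values on finite unions of rational cubes are dense, again by dominated convergence applied to $\varphi(x,|u-s_k|)$. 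This countable dense set gives separability, and subspaces of separable metric spaces are separable.

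Third, uniform convexity under (aInc)$_p$ and (aDec)$_q$ with $p,q>1$. This is the main obstacle, because $\varphi$ is only almost convex in the required power sense. The plan is to replace $\varphi$ by an equivalent $\Phi$-function $\tilde\varphi(x,t)\simeq\varphi(x,t)$ such that $\tilde\varphi(x,t^{1/p_0})$ is convex for some $p_0>1$, while $\tilde\varphi$ still satisfies (aDec) (the standard regularization in \cite{Harjulehto-Hasto-2019}). For such functions, a Clarkson-type modular inequality $\tilde\varphi\big(x,\tfrac{|s+t|}{2}\big)\le(1-\varepsilon)\tfrac{\tilde\varphi(x,s)+\tilde\varphi(x,t)}{2}$ holds whenever $|s-t|>\eta\max\{s,t\}$. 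Combined with $\Delta_2$, it transfers to uniform convexity of the Luxemburg norm of $L^{\tilde\varphi}$, which is equivalent to the norm of $L^\varphi$. The product $L^{\tilde\varphi}(\Omega)^{N+1}$ with an $\ell^2$-type combination of the norms is then uniformly convex. Restricting to the closed subspace gives an equivalent uniformly convex norm on $W^{1,\varphi}(\Omega)$ and on $W_0^{1,\varphi}(\Omega)$, and the Milman--Pettis theorem yields reflexivity.
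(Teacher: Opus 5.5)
Your overall route (componentwise limits for completeness, transfer through $u\mapsto(u,\partial_1u,\dots,\partial_Nu)$, regularization to $\tilde\varphi$ with $\tilde\varphi(x,t^{1/p})$ convex, $\ell^2$-product, Milman--Pettis) is the standard one from Harjulehto--H\"{a}st\"{o}. That is all the paper relies on, since it states the proposition without proof. The completeness and the uniform convexity/reflexivity parts are fine.

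The separability step, however, fails as written under the stated hypotheses. Your countable family of rational step functions on rational cubes need not lie in $L^\varphi(\Omega)$. Both that membership and your dominated convergence argument (with dominating function $\varphi(\cdot,c)\chi_K$) require $\varphi(\cdot,t)\in L^1_{\operatorname{loc}}(\Omega)$. This is not assumed: $L^\varphi(\Omega)\subseteq L^1_{\operatorname{loc}}(\Omega)$ bounds $L^\varphi(\Omega)$ from above, not from below.

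For example, take $\varphi(x,t)=w(x)t^2$ with $w=1+\sum_k 2^{-k}|x-r_k|^{-N}$ and $\{r_k\}$ dense in $\Omega$. Then:
\begin{itemize}
\item $w<\infty$ a.e.\ by Borel--Cantelli, and $\varphi\in\Phi(\Omega)$ satisfies (aInc)$_2$ and (aDec)$_2$;
\item $L^\varphi(\Omega)\subseteq L^2(\Omega)\subseteq L^1_{\operatorname{loc}}(\Omega)$, and $L^\varphi(\Omega)=L^2(\Omega;w\,\mathrm{d}x)$ is separable;
\item but $\int_{Q\cap\Omega}w\,\mathrm{d}x=\infty$ for every open cube $Q$ meeting $\Omega$.
\end{itemize}
So no nonzero element $s$ of your family belongs to $L^\varphi(\Omega)$, and $\|u-s\|_{\varphi}=\infty$ for every $u\in L^\varphi(\Omega)$. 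Your dense set is not dense at all.

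The repair is short.
\begin{itemize}
\item Since $\varphi(x,t)\to0$ as $t\to0^+$, (aDec)$_q$ forces $\varphi(x,t)<\infty$ for all $t$. Hence $E_j:=\{x\in\Omega\colon\varphi(x,j)\le j\}$ increases to $\Omega$ up to a null set.
\item For $u\in L^\varphi(\Omega)$, dominated convergence (using $\Delta_2$) gives $u_j:=u\chi_{E_j\cap\{|u|\le j\}}\to u$ in norm.
\item Approximate $u_j$ in norm by $s\chi_{E_j}$, where $s$ is a rational step function on rational cubes with $|s|\le j$ and $s\to u_j$ a.e. Now the dominating function $\varphi(x,2\lambda j)\chi_{E_j}\le C_a\max\{1,(2\lambda)^q\}\,j$ is integrable on the bounded set $\Omega$.
\item The family $\{s\chi_{E_j}\}$ is countable and lies in $L^\varphi(\Omega)$.
\end{itemize}

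Your argument does go through verbatim when $\varphi(\cdot,t)\in L^1_{\operatorname{loc}}(\Omega)$. Under (aDec) this is equivalent to $C_0^\infty(\Omega)\subseteq W^{1,\varphi}(\Omega)$, which the paper's definition of $W^{1,\varphi}_0(\Omega)$ as the closure of $C_0^\infty(\Omega)$ tacitly presupposes. It also holds for $\mathcal{S}$ itself, since $a,b\in L^1(\Omega)$ gives $\mathcal{S}(\cdot,t)\in L^1(\Omega)$.
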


Throughout the paper, we write $\varrho_{\varphi}(\cdot)$, $\varrho_{1,\varphi}(\cdot)$, $\|\cdot\|_{\varphi}$, and $\|\cdot\|_{1,\varphi}$ instead of $\varrho_{\varphi,\Omega}(\cdot)$, $\varrho_{1,\varphi,\Omega}(\cdot)$, $\|\cdot\|_{\varphi,\Omega}$ and $\|\cdot\|_{1,\varphi,\Omega}$, respectively. Furthermore, for $r \in C(\overline{\Omega})$, we set $r^- = \min_{x \in \overline{\Omega}} r(x)$, $r^+ = \max_{x \in \overline{\Omega}} r(x)$, and define
\begin{align*}
	C_+ (\Omega) = \{ r \in C(\overline{\Omega})\colon  1 < r^- \}.
\end{align*}

The next proposition establishes the relationship between the modulars and the corresponding norms, see Arora--Crespo-Blanco--Winkert \cite{Arora-Crespo-Blanco-Winkert-2025}.

\begin{proposition}\label{P.nor-mod-A}
	Let $\varphi\in \Phi(\Omega)$ satisfy \textnormal{(aInc)}$_p$ and \textnormal{(aDec)}$_q$, with $1 \leq p \leq q < \infty$. Then,
	\begin{align*}
		\frac{1}{C} \min \left\lbrace \|u\|_{\varphi}^p ,  \|u\|_{\varphi}^q \right\rbrace
		\leq \varrho_{\varphi} (u)
		\leq C \max \left\lbrace \|u\|_{\varphi}^p ,  \|u\|_{\varphi}^q \right\rbrace, \quad \text{for all }u \in L^\varphi(\Omega)
	\end{align*}
	and
	\begin{align*}
		\frac{1}{C} \min \left\lbrace \|u\|_{1,\varphi}^p ,  \|u\|_{1,\varphi}^q \right\rbrace
		\leq \varrho_{1,\varphi} (u)
		\leq C \max \left\lbrace \|u\|_{1,\varphi}^p ,  \|u\|_{1,\varphi}^q \right\rbrace,\quad \text{for all }u \in W^{1,\varphi} (\Omega),
	\end{align*}
	where $C$ is the maximum of the constants appearing in \textnormal{(aInc)}$_p$ and \textnormal{(aDec)}$_q$.
\end{proposition}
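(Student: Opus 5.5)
The plan is the standard "unit ball" argument for Luxemburg norms, combined with the almost-monotonicity of $t^{-p}\varphi(x,t)$ and $t^{-q}\varphi(x,t)$. Let $C\ge 1$ denote the maximum of the constants $C_a$ from \textnormal{(aInc)}$_p$ and \textnormal{(aDec)}$_q$. Unwinding the definitions, for a.a.\ $x\in\Omega$, all $t>0$ and all $\lambda\ge1$ we have
\begin{align*}
	\lambda^{p}\varphi(x,t)\le C\varphi(x,\lambda t)\le C^2\lambda^{q}\varphi(x,t),
\end{align*}
and for $0<\lambda\le1$ the reversed pair
\begin{align*}
	C^{-1}\lambda^{q}\varphi(x,t)\le\varphi(x,\lambda t)\le C\lambda^{p}\varphi(x,t).
\end{align*}
Integrating these gives the scaling inequalities for $\varrho_\varphi(\lambda u)$ in terms of $\varrho_\varphi(u)$.

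The second ingredient is the behaviour of the modular at the norm. Since $\varphi(x,\cdot)$ is convex, continuous and vanishes at $0$, for $u\neq0$ with $\lambda_0=\|u\|_\varphi>0$ we have $\varrho_\varphi(u/\lambda_0)\le1$. This follows by monotone or Fatou convergence along $\lambda\downarrow\lambda_0$, using the continuity of $\varphi(x,\cdot)$. For $\lambda<\lambda_0$ we have $\varrho_\varphi(u/\lambda)>1$ by the definition of the infimum. To get a two-sided bound with a constant, I avoid needing $\varrho_\varphi(u/\lambda_0)=1$ exactly, since \textnormal{(aDec)} gives finiteness but one should be careful. Instead I take $\lambda=\lambda_0/(1+\epsilon)$, for which $\varrho_\varphi(u/\lambda)>1$, and then use the scaling to get $\varrho_\varphi(u/\lambda_0)\ge C^{-1}(1+\epsilon)^{-q}$. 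Letting $\epsilon\to0$ yields
\begin{align*}
	C^{-1}\le\varrho_\varphi(u/\|u\|_\varphi)\le1 .
\end{align*}

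Next write $u=\lambda_0 (u/\lambda_0)$ and apply the scaling inequalities with factor $\lambda_0$. If $\lambda_0\ge1$, then
\begin{align*}
	C^{-1}\lambda_0^p\varrho_\varphi(u/\lambda_0)\le\varrho_\varphi(u)\le C\lambda_0^q\varrho_\varphi(u/\lambda_0),
\end{align*}
so $C^{-2}\lambda_0^p\le\varrho_\varphi(u)\le C\lambda_0^q$. If $\lambda_0<1$, we get analogously $C^{-2}\lambda_0^q\le\varrho_\varphi(u)\le C\lambda_0^p$. In both cases the result is bounded between $\min$ and $\max$ of $\lambda_0^p,\lambda_0^q$. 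The constant comes out as $C^2$ rather than $C$. This is harmless, but to match the stated constant one can use the sharper direct form: \textnormal{(aInc)} applied between $t$ and $\lambda t$ gives $\varphi(x,\lambda t)\ge C^{-1}\lambda^p\varphi(x,t)$ with one constant. Tracking the constants carefully, one uses $\varrho_\varphi(u/\lambda_0)\le1$ for the upper bound and $\varrho_\varphi(u/\lambda)>1$ for $\lambda<\lambda_0$ for the lower bound. For the lower bound with $\lambda_0\ge 1$: $1<\varrho_\varphi(u/\lambda)\le C(\lambda_0/\lambda)^{\ldots}$, and so on. Each application uses a single almost-monotonicity constant, giving exactly $C$. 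The case $u=0$ is trivial.

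The Sobolev statement follows verbatim by replacing $\varrho_\varphi$ with $\varrho_{1,\varphi}$. The pointwise scaling inequalities apply to both $|u|$ and $|\nabla u|$ and sum, and the definition of $\|\cdot\|_{1,\varphi}$ is the Luxemburg norm of this modular. The only mildly delicate step is the limiting argument at $\lambda=\|u\|_\varphi$, namely lower semicontinuity of the modular in $\lambda$. This is handled by Fatou's lemma and the continuity of $\varphi(x,\cdot)$.
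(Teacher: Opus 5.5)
The paper does not prove this proposition; it quotes it without proof from Arora--Crespo-Blanco--Winkert. Your unit-ball-plus-scaling argument is the standard proof and it is correct. That covers the pointwise scaling inequalities, the monotone-convergence argument for $\varrho_\varphi(u/\|u\|_\varphi)\le 1$, the bound $\varrho_\varphi(u/\lambda)>1$ for $\lambda<\|u\|_\varphi$, and the verbatim transfer to $\varrho_{1,\varphi}$.

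The one step to tighten is the constant, since the statement claims it is exactly $C$. Your first route loses a factor in the lower bound because it passes through $\varrho_\varphi(u/\lambda_0)\ge C^{-1}$. Your sketched fragment $1<\varrho_\varphi(u/\lambda)\le C(\lambda_0/\lambda)^{\ldots}$ compares $u/\lambda$ with $u/\lambda_0$, which reintroduces the same loss. Compare $u/\lambda$ with $u$ itself instead. If $\lambda_0=\|u\|_\varphi>1$ and $1\le\lambda<\lambda_0$, then \textnormal{(aInc)}$_p$ gives
\[
1<\varrho_\varphi(u/\lambda)\le C\lambda^{-p}\varrho_\varphi(u).
\]
If $\lambda<\lambda_0\le 1$, then \textnormal{(aDec)}$_q$ gives $1<\varrho_\varphi(u/\lambda)\le C\lambda^{-q}\varrho_\varphi(u)$. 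Letting $\lambda\uparrow\lambda_0$ yields $\varrho_\varphi(u)\ge C^{-1}\min\{\lambda_0^p,\lambda_0^q\}$ with a single constant. Your upper bound already uses only one constant.

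Alternatively, your caution about $\varrho_\varphi(u/\lambda_0)=1$ is unnecessary. For $u\in L^\varphi(\Omega)$, \textnormal{(aDec)}$_q$ makes $\varrho_\varphi(u/\lambda)$ finite for every $\lambda>0$. Dominated convergence and continuity of $\varphi(x,\cdot)$ then make $\lambda\mapsto\varrho_\varphi(u/\lambda)$ continuous, which forces $\varrho_\varphi(u/\|u\|_\varphi)=1$. With this, your first route gives the constant $C$ directly. The same remarks apply to $\varrho_{1,\varphi}$.
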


Define the function $\mathcal{S}\colon \Omega\times [0,\infty)\to [0,\infty)$ as in \eqref{S}. We have the following result, see Arora--Crespo-Blanco--Winkert \cite{Arora-Crespo-Blanco-Winkert-2025-b}.

\begin{proposition}%\label{aCon}
	Assume that \textnormal{(\hyperlink{H0}{H$_0$})}  holds. Then $\mathcal{S}\in \Phi(\Omega)$, $L^{\mathcal{S}}(\Omega) \subseteq L^1_{\operatorname{loc}} (\Omega)$, and $\mathcal{S}$ satisfies \textnormal{(aInc)$_{p^-}$} and \textnormal{(aDec)$_{q^++1}$}. Consequently, the spaces $L^{\mathcal{S}}(\Omega)$, $W^{1, \mathcal{S}}(\Omega)$ and $W_0^{1, \mathcal{S}}(\Omega)$ are reflexive Banach spaces.
\end{proposition}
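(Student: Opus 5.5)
We verify the defining properties of a generalized strong $\Phi$-function directly from the explicit formula \eqref{S}, then check \textnormal{(aInc)}$_{p^-}$ and \textnormal{(aDec)}$_{q^++1}$ by monotonicity of explicit quotients, then prove $L^{\mathcal{S}}(\Omega)\subseteq L^1_{\operatorname{loc}}(\Omega)$ from a linear lower bound on $\mathcal{S}$. Reflexivity then follows from the earlier Proposition on Musielak-Orlicz Sobolev spaces, applied with $p^->1$ and $q^++1>1$. Throughout, set $L(t):=\log(\delta+\eta t)$. Since $\delta\geq 1$ and $\eta\geq 0$, we have $L(t)\geq 0$, and $L$ is nondecreasing and concave.

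\emph{$\Phi$-function properties.} Measurability in $x$ and continuity in $t$ are clear, as are $\mathcal{S}(x,0)=0$ and $\lim_{t\to0^+}\mathcal{S}(x,t)=0$. Monotonicity holds because every summand is a product of nonnegative nondecreasing functions. For $t\to\infty$, we have $\mathcal{S}(x,t)\to\infty$ because $a(x)+b(x)\geq d>0$ and $L(t)\geq \log(\delta+\eta)>0$ for $t\geq 1$. For convexity, $t^{p(x)}$ is convex. For $g(t)=t^{q}L(t)$ with $q=q(x)>1$ we compute
\begin{align*}
	g''(t)=q(q-1)t^{q-2}L(t)+2q\,t^{q-1}\frac{\eta}{\delta+\eta t}-t^{q}\frac{\eta^2}{(\delta+\eta t)^2}.
\end{align*}
The last term is bounded by $t^{q-1}\frac{\eta}{\delta+\eta t}\cdot\frac{\eta t}{\delta+\eta t}\leq t^{q-1}\frac{\eta}{\delta+\eta t}$, and $2q>1$, so $g''\geq 0$. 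Since $a,b\geq0$, the function $\mathcal{S}(x,\cdot)$ is convex.

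\emph{(aInc) and (aDec).} The quotient
\begin{align*}
	t^{-p^-}\mathcal{S}(x,t)=a(x)t^{p(x)-p^-}+b(x)t^{q(x)-p^-}L(t)
\end{align*}
is nondecreasing, so (aInc)$_{p^-}$ holds with constant $1$. For (aDec)$_{q^++1}$, we write
\begin{align*}
	t^{-q^+-1}\mathcal{S}(x,t)=a(x)t^{p(x)-q^+-1}+b(x)t^{q(x)-q^+}\cdot\frac{L(t)}{t}.
\end{align*}
The powers $t^{p(x)-q^+-1}$ and $t^{q(x)-q^+}$ have nonpositive exponents, so they are nonincreasing. It remains to show that $L(t)/t$ is nonincreasing. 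Its derivative is
\begin{align*}
	\frac{\frac{\eta t}{\delta+\eta t}-L(t)}{t^2},
\end{align*}
and this is $\leq 0$ because
\begin{align*}
	L(t)\geq\log(1+\eta t)\geq\frac{\eta t}{1+\eta t}\geq\frac{\eta t}{\delta+\eta t},
\end{align*}
using $\delta\geq1$ and $\log(1+y)\geq y/(1+y)$. Hence (aDec)$_{q^++1}$ holds with constant $1$.

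\emph{Local integrability.} For $t\geq 1$ we have $t^{q(x)}\geq t^{p(x)}\geq t$. Combined with $L(t)\geq\log(\delta+\eta)$ and $a+b\geq d$, this gives
\begin{align*}
	\mathcal{S}(x,t)\geq c_0\,t-c_0\quad\text{for all }t\geq0,\qquad c_0:=d\min\{1,\log(\delta+\eta)\}>0.
\end{align*}
Hence if $\varrho_{\mathcal{S}}(\lambda u)<\infty$, then $\lambda u\in L^1(\Omega)$ because $\Omega$ is bounded. This gives $L^{\mathcal{S}}(\Omega)\subseteq L^1(\Omega)\subseteq L^1_{\operatorname{loc}}(\Omega)$.

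The conclusion about Banach spaces and reflexivity follows from the earlier Proposition on $W^{1,\varphi}(\Omega)$ and $W^{1,\varphi}_0(\Omega)$; reflexivity of $L^{\mathcal{S}}(\Omega)$ itself follows in the same way. The only genuinely delicate step is convexity of $t^{q}\log(\delta+\eta t)$, which is handled by the second-derivative estimate above. Note that this estimate uses $q>1$ only through $2q>1$.
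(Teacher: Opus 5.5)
Your proof is correct and complete. The paper gives no argument of its own here; it takes the proposition from Arora--Crespo-Blanco--Winkert. Your direct verification is therefore a self-contained substitute for that citation. It consists of:
\begin{itemize}
\item the second-derivative computation for $t^{q}\log(\delta+\eta t)$;
\item monotonicity of $t^{-p^-}\mathcal{S}(x,t)$, and of $\log(\delta+\eta t)/t$ via $\log(1+y)\geq y/(1+y)$;
\item the linear lower bound $\mathcal{S}(x,t)\geq c_0t-c_0$.
\end{itemize}
The convexity computation is the same one the paper carries out for the frozen functions $\mathcal{S}_i$ in Section~\ref{Holder}.

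Two small corrections:
\begin{itemize}
\item The quoted proposition only covers $W^{1,\varphi}(\Omega)$ and $W^{1,\varphi}_0(\Omega)$. For reflexivity of $L^{\mathcal{S}}(\Omega)$ you should invoke the corresponding Harjulehto--H\"{a}st\"{o} result for $L^\varphi(\Omega)$: (aInc)$_p$ and (aDec)$_q$ with $p,q>1$ give an equivalent uniformly convex norm.
\item Your closing remark is slightly off. The term $q(q-1)t^{q-2}\log(\delta+\eta t)$ in $g''$ also needs $q\geq 1$, so $q>1$ is not used only through $2q>1$.
\end{itemize}
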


Furthermore, the relations between the norms and modulars on $L^{\mathcal{S}}(\Omega)$ and $W^{1, \mathcal{S}}(\Omega)$ are stated in the following proposition. The result is taken from Arora--Crespo-Blanco--Winkert \cite{Arora-Crespo-Blanco-Winkert-2025-b}.

\begin{proposition}\label{P.nor-mod-L}
	Assume that \textnormal{(\hyperlink{H0}{H$_0$})} holds, and let $(X,\varrho_X,\|\cdot\|_X)$ be either $\left(L^{\mathcal{S}}(\Omega),\varrho_{\mathcal{S}},\|\cdot\|_{\mathcal{S}}\right)$ or $\left(W^{1, \mathcal{S}}(\Omega),\varrho_{1,\mathcal{S}},\|\cdot\|_{1, \mathcal{S}}\right)$.  Let $u \in X$, then the following assertions hold:
	\begin{enumerate}
		\item[\textnormal{(i)}]
			If $u \neq 0$, then $\|u\|_{X} = \eta$ if and only if $\varrho_{X} \left(\frac{u}{\eta}\right)=1$.
		\item[\textnormal{(ii)}]
			$\varrho_{X}(u) <1$ $($or $=1$ or $>1)$ $\Leftrightarrow \|u\|_{X}<1$ $($or $=1$ or $>1)$.
		\item[\textnormal{(iii)}]
			If $\|u\|_{X} < 1$, then $\|u\|^{q^++1}_{X} \leq \varrho_{X}(u) \leq \|u\|^{p^-}_{X}$.
		\item[\textnormal{(iv)}]
			If $\|u\|_{X} > 1$, then $\|u\|^{p^-}_{X} \leq \varrho_{X}(u) \leq  \|u\|^{q^++1}_{X}$.
		\item[\textnormal{(v)}]
			$\|u\|_{X} \to 0$ if and only if $\varrho_{X}(u) \to 0$.
		\item[\textnormal{(vi)}]
			$\|u\|_{X} \to \infty$ if and only if $\varrho_{X}(u) \to \infty$.
	\end{enumerate}
\end{proposition}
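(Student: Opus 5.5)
The plan is to derive everything from two-sided scaling inequalities for $\mathcal{S}$ with constant exactly $1$, and then run the standard Luxemburg-norm argument.

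\textbf{Step 1: exact scaling inequalities.} First I show that $\mathcal{S}$ satisfies (aInc)$_{p^-}$ and (aDec)$_{q^++1}$ with constant $1$, i.e.\ with genuine monotonicity. That $t\mapsto t^{-p^-}\mathcal{S}(x,t)$ is nondecreasing is immediate: $a(x)t^{p(x)-p^-}$ and $b(x)t^{q(x)-p^-}\log(\delta+\eta t)$ are nondecreasing, because $p(x),q(x)\ge p^-$ and $\delta\ge 1$. For the upper bound I write
\begin{align*}
t^{-(q^++1)}\mathcal{S}(x,t)=a(x)t^{p(x)-q^+-1}+b(x)t^{q(x)-q^+}\cdot\frac{\log(\delta+\eta t)}{t}.
\end{align*}
The first term and $t^{q(x)-q^+}$ are nonincreasing. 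Moreover $t\mapsto \log(\delta+\eta t)/t$ is nonincreasing, since its derivative has the sign of $\frac{\eta t}{\delta+\eta t}-\log(\delta+\eta t)$. This is $\le 0$ because $\delta\ge1$ gives
\begin{align*}
\log(\delta+\eta t)\ge \log(1+\eta t)\ge \frac{\eta t}{1+\eta t}\ge \frac{\eta t}{\delta+\eta t}.
\end{align*}
Consequently, for all $t\ge0$ and a.a.\ $x$,
\begin{align*}
\lambda^{q^++1}\mathcal{S}(x,t)\le \mathcal{S}(x,\lambda t)\le \lambda^{p^-}\mathcal{S}(x,t)\ \ (0<\lambda\le1),\qquad \lambda^{p^-}\mathcal{S}(x,t)\le \mathcal{S}(x,\lambda t)\le \lambda^{q^++1}\mathcal{S}(x,t)\ \ (\lambda\ge1).
\end{align*}
Summing over $|u|$ and $|\nabla u|$, the same inequalities hold for $\varrho_{\mathcal{S}}$ and for $\varrho_{1,\mathcal{S}}$.

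\textbf{Step 2: the map $\lambda\mapsto\varrho_X(u/\lambda)$.} Let $u\in X$, $u\neq0$. By Step 1, $\varrho_X(u/\lambda)<\infty$ for every $\lambda>0$, since finiteness for one $\lambda$ propagates to all $\lambda$. I then show that $g(\lambda):=\varrho_X(u/\lambda)$ is continuous on $(0,\infty)$; this follows by dominated convergence, using the scaling bounds on compact $\lambda$-intervals and the continuity of $\mathcal{S}(x,\cdot)$. Next, $g$ is strictly decreasing. The reason is that $\mathcal{S}(x,\cdot)$ is strictly increasing for a.a.\ $x$, because $a(x)+b(x)\ge d>0$, and $u$ (or $\nabla u$) is nonzero on a set of positive measure. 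Finally, Step 1 gives $g(\lambda)\to\infty$ as $\lambda\to0^+$ and $g(\lambda)\to0$ as $\lambda\to\infty$. Hence there is a unique $\lambda_0$ with $g(\lambda_0)=1$, and it equals the infimum defining $\|u\|_X$. This proves (i).

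\textbf{Step 3: the remaining assertions.} For (ii), apply (i) with $\eta=\|u\|_X$ and compare $\varrho_X(u)=g(1)$ with $g(\|u\|_X)=1$ using the strict monotonicity of $g$; the case $u=0$ is trivial. For (iii) and (iv), write $u=\|u\|_X\cdot\frac{u}{\|u\|_X}$, use $\varrho_X(u/\|u\|_X)=1$, and apply the scaling inequalities of Step 1 with $\lambda=\|u\|_X<1$ or $\lambda=\|u\|_X>1$ respectively. Assertions (v) and (vi) then follow directly from (iii) and (iv) together with (ii).

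The only delicate point is Step 1, specifically the monotonicity of $\log(\delta+\eta t)/t$ in the degenerate regime $\delta=1$. This is exactly where the hypothesis $\delta\ge1$ enters, and the remaining steps are routine.
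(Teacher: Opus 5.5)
Your proof is correct. The paper gives no argument of its own here and simply cites Arora--Crespo-Blanco--Winkert; your route is the standard one behind that citation: (aInc)$_{p^-}$ and (aDec)$_{q^++1}$ with constant exactly $1$ (your check that $t\mapsto\log(\delta+\eta t)/t$ is nonincreasing for $\delta\geq 1$ is what makes the exponents in (iii)--(iv) constant-free), followed by continuity, strict monotonicity and the limits of $\lambda\mapsto\varrho_X(u/\lambda)$.
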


From the definition of the norm $\|\cdot\|_{1,\mathcal{S}}$ and Proposition~\ref{P.nor-mod-L}(i), we easily obtain
\begin{align*}%\label{EN}
	2^{-1}\|u\|_{1,\mathcal{S}}\leq \|u\|_{\mathcal{S}}+\|\nabla u\|_{\mathcal{S}}\leq 2^{\frac{1}{p^-}}\|u\|_{1,\mathcal{S}},\quad \text{for all } u\in W^{1,\mathcal{S}} (\Omega).
\end{align*}
Hence, $W^{1,\mathcal{S}}(\Omega)$ can be equipped with the equivalent norm $\|u\|_\mathcal{S}+\|\nabla u\|_\mathcal{S}$.

For the estimates in the following sections, we require a sharper estimate of the modular in terms of the norm. To this end, we first establish the following result.

\begin{proposition}\label{P.aCon}
	Assume that \textnormal{(\hyperlink{H0}{H$_0$})}  holds. If $\delta=1$, additionally assume that $a\in L^\infty(\Omega)$ and $b(x)\leq Ca(x)$ for a.a.\,$x\in\Omega$ with some constant $C>0$.  Let $\Psi$ be defined by \eqref{Psi}, where $r,s\in C_+ (\Omega)$. Then, $\Psi$ satisfies \textnormal{(aInc)$_{m^-}$} and \textnormal{(aDec)$_{n^++\eps}$} for every $\eps>0$, where $m(x):=\min\{r(x),s(x)\}$ and  $n(x):=\max\{r(x),s(x)\}$ for all $x\in\overline{\Omega}$. In particular, it holds that
	 \begin{align*}
	 	\frac{1}{C_\eps} \min \left\lbrace \|u\|_{\Psi}^{m^-} ,  \|u\|_{\Psi}^{n^++\eps} \right\rbrace
	 	\leq \varrho_{\Psi} (u)
	 	\leq C_\eps \max \left\lbrace \|u\|_{\Psi}^{m^-} ,  \|u\|_{\Psi}^{n^++\eps} \right\rbrace \quad \text{for all }u \in L^\Psi(\Omega),
	\end{align*}
	where $C_\eps$ is the constant appearing in \textnormal{(aDec)}$_{n^++\eps}$.
\end{proposition}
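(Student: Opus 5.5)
Since $\Psi$ is the sum of two nonnegative terms, it suffices to check (aInc)$_{m^-}$ and (aDec)$_{n^++\eps}$ for each summand with constants independent of $x$; summing then gives the same properties for $\Psi$ with the larger of the two constants. The modular-norm inequality follows directly from Proposition~\ref{P.nor-mod-A} with $p=m^-$ and $q=n^++\eps$. That proposition needs $\Psi\in\Phi(\Omega)$. Measurability, $\Psi(x,0)=0$, continuity, monotonicity and growth to infinity are immediate. Convexity can be checked from the fact that both $t\mapsto t^{r(x)}$ and $t\mapsto (\log(\delta+\eta t))^{s(x)/q(x)}t^{s(x)}$ are products of nonnegative increasing functions with a convex power $t^{s(x)}$, $s(x)>1$. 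If convexity turned out to be delicate, one could replace $\Psi$ by an equivalent convex $\Phi$-function, since (aInc)$_{m^-}$ with $m^->1$ guarantees this up to constants.

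\emph{First summand.} We have $t^{-m^-}(a(x)^{1/p(x)}t)^{r(x)}=a(x)^{r(x)/p(x)}t^{r(x)-m^-}$, which is increasing because $r(x)\ge m(x)\ge m^-$. Similarly, $t^{-(n^++\eps)}(\cdot)$ is decreasing because $r(x)\le n^+$. So this summand satisfies both conditions with constant $1$.

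\emph{Second summand.} Set $g_x(t)=(b(x)\log(\delta+\eta t))^{s(x)/q(x)}t^{s(x)}$. For (aInc)$_{m^-}$ we note that $\log(\delta+\eta t)$ is nondecreasing, so $t^{-m^-}g_x(t)$ is a product of nondecreasing factors and is increasing. For (aDec)$_{n^++\eps}$ we must show
\begin{align*}
	\frac{(\log(\delta+\eta t))^{s(x)/q(x)}}{(\log(\delta+\eta \tau))^{s(x)/q(x)}}\le C_\eps\Big(\frac{t}{\tau}\Big)^{n^++\eps-s(x)}\quad\text{for }0<\tau<t,
\end{align*}
with $C_\eps$ uniform in $x$. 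Since $n^++\eps-s(x)\ge\eps$, and $s/q\le s^+/q^-$, it suffices to show that $\sigma\mapsto(\log(\delta+\eta\sigma))\sigma^{-\gamma}$ is almost decreasing for $\gamma=\eps q^-/s^+$.

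\emph{Case $\delta>1$.} Here $\log(\delta+\eta\sigma)\ge\log\delta>0$. The function $h(\sigma)=\log(\delta+\eta\sigma)\sigma^{-\gamma}$ satisfies $h'\le0$ once $\eta\sigma/(\delta+\eta\sigma)\le\gamma\log(\delta+\eta\sigma)$, which holds for $\sigma\ge\sigma_0$ because the left side is at most $1$ and the right side grows. On $(0,\sigma_0]$, the ratio of the logarithms is bounded by $\log(\delta+\eta\sigma_0)/\log\delta$. Combining the two regimes gives the almost-decrease constant. The case $\eta=0$ is trivial.

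\emph{Case $\delta=1$.} This is the main obstacle. Now $\log(1+\eta\tau)\sim\eta\tau$ as $\tau\to0$, so the log ratio behaves like $t/\tau$ for small arguments. This is not controlled by $(t/\tau)^{\eps}$, so the second summand alone fails (aDec) near $0$. This is where the extra hypotheses $a\in L^\infty$ and $b\le Ca$ are used.

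My plan is to avoid proving (aDec) for each summand separately and instead work with $\Psi$ directly. For small $t$ (say $\eta t\le1$), we have
\begin{align*}
	g_x(t)\le (Ca(x)\eta t)^{s/q}t^{s}\le C'\, a(x)^{s/q}\, t^{s+s/q},
\end{align*}
and the aim is to dominate this by a multiple of the first summand $a(x)^{r/p}t^{r}$. This works when the powers of $a$ and of $t$ are compatible, using $a\le\|a\|_\infty$ and $t\le1$, with $b\le Ca$ guaranteeing that $g_x$ vanishes where $a$ does. Then (aDec) for $\Psi$ at small arguments reduces to (aDec) for the first summand plus a comparison. For large arguments, the argument from the case $\delta>1$ applies, since there $\log(1+\eta\sigma)\ge\log 2$. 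I expect the exponent bookkeeping in this comparison, uniformly in $x$, to be the delicate part. If it fails, I would try to prove (aDec) of the second summand on $[1/\eta,\infty)$ and glue it with the first summand via the two-regime almost-monotonicity argument.
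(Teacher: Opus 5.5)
\textbf{Overall.} Your treatment of (aInc)$_{m^-}$, of the first summand, and of the case $\delta>1$ is correct. It is essentially the paper's argument: the reduction to almost-decrease of $t\mapsto\log(\delta+\eta t)\,t^{-\sigma}$ for a small $\sigma>0$.

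\textbf{The gap: the case $\delta=1$.} You leave this case as a plan, and the plan does not close. Two things go wrong.
\begin{enumerate}
\item[(a)] You do not use the extra hypotheses in the way that matters. From $d\le a+b\le(1+C)a$ you get $a\ge d/(1+C)$, hence $a^{r/p}\ge a_0>0$ and $b\le C\|a\|_{L^\infty(\Omega)}$ uniformly. So $a$ never vanishes, and your worry about matching powers of $a$ disappears.
\item[(b)] More seriously, the domination $g_x(t)\lesssim t^{s(x)+s(x)/q(x)}\lesssim a(x)^{r(x)/p(x)}t^{r(x)}$ near $t=0$ requires $s(x)+s(x)/q(x)\ge r(x)$. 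Nothing in the hypotheses gives this. For example, $q\equiv2$, $s\equiv2.1$, $r\equiv5$ gives $s+s/q=3.15<r$, and this choice is even admissible in Theorem~\ref{D.a-priori} with $N=3$, $p\equiv2$. Where the condition fails, the second summand dominates near $0$. Your fallback (treat $[1/\eta,\infty)$ and glue) only concerns large $t$, so the small-$t$ regime stays uncovered.
\end{enumerate}

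\textbf{The missing idea.} Your reduction replaced the available exponent $n^++\eps-s(x)$ by its lower bound $\eps$. That throws away exactly the slack you need. Split pointwise in $x$:
\begin{enumerate}
\item[(i)] If $s(x)+s(x)/q(x)\le n^++\eps$, write
$t^{-(n^++\eps)}g_x(t)=b(x)^{s(x)/q(x)}\bigl(\log(1+\eta t)/t\bigr)^{s(x)/q(x)}t^{s(x)+s(x)/q(x)-n^+-\eps}$.
This is nonincreasing on all of $(0,\infty)$, because $\log(1+\eta t)/t$ is decreasing. No domination is needed.
\item[(ii)] If $s(x)+s(x)/q(x)>n^++\eps$, then $s+s/q>r+\eps$. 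For $t\le1$, using $\log(1+\eta t)\le\eta t$,
$g_x(t)\le K t^{r(x)}\le (K/a_0)\,a(x)^{r(x)/p(x)}t^{r(x)}$, with $K=\max\{1,(C\|a\|_{L^\infty(\Omega)}\eta)^{(s/q)^+}\}$.
On $[1,\infty)$ your $\delta>1$ argument applies, since there $\log(1+\eta t)\ge\log(1+\eta)>0$. Chaining through $t=1$ gives a constant independent of $x$.
\end{enumerate}

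\textbf{Comparison with the paper.} The paper reaches the same conclusion differently. It shows $t\partial_t\Psi-(n^++\eps)\Psi\le0$ on $[0,t_1]\cup[t_2,\infty)$ using $a^{r/p}\ge a_0$, and bounds $\max f/\min f$ on $[t_1,t_2]$ uniformly. Near $t=0$, that sign also rests on the same dichotomy once the coefficient $s(x)-n^+$ is kept rather than discarded.

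\textbf{Minor point.} ``Product of increasing functions with a convex power'' is not a valid convexity criterion. Convexity of $\Psi(x,\cdot)$ can, however, be checked directly.
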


\begin{proof}
	Let \textnormal{(\hyperlink{H0}{H$_0$})} hold. Obviously, $\Psi$ satisfies \textnormal{(aInc)$_{m^-}$} with $C_a=1$. We next verify that $\Psi$ satisfies \textnormal{(aDec)$_{n^++\eps}$} for any given $\eps>0$, that is, we show that  there exists a constant $C_\varepsilon\geq 1$ such that
	\begin{equation}\label{f}
		f(x,t)\leq C_\eps f(x,\tau)\quad \text{for a.a.\,}x\in\Omega\text{ and for all } 0<\tau<t<\infty,
	\end{equation}
	where
	\begin{align*}
		f(x,t):=t^{-(n^++\eps)}\Psi(x,t).
	\end{align*}
	We distinguish between the following two cases.

	(i) \underline{Case $\delta>1$}. For any $\sigma>0$, define
	\begin{align*}
		g_\sigma(t):=\frac{\log(\delta+\eta t)}{t^\sigma}\quad\text{for } t\in (0,\infty).
	\end{align*}
	We first show that there exists a constant $C_\sigma\geq 1$ such that
	\begin{equation}\label{g}
		g_\sigma(t)\leq C_\sigma g_\sigma(\tau)\quad \text{for all } 0<\tau<t<\infty.
	\end{equation}
	Indeed,
	\begin{equation*}
		g_\sigma'(t)=\frac{t^{-\sigma-1}}{\delta+\eta t}\left[\eta t(1-\sigma\log(\delta+\eta t))-\sigma\delta\log(\delta+\eta t)\right].
	\end{equation*}
	Thus, $g_\sigma'(t)<0$ for $0<t\ll 1$ or $1\ll t$. From this and the continuity of $g_\sigma'$ on $(0,\infty)$, we find $t_{1,\sigma}$ and $t_{2,\sigma}$ with $0<t_{1,\sigma}<t_{2,\sigma}<\infty$ such that $g_\sigma'(t)<0$ for all $t\in [0,t_{1,\sigma}]\cup [t_{2,\sigma},\infty).$ Then, by setting $C_\sigma=\frac{\max_{\xi\in [t_{1,\sigma}, t_{2,\sigma}]}g_\sigma(\xi)}{\min_{\xi\in [t_{1,\sigma}, t_{2,\sigma}]}g_\sigma(\xi)}$, we easily obtain \eqref{g}.

	Finally, to see \eqref{f}, by taking $\sigma$ with $0<\sigma<\left(\frac{\eps q}{s}\right)^-$ and rewriting
	\begin{equation*}
		f(x,t)=a(x)^\frac{r(x)}{p(x)} t^{r(x)-n^+-\eps}  + \left(b(x)g_\sigma(t)\right)^{\frac{s(x)}{q(x)}} t^{s(x)-n^++\sigma\frac{s(x)}{q(x)}-\eps},
	\end{equation*}
	we easily derive \eqref{f} from \eqref{g}.

	(ii) \underline{Case $\delta=1$, $a\in L^\infty(\Omega)$, and $b(x)\leq Ca(x)$ for a.a.\,$x\in\Omega$ with some constant $C$}. In this case, it is clear that
	\begin{equation*}
		a(x)^{\frac{r(x)}{p(x)}}\geq a_0>0\quad \text{for a.a.\,} x\in\Omega,
	\end{equation*}
	with some positive constant $a_0$. We have
	\begin{align*}
		&t\partial_t\Psi(x,t)- (n^++\varepsilon)\Psi(x,t)\\
		&\leq -\eps a_0t^{r(x)}+\left(\frac{s(x)}{q(x)}\frac{\eta t}{1+\eta t}\frac{1}{\log(1+\eta t)}-\eps\right)\left(b(x)\log(1+\eta t)\right)^\frac{s(x)}{q(x)} t^{s(x)}.
	\end{align*}
	Thus, we find $t_{1,\varepsilon}, t_{2,\varepsilon}$ with $0<t_{1,\varepsilon}< 1<t_{2,\varepsilon}$ such that
	\begin{align*}
		t\partial_t\Psi(x,t)- (n^++\varepsilon)\Psi(x,t)\leq 0\quad \text{for a.a.\,}x\in\Omega\text{ and for all } t\in [0,t_{1,\varepsilon}]\cup [t_{2,\varepsilon},\infty).
	\end{align*}
	It follows that
	\begin{align*}
		\partial_t f(x,t)=t^{-(n^++\eps+1)}\left[t\partial_t\Psi(x,t)- (n^++\varepsilon)\Psi(x,t)\right]\leq 0
	\end{align*}
	for a.a.\,$x\in\Omega$ and for all $t\in [0,t_{1,\varepsilon}]\cup [t_{2,\varepsilon},\infty).$ Then, by using the estimate
	\begin{align*}
		f(x,t)\leq \frac{\max_{\xi\in [t_{1,\eps}, t_{2,\eps}]}f(x,\xi)}{\min_{\xi\in [t_{1,\eps}, t_{2,\eps}]}f(x,\xi)}f(x,\tau)\quad\text{for a.a.\,}x\in\Omega \text{ and for all } 0<\tau<t,
	\end{align*}
	we easily derive \eqref{f} with
	\begin{align*}
		C_\eps=\frac{t_{1,\eps}^{-(n^++\eps-m^-)}\left(1+\log(1+\eta t_{2,\eps})\right)^{\left(\frac{s}{q}\right)^+}\left\|a^{\frac{r}{p}}+b^{\frac{s}{q}}\right\|_{L^{\infty}(\Omega)}}{a_0t_{2,\eps}^{m^--n^+-\eps}}.
	\end{align*}
	This completes the proof.
\end{proof}

The next proposition follows immediately from Propositions \ref{P.nor-mod-A} and \ref{P.aCon}.

\begin{proposition}\label{P.nor-mod-S}
	Assume that \textnormal{(\hyperlink{H0}{H$_0$})} holds. If $\delta=1$, additionally assume that $a\in L^\infty(\Omega)$ and $b(x)\leq Ca(x)$ for a.a.\,$x\in\Omega$ with some constant $C>0$. Let $(X,\varrho_X,\|\cdot\|_X)$ be either $\left(L^{\mathcal{S}}(\Omega),\varrho_{\mathcal{S}},\|\cdot\|_{\mathcal{S}}\right)$ or $\left(W^{1, \mathcal{S}}(\Omega),\varrho_{1,\mathcal{S}},\|\cdot\|_{1, \mathcal{S}}\right)$.  Then,  for any given $\varepsilon>0$, there exists a constant $C_\varepsilon\geq 1$ such that
	\begin{align*}
		C_\varepsilon^{-1}\min\left\{\|u\|_{X}^{p^-}, \|u\|_{X}^{q^++\varepsilon}\right\} \leq \varrho_{X}(u) \leq C_\varepsilon\max\left\{\|u\|_{X}^{p^-}, \|u\|_{X}^{q^++\varepsilon}\right\}\quad \text{for all }u\in X.
	\end{align*}
\end{proposition}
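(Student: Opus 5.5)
The plan is to reduce the statement to Proposition \ref{P.nor-mod-A}. To do so, I will verify that $\mathcal{S}$ satisfies \textnormal{(aInc)}$_{p^-}$ and \textnormal{(aDec)}$_{q^++\varepsilon}$ for every $\varepsilon>0$, with a constant $C_\varepsilon\geq 1$ independent of $x$. The observation is that $\mathcal{S}$ is exactly the function $\Psi$ from \eqref{Psi} with the choice $r=p$ and $s=q$. Indeed, $(a^{1/p}t)^{p}=a t^{p}$ and $((b\log(\delta+\eta t))^{1/q}t)^{q}=b t^{q}\log(\delta+\eta t)$. Since $p,q\in C_+(\Omega)$ by \textnormal{(\hyperlink{H0}{H$_0$})}(ii), Proposition \ref{P.aCon} applies, under the same extra hypotheses in the case $\delta=1$. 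It gives \textnormal{(aInc)}$_{m^-}$ and \textnormal{(aDec)}$_{n^++\varepsilon}$ with $m=\min\{p,q\}=p$ and $n=\max\{p,q\}=q$, because $p\leq q$. So $\mathcal{S}$ satisfies \textnormal{(aInc)}$_{p^-}$ and \textnormal{(aDec)}$_{q^++\varepsilon}$.

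For $X=L^{\mathcal{S}}(\Omega)$, the first inequality of Proposition \ref{P.nor-mod-A} applied with exponents $p^-\leq q^++\varepsilon$ gives the claim directly. The constant is the maximum of the (aInc) constant, which is $1$ here, and the (aDec) constant $C_\varepsilon$. For $X=W^{1,\mathcal{S}}(\Omega)$, the second inequality of the same proposition gives the corresponding estimate for $\varrho_{1,\mathcal{S}}$ and $\|\cdot\|_{1,\mathcal{S}}$, with the same constant.

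The only point that needs care is that Proposition \ref{P.nor-mod-A} is stated for $\varphi\in\Phi(\Omega)$, so the main obstacle is to confirm that $\mathcal{S}\in\Phi(\Omega)$. This is supplied by the proposition quoted from \cite{Arora-Crespo-Blanco-Winkert-2025-b}, which holds under \textnormal{(\hyperlink{H0}{H$_0$})} alone. I also need to check that the constant in \textnormal{(aDec)}$_{q^++\varepsilon}$ is uniform in $x$.

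In the case $\delta>1$, uniformity holds because the bound in the proof of Proposition \ref{P.aCon} comes from the one-variable inequality \eqref{g} for $g_\sigma$. That inequality does not depend on $x$, and the powers $t^{r-n^+-\varepsilon}$ and $t^{s-n^++\sigma s/q-\varepsilon}$ have nonpositive exponents uniformly in $x$.

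In the case $\delta=1$, uniformity holds because the constant $C_\varepsilon$ given there depends only on $\|a^{r/p}+b^{s/q}\|_{L^\infty(\Omega)}$ and on $a_0$. Here $a_0$ is a positive lower bound for $a(x)^{r(x)/p(x)}$, which equals $a(x)$ when $r=p$; it exists because $a+b\geq d$ and $b\leq Ca$ give $a\geq d/(1+C)$. The sup-norm is finite since $a\in L^\infty(\Omega)$ and $b\leq Ca$.

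With these checks done, the proposition follows.
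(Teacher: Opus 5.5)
Your proposal is correct and matches the paper's argument, which says the proposition follows immediately from Proposition~\ref{P.aCon} (with $r=p$, $s=q$, so that $\Psi=\mathcal{S}$, $m^-=p^-$ and $n^+=q^+$) combined with Proposition~\ref{P.nor-mod-A}. Your extra checks that the constant is uniform in $x$ are harmless but redundant: the definition of \textnormal{(aDec)} already requires a constant independent of $x$, and Proposition~\ref{P.aCon} supplies exactly that.
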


Following the arguments in the proofs of Arora--Crespo-Blanco--Winkert \cite[Proposition 3.8]{Arora-Crespo-Blanco-Winkert-2025-b} and Ho--Winkert \cite[Proposition 3.7]{Ho-Winkert-2023}, we obtain the following embedding result.

\begin{proposition}\label{emb-main}
	Let \textnormal{(\hyperlink{H0}{H$_0$})}  and \textnormal{(\hyperlink{H1}{H$_1$})} be satisfied. Let $\Psi$ be defined as in \eqref{Psi} with $r,s\in C(\overline{\Omega})$ satisfying $1<r(x)\leq p^*(x)$ and $1<s(x)\leq q^*(x)$ for all $x\in\overline{\Omega}$. Then we have the continuous embedding
	\begin{align}\label{Prop-S-E}
		W^{1,\mathcal{S}}(\Omega)
		\hookrightarrow  L^{\Psi}(\Omega).
	\end{align}
	Furthermore, if $r(x)<p^*(x)$ and $s(x)< q^*(x)$ for all $x\in\overline{\Omega}$, then the embedding in \eqref{Prop-S-E} is compact.
\end{proposition}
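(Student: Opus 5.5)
The embedding \eqref{Prop-S-E} will be obtained by reducing it to two classical variable exponent Sobolev embeddings, one for each phase, following the scheme of \cite[Proposition 3.8]{Arora-Crespo-Blanco-Winkert-2025-b} and \cite[Proposition 3.7]{Ho-Winkert-2023}. The key observation is that $\Psi(x,t)=\Psi_1(x,t)+\Psi_2(x,t)$ with
\begin{align*}
	\Psi_1(x,t)=\big(a(x)^{\frac1{p(x)}}t\big)^{r(x)},\qquad \Psi_2(x,t)=\big((b(x)\log(\delta+\eta t))^{\frac1{q(x)}}t\big)^{s(x)}.
\end{align*}
Both phases are handled through the substitutions $v_1=a^{1/p}u$ and $v_2=(b\log(\delta+\eta|u|))^{1/q}u$. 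Hence it suffices to show that $u\in W^{1,\mathcal{S}}(\Omega)$ implies $v_1\in W^{1,p(\cdot)}(\Omega)$ and $v_2\in W^{1,q(\cdot)}(\Omega)$, with norms controlled by $\|u\|_{1,\mathcal{S}}$. Once this is known, the embeddings $W^{1,p(\cdot)}\hookrightarrow L^{p^*(\cdot)}$ and $W^{1,q(\cdot)}\hookrightarrow L^{q^*(\cdot)}$, valid on Lipschitz domains for Lipschitz exponents, give $\varrho_{\Psi_1}(u)\le \int |v_1|^{r}\,\mathrm{d}x<\infty$ because $r\le p^*$ and $\Omega$ is bounded. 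The same argument gives the bound for $\Psi_2$ using $s\le q^*$.

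By density and homogeneity, I will first prove a modular estimate $\varrho_\Psi(u)\le C$ for all $u$ with $\|u\|_{1,\mathcal{S}}\le 1$. Continuity of the embedding then follows from Proposition~\ref{P.nor-mod-A} and Proposition~\ref{P.aCon}. Here one must note that $\Psi$ satisfies (aDec) and (aInc), so the modular controls the Luxemburg norm.

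The main obstacle is the gradient computation. One has
\begin{align*}
	\nabla v_1=a^{1/p}\nabla u+u\,\nabla(a^{1/p}),\qquad \nabla(a^{1/p})=a^{1/p}\Big(\frac{\nabla a}{p a}-\frac{\log a}{p^2}\nabla p\Big),
\end{align*}
and the factor $1/a$ is singular where $a$ vanishes. Following \cite{Ho-Winkert-2023}, I would replace $a$ by $a+\epsilon$, use that $a\in C^{0,1}$, and bound
\begin{align*}
	|\nabla a|\,a^{1/p-1}|u|\le C\big(a^{1/p}|u|\big)^{1-\theta}|u|^{\theta}
\end{align*}
for suitable $\theta$. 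The term $|u|^{\theta}$ is then absorbed by Young's inequality into $|u|^{p}$ and into lower powers of $a^{1/p}|u|$. The same treatment applies to $v_2$, with additional terms $\eta b|u|\,|\nabla u|/(\delta+\eta|u|)$ coming from the logarithm and $\log(b\log(\cdot))\nabla q$ coming from the exponent. The lower bound $a+b\ge d$ together with $\delta\ge1$ guarantees $u\in L^{p^-}$, which controls these leftovers. In the case $\delta=1$ the logarithm degenerates near $t=0$, and this is where the hypothesis $b\le Ca$ would be used.

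Hypothesis (H$_1$)(ii), namely $q/p<1+1/N$, enters when estimating $\int |u|^{q}\log(\cdot)$-type remainders in the $b$-phase. The idea is to interpolate between $L^{p}$ and $L^{p^*}$, noting that $p^*>q$ follows from $q<p(1+1/N)$, and possibly to bootstrap using the $a$-phase embedding already established; this closing step is the delicate point. Compactness for $r<p^*$ and $s<q^*$ follows from the compactness of $W^{1,p(\cdot)}\hookrightarrow L^{r(\cdot)}$ and $W^{1,q(\cdot)}\hookrightarrow L^{s(\cdot)}$, applied to the sequences $v_1$ and $v_2$. Convergence in measure of $u_n$, which holds since $W^{1,\mathcal{S}}\hookrightarrow W^{1,p^-}$, makes the two representations consistent, and Vitali's theorem then gives $\varrho_\Psi(u_n-u)\to0$.
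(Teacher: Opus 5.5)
\textbf{The gap: the substitutions do not produce Sobolev functions.} Your reduction rests on the claim that $u\in W^{1,\mathcal{S}}(\Omega)$ forces $v_1=a^{1/p}u\in W^{1,p(\cdot)}(\Omega)$ and $v_2=(b\log(\delta+\eta|u|))^{1/q}u\in W^{1,q(\cdot)}(\Omega)$. This claim is false, because a root of a Lipschitz weight is not a Sobolev multiplier. The product rule produces $\frac1q\,u\,b^{\frac1q-1}\nabla b$, whose $q$-th power $b^{1-q}|\nabla b|^q|u|^q$ is not controlled near $\{b=0\}$ by anything in the $\mathcal{S}$-modular.

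Concretely, take $\Omega$ the unit ball in $\R^N$ with $N\geq 3$, $p\equiv q\equiv 2$, $a\equiv 1$, $b(x)=|x_1|$, $\delta=e$, $\eta=0$. Then (H$_0$) and (H$_1$) hold and $u\equiv 1\in W^{1,\mathcal{S}}(\Omega)$. But $v_2=|x_1|^{1/2}$ has $|\nabla v_2|^2=\frac14|x_1|^{-1}\notin L^1(\Omega)$. Exchanging $a$ and $b$ does the same to $v_1$.

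Your proposed remedy does not repair this. For $\theta\in(0,1)$, the bound $|\nabla a|\,a^{1/p-1}|u|\leq C(a^{1/p}|u|)^{1-\theta}|u|^{\theta}$ fails as $a\to 0$ with $|\nabla a|$ and $u$ bounded away from zero. The left side grows like $a^{1/p-1}$, while the right side decays like $a^{(1-\theta)/p}$. Regularizing with $a+\epsilon$ only gives bounds that blow up as $\epsilon\to 0$.

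Note also that the $a$-phase needs no substitution at all. Since $a$ is bounded and $a+b\geq d$, you have $W^{1,\mathcal{S}}(\Omega)\hookrightarrow W^{1,p(\cdot)}(\Omega)\hookrightarrow L^{p^*(\cdot)}(\Omega)$. The whole content of the proposition is the $b$-phase, where you need exponents up to $q^*>p^*$ with the sharp weight $b^{s/q}$. That is a genuinely weighted Sobolev inequality, and it cannot be reached by multiplying $u$ by $b^{1/q}$ and invoking the unweighted $W^{1,q(\cdot)}$ embedding.

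\textbf{What the cited route does instead.} The paper does not reprove the proposition. The arguments it follows (\cite[Proposition 3.8]{Arora-Crespo-Blanco-Winkert-2025-b}, \cite[Proposition 3.7]{Ho-Winkert-2023}) go through Fan's embedding theorem for Musielak--Sobolev spaces \cite{Fan-2012}. One checks Fan's structural conditions for $\mathcal{S}$. The decisive one is the $x$-regularity condition $|\nabla_x\mathcal{S}(x,t)|\leq C\,\mathcal{S}(x,t)^{1+\delta_0}$ whenever $\mathcal{S}(x,t)\geq 1$, for some $\delta_0<\frac1N$.

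The dangerous term there is $|\nabla b|\,t^{q(x)}\log(\delta+\eta t)$, which carries no factor $b$; this is exactly the singularity that breaks your substitution. Since $\mathcal{S}(x,t)\geq c\,t^{p(x)}$ for $t\geq 1$, this term is absorbed by $(c\,t^{p(x)})^{1+\delta_0}$ once $q<p(1+\delta_0)$. Such a $\delta_0<\frac1N$ exists precisely because $q/p<1+\frac1N$. So this, and not an interpolation of $|u|^q\log$-remainders, is where (H$_1$)(ii) and the Lipschitz continuity of $a,b,p,q$ enter.

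One then compares the Sobolev conjugate $\mathcal{S}_*$, defined by $\mathcal{S}_*^{-1}(x,t)=\int_0^t\mathcal{S}^{-1}(x,\tau)\,\tau^{-1-1/N}\,\mathrm{d}\tau$, with $\Psi$. Domination of $\Psi$ by $\mathcal{S}_*$ gives the continuous embedding, and strict subcriticality gives compactness.

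\textbf{Smaller points.} Your Vitali argument for compactness would be acceptable once the critical embedding into $L^{\mathcal{S}^*}(\Omega)$ is known. Indeed, $\Psi(x,t)\leq\varepsilon\,\mathcal{S}^*(x,t)+C_\varepsilon$ uniformly in $x$ when $r<p^*$ and $s<q^*$. As written, however, it also relies on the false memberships of $v_1$ and $v_2$.

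Finally, the proposition assumes only (H$_0$) and (H$_1$). So for $\delta=1$ you may not use $b\leq Ca$, nor Proposition~\ref{P.aCon}. To pass from a modular bound to a norm bound, it suffices that $t\mapsto\Psi(x,t)/t$ is increasing.
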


Regarding Proposition \ref{emb-main}, we also mention the work by Cianchi--Diening \cite{Cianchi-Diening-2024} about compact embeddings in Musielak-Orlicz spaces and the papers by Cianchi \cite{Cianchi-1996,Cianchi-2004} about optimal embeddings in Orlicz spaces, see also Donaldson--Trudinger \cite{Donaldson-Trudinger-1971}.

For a measurable subset $E$ of $\mathbb{R}^N$, we denote by $|E|$ the $N$-dimensional Lebesgue measure of $E$. For a measurable function $f \colon E \to \mathbb{R}$ we write
\begin{align*}
	\sup_E f := \operatorname{esssup}_E f, \quad \inf_E f := \operatorname{essinf}_E f, \quad\text{and}\quad \operatorname{osc}_E f := \sup_E f - \inf_E f.
\end{align*}

For a ball $B_{\rho}(x_0) \subset \mathbb{R}^N$ centered at $x_0 \in \mathbb{R}^N$ with radius $\rho$, we set
\begin{align*}
	\Omega_{\rho}(x_0) := \Omega \cap B_{\rho}(x_0), \quad (\partial \Omega)_{\rho}(x_0) := \partial \Omega \cap B_{\rho}(x_0), \quad \omega_N := |B_1(x_0)|.
\end{align*}
When $x_0$ is clear from the context, we omit it from the above notation.

Let $\Gamma$ be either a portion of $\partial \Omega$ or the whole boundary $\partial \Omega$. We denote by $C_0^1(\overline{\Omega} \setminus \Gamma)$ the set of functions in $C^1(\overline{\Omega})$ with compact support in $\overline{\Omega} \setminus \Gamma$, and by $W_0^{1,1}(\overline{\Omega} \setminus \Gamma)$ the closure of $C_0^1(\overline{\Omega} \setminus \Gamma)$ in $W^{1,1}(\Omega)$. In particular, if $\Gamma = \partial \Omega$, then we define $W_0^{1,1}(\Omega) := W_0^{1,1}(\overline{\Omega} \setminus \Gamma)$.

Let $u\in W^{1,1}(\Omega)$. We say that $u\leq 0$ on $\Gamma$ if $\max\{u,0\}\in W_0^{1,1}(\overline{\Omega} \setminus \Gamma)$. We say that $u\geq 0$ on $\Gamma$ if $-u\leq 0$ on $\Gamma$. For $u,v \in W^{1,1}(\Omega)$, we say that $u\leq v$ on $\Gamma$ if $u-v\leq 0$ on $\Gamma$. We define
\begin{equation*}
	\begin{aligned}
		\sup_\Gamma u &:= \inf\left\{k\in\mathbb{R}\colon  u\leq k \text{ on } \Gamma\right\},&\inf_\Gamma u &:=-\sup_\Gamma(-u),\\
		\sup_\Gamma |u| &:= \max \{- \inf_\Gamma u, \sup_\Gamma u\}, & \operatorname{osc}_{\Gamma} u &:= \sup_\Gamma u - \inf_\Gamma u.
	\end{aligned}
\end{equation*}
Note that if $\Omega$ is a bounded Lipschitz domain in $\mathbb{R}^N$ and $\Gamma=\partial \Omega$, then the above definitions for $u \in W^{1,1}(\Omega)$ coincide with the corresponding definitions for the trace of $u$ on $\partial \Omega$.

In the remainder of this section, we collect several useful results that will be key ingredients in our arguments for establishing the boundedness and H\"{o}lder continuity of solutions via De Giorgi iteration. The next lemma concerns the geometric convergence of sequences of nonnegative numbers. A proof can be found in Ho--Sim \cite[Lemma 4.3]{Ho-Sim-2015}.

\begin{lemma}\label{leRecur}
	Let $\{Z_n\}, n=0,1,2,\ldots,$ be a sequence of nonnegative numbers, satisfying the recursion inequality
	\begin{align*}
		Z_{n+1} \leq K b^n \left (Z_n^{1+\mu_1}+ Z_n^{1+\mu_2} \right ) , \quad n=0,1,2, \ldots,
	\end{align*}
	for some $b>1$, $K>0$ and $\mu_2\geq \mu_1>0$. If
		\begin{align*}
		Z_0 \leq \min  \left((2K)^{-\frac{1}{\mu_1}} b^{-\frac{1}{\mu_1^2}}, (2K)^{-\frac{1}{\mu_2}}b^{-\frac{1}{\mu_1 \mu_2}-\frac{\mu_2-\mu_1}{\mu_2^2}}\right),
	\end{align*}
	then $Z_n \to 0$ as $n \to \infty$.
\end{lemma}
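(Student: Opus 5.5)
We prove Lemma~\ref{leRecur} by induction, showing that $Z_n$ decays geometrically. The natural ansatz is
\begin{align*}
	Z_n \leq Z_0\, b^{-\frac{n}{\mu_1}} \quad\text{for all } n\geq 0,
\end{align*}
which gives $Z_n\to 0$ because $b>1$. The case $n=0$ is trivial. For the inductive step we use the recursion and bound $Z_n^{1+\mu_i}$ by $(Z_0 b^{-n/\mu_1})^{1+\mu_i}$ for $i=1,2$. Then it suffices to show
\begin{align*}
	K b^n Z_0^{1+\mu_i} b^{-\frac{n(1+\mu_i)}{\mu_1}} \leq \tfrac12 Z_0 b^{-\frac{n+1}{\mu_1}}, \quad i=1,2,
\end{align*}
since summing these two inequalities closes the induction.

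For $i=1$ the condition reads
\begin{align*}
	2K Z_0^{\mu_1} b^{n - n - \frac{n}{\mu_1} + \frac{n+1}{\mu_1}} = 2K Z_0^{\mu_1} b^{\frac{1}{\mu_1}} \leq 1 .
\end{align*}
This is exactly implied by $Z_0\le (2K)^{-1/\mu_1} b^{-1/\mu_1^2}$.

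For $i=2$ the condition reads
\begin{align*}
	2K Z_0^{\mu_2} b^{\,n - \frac{n(1+\mu_2)}{\mu_1} + \frac{n+1}{\mu_1}} \leq 1 .
\end{align*}
The exponent of $b$ equals $n\bigl(1-\frac{\mu_2}{\mu_1}\bigr)+\frac1{\mu_1}$, which is at most $\frac{1}{\mu_1}$ because $\mu_2\ge\mu_1$. So it suffices to have $2K Z_0^{\mu_2} b^{1/\mu_1}\le 1$, that is,
\begin{align*}
	Z_0\le (2K)^{-1/\mu_2} b^{-\frac{1}{\mu_1\mu_2}} .
\end{align*}
The stated hypothesis contains the additional factor $b^{-(\mu_2-\mu_1)/\mu_2^2}\le 1$, so it is stronger than what we need, and the step goes through.

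The only real obstacle is bookkeeping: both powers must be controlled by the same geometric rate, and we must choose that rate to be $b^{-n/\mu_1}$ rather than $b^{-n/\mu_2}$. If the rate $b^{-n/\mu_2}$ were chosen instead, the $i=1$ term would produce a positive power $b^{n(1-\mu_1/\mu_2)}$ that grows with $n$, and no smallness of $Z_0$ could absorb it. With $r=b^{-1/\mu_1}$ the $\mu_2$ term only gains from the extra decay, and the argument closes. If one prefers to match the paper's exact constant, one can use the rate $b^{-n/\mu_1}$ and simply note that the given bound on $Z_0$ implies both sufficient conditions derived above.
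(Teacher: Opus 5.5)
Your proof is correct: with the ansatz $Z_n\le Z_0 b^{-n/\mu_1}$ the two terms reduce to $2KZ_0^{\mu_1}b^{1/\mu_1}\le 1$ and $2KZ_0^{\mu_2}b^{n(1-\mu_2/\mu_1)+1/\mu_1}\le 1$, and both follow from the hypothesis (the case $Z_0=0$ is trivial, since then every $Z_n$ vanishes). As you note, the second condition only requires $Z_0\le (2K)^{-1/\mu_2}b^{-1/(\mu_1\mu_2)}$, so the factor $b^{-(\mu_2-\mu_1)/\mu_2^2}$ in the statement is not needed. The paper gives no proof of its own and simply cites Ho--Sim; your argument is the standard geometric-decay induction (as in Ladyzhenskaya--Ural'tseva) on which such lemmas rest.
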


The next three lemmas are taken from the book by Lady{\v{z}}enskaja--Ural{\cprime}ceva \cite{Ladyzenskaja-Uralceva-1968}, see Lemmas 3.3, 3.5, and 4.8 in Chapter 2.

\begin{lemma}\label{L2.12}
	Let $u \in W^{1,m}(\Omega)$, $m\geq 1$, and let $B_{\rho}(x_0)$ be a ball centered at $x_0\in\partial\Omega$ with radius $\rho$. Then, for $k \geq \sup_{(\partial \Omega)_{\rho}(x_0)} u$, the function
	\begin{align*}
		\hat{u}^{(k)}(x) =
		\begin{cases}
			\max\left\{u(x),k\right\}, & x \in \Omega_\rho(x_0),\\
			k,& x \in B_{\rho}(x_0)\setminus\Omega_\rho(x_0),
		\end{cases}
	\end{align*}
	belongs to $W^{1,m}(B_{\rho}(x_0))$.
\end{lemma}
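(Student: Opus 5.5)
This is the classical Ladyzhenskaya--Ural'tseva extension lemma, and I would prove it by reducing to a statement about truncations that vanish near the relevant part of the boundary. Set $v:=\max\{u,k\}-k=(u-k)^+$ on $\Omega_\rho(x_0)$. By the definition of $\sup_{(\partial\Omega)_\rho(x_0)} u$ as an infimum, and since $k\geq \sup_{(\partial\Omega)_\rho(x_0)}u$, we have $u\leq k'$ on $(\partial\Omega)_\rho(x_0)$ for every $k'>k$. We also want this for $k'=k$. If the infimum is attained this is immediate. Otherwise I would pass to the limit $k'\downarrow k$: $(u-k')^+\to (u-k)^+$ in $W^{1,1}$, so $(u-k)^+\in W_0^{1,1}(\overline{\Omega}\setminus\Gamma)$ with $\Gamma=(\partial\Omega)_\rho(x_0)$, because that space is closed. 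I would localize this membership to $\Omega_\rho(x_0)$ in the natural sense: $v$ is approximable in $W^{1,1}(\Omega)$ by $C^1$ functions whose supports avoid $\Gamma$.

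Having $\hat u^{(k)}=k+\tilde v$, where $\tilde v$ is the zero extension of $v$ to $B_\rho(x_0)\setminus\Omega$, it suffices to show $\tilde v\in W^{1,m}(B_\rho(x_0))$. For this I would take approximants $\varphi_j\in C^1_0(\overline{\Omega}\setminus\Gamma)$ with $\varphi_j\to (u-k)^+$ in $W^{1,1}(\Omega)$. Their zero extensions to $B_\rho(x_0)$ are Lipschitz. Near $\partial B_\rho\cap\Omega$ nothing needs to vanish, since we only test against $C_c^\infty(B_\rho(x_0))$. Near $\Gamma$ the supports stay away from the boundary, so no jump is created. Hence for any $\psi\in C_c^\infty(B_\rho(x_0))$,
\begin{align*}
\int_{B_\rho}\tilde\varphi_j\,\partial_i\psi\,\mathrm{d}x=-\int_{B_\rho}\widetilde{\partial_i\varphi_j}\,\psi\,\mathrm{d}x .
\end{align*}
Passing to the limit gives a weak gradient of $\tilde v$, namely the zero extension of $\nabla (u-k)^+=\chi_{\{u>k\}}\nabla u$. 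This lies in $L^m$ since $u\in W^{1,m}(\Omega)$, and $\tilde v\in L^m$ likewise. So $\tilde v\in W^{1,m}(B_\rho(x_0))$, and adding the constant $k$ yields the claim.

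The main obstacle is the bookkeeping around the boundary portion. The definition of $u\leq k$ on $\Gamma$ is phrased with $\Gamma$ a portion of $\partial\Omega$ and global test classes. One has to make sure that membership in $W_0^{1,1}(\overline\Omega\setminus\Gamma)$ really prevents a jump across $\Gamma\cap B_\rho$ when extending by zero. This is handled by the distributional identity above, since only test functions supported in $B_\rho$ matter. A secondary point is that the approximation is only in $W^{1,1}$ while we want $W^{1,m}$. That is fine: the weak derivative is identified via the $W^{1,1}$ limit, and its integrability then comes from the explicit formula $\chi_{\{u>k\}}\nabla u$ restricted to $\Omega_\rho(x_0)$.
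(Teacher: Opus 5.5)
Your argument is correct and is essentially the classical Ladyzhenskaya--Ural'tseva proof, which the paper cites (Chapter 2, Lemma 3.3) without reproducing: approximate $(u-k)_+$ in $W^{1,1}(\Omega)$ by functions $\varphi_j\in C_0^1(\overline{\Omega}\setminus\Gamma)$; for $\psi\in C_c^\infty(B_\rho(x_0))$ the product $\varphi_j\psi$ has compact support in $\Omega_\rho(x_0)$, so the zero extensions satisfy the integration-by-parts identity; then pass to the limit and read off $W^{1,m}$-integrability from $\nabla(u-k)_+=\chi_{\{u>k\}}\nabla u$. The one step you leave implicit is that the set $\{k'\colon u\le k' \text{ on } \Gamma\}$ is closed upward, which your claim ``$u\le k'$ on $\Gamma$ for every $k'>k$'' needs. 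This is routine: either apply a smooth $G_\varepsilon$ vanishing on $(-\infty,k'-k'']$ to the approximants of $(u-k'')_+$, or avoid it entirely by running your limit argument along a minimizing sequence to get the extension at an admissible level $k''\le k$ and then using $\hat u^{(k)}=\max\{\hat u^{(k'')},k\}$ inside the ball.
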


\begin{lemma}\label{L2.6}
	Let $B_{\rho}$ be a ball of radius $\rho$ in $\mathbb{R}^N$. Then, for every $u \in W^{1,1}(B_{\rho})$ and all numbers $k$ and $l$ satisfying $k < l$, the following inequality holds:
	\begin{align*}
		(l - k) |A_{l,\rho}|^{1 - \frac{1}{N}} \leq\frac{\beta\rho^N}{|B_{\rho} \setminus A_{k,\rho}|}  \int_{A_{k,\rho} \setminus A_{l,\rho}} |\nabla u| \, \mathrm{d} x,
	\end{align*}
	where $A_{t,\rho} := \{ x \in B_{\rho} \colon u(x) > t \}$ for $t\in\{l,k\}$ and $\beta > 1$ is a constant depending only on $N$.
\end{lemma}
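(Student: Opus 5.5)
The plan is to reduce the inequality to a pointwise potential estimate for a suitable truncation of $u$, combined with the weak-type $(1,\frac{N}{N-1})$ bound for the Riesz potential of order one. This gives exactly the stated level-set inequality, since only a measure of a superlevel set appears on the left-hand side.

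\emph{Step 1 (truncation).} For $k<l$ I will set
\begin{align*}
	v:=\min\{\max\{u,k\},l\}-k .
\end{align*}
By the chain rule for truncations in $W^{1,1}(B_\rho)$, we have $v\in W^{1,1}(B_\rho)$ with $0\le v\le l-k$. Moreover $v=0$ on $E:=B_\rho\setminus A_{k,\rho}$ and $v=l-k$ on $A_{l,\rho}$. Its gradient is $\nabla v=\nabla u\,\chi_{A_{k,\rho}\setminus A_{l,\rho}}$ a.e. Hence $\int_{B_\rho}|\nabla v|\,\mathrm{d}x=\int_{A_{k,\rho}\setminus A_{l,\rho}}|\nabla u|\,\mathrm{d}x$. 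If $|E|=0$ the right-hand side is $+\infty$ and there is nothing to prove, so I assume $|E|>0$.

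\emph{Step 2 (potential estimate).} I will use the classical representation inequality on convex sets (Gilbarg--Trudinger, Lemma 7.16), applied on the ball $B_\rho$ with averaging set $E$. For $w\in W^{1,1}(B_\rho)$ and a.e.\ $x\in B_\rho$ it reads
\begin{align*}
	|w(x)-w_E|\le \frac{(2\rho)^N}{N|E|}\int_{B_\rho}\frac{|\nabla w(z)|}{|x-z|^{N-1}}\,\mathrm{d}z,\qquad w_E:=\frac{1}{|E|}\int_E w\,\mathrm{d}x .
\end{align*}
I will prove it first for $C^1$ functions, by integrating along segments from $x$ to points of $E$, and then pass to $W^{1,1}$ by density. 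Applied to $w=v$, it gives $w_E=0$, and therefore $v(x)\le C_N\frac{\rho^N}{|E|}\,I_1(|\nabla v|\chi_{B_\rho})(x)$. Here $I_1f(x)=\int|f(z)||x-z|^{1-N}\,\mathrm{d}z$.

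\emph{Step 3 (weak-type bound and conclusion).} The Riesz potential satisfies the weak-type inequality
\begin{align*}
	|\{I_1 f>\lambda\}|^{\frac{N-1}{N}}\le C_N\lambda^{-1}\|f\|_{L^1(\mathbb{R}^N)}.
\end{align*}
This follows by splitting the kernel at a radius $R$ into a near part and a far part, bounding the near part in $L^1$ and the far part in $L^\infty$, and optimizing $R$. Since $A_{l,\rho}\subseteq\{v\ge l-k\}$, Step 2 gives
\begin{align*}
	A_{l,\rho}\subseteq\Big\{I_1(|\nabla v|)\ge \tfrac{(l-k)|E|}{C_N\rho^N}\Big\}.
\end{align*}
Applying the weak-type bound with $\lambda=\tfrac{(l-k)|E|}{C_N\rho^N}$ (with a harmless adjustment from $\ge$ to $>$ via $\lambda\to\lambda^-$) yields
\begin{align*}
	(l-k)|A_{l,\rho}|^{1-\frac1N}\le \beta\frac{\rho^N}{|E|}\int_{A_{k,\rho}\setminus A_{l,\rho}}|\nabla u|\,\mathrm{d}x,
\end{align*}
with $\beta$ depending only on $N$; it can be enlarged so that $\beta>1$.

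The main obstacle is Step 2, namely justifying the representation estimate for a general $W^{1,1}$ function. For smooth functions this is an elementary polar-coordinates computation. The extension needs approximation by smooth functions on the ball together with a.e.\ convergence along a subsequence. The right-hand side converges in $L^1$ because $I_1$ is bounded $L^1(B_\rho)\to L^1(B_\rho)$, the kernel $|x-z|^{1-N}$ being integrable on bounded sets. Everything else is a routine truncation argument.
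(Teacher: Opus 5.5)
Your proof is correct, but its key analytic step differs from the standard one. The paper gives no proof and simply cites Lady\v{z}enskaja--Ural'ceva. The standard argument behind that citation uses your truncation $v=\min\{\max\{u,k\},l\}-k$ and then a \emph{strong-type} Sobolev--Poincar\'e inequality: $\|v\|_{L^{N/(N-1)}(B_\rho)}\le \beta\frac{\rho^N}{|E|}\|\nabla v\|_{L^1(B_\rho)}$ for $W^{1,1}$ functions vanishing on a set $E$ of positive measure. The lemma then follows at once from $\|v\|_{L^{N/(N-1)}(B_\rho)}\ge (l-k)|A_{l,\rho}|^{1-\frac1N}$.

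That strong-type bound cannot come from the potential estimate alone, since $I_1$ does not map $L^1$ into $L^{N/(N-1)}$. It ultimately rests on the endpoint Gagliardo--Nirenberg--Sobolev inequality on the ball. You instead use the fact that the left-hand side involves only the measure of a superlevel set of $v$. For that, the weak-type $(1,\frac{N}{N-1})$ bound for $I_1$ plus a Chebyshev argument suffices. Your route is elementary and self-contained: a representation formula on a convex set and a kernel-splitting estimate. The classical route passes through a stronger intermediate inequality that can be reused elsewhere.

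One small correction: the case $|E|=0$ cannot be dismissed as ``nothing to prove''; it has to be excluded by assumption. Take $u\equiv c>l$. Then $|E|=0$ and $\int_{A_{k,\rho}\setminus A_{l,\rho}}|\nabla u|\,\mathrm{d}x=0$, while the left-hand side is positive. So the right-hand side is an undefined $\infty\cdot 0$, not $+\infty$. The lemma is meant, and used in the paper, only when $|B_\rho\setminus A_{k,\rho}|>0$.
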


\begin{lemma}\label{L2.7}
	Let $u$ be a measurable and bounded in a ball $B_{\rho_0}$ or in a portion of it, $\Omega_{\rho_0}:=B_{\rho_0}\cap\Omega$.
	Consider the balls $B_{\rho}$ and $B_{b\rho}$, where $b>1$ is a fixed constant, concentric with $B_{\rho_0}$. Suppose that for every $\rho \leq b^{-1} \rho_0$, at least one of the following inequalities holds:
	\begin{align*}
		\operatorname{osc}_{\Omega_{\rho}} u \leq c_0 \rho^\varepsilon, \quad \operatorname{osc}_{\Omega_{\rho}} u \leq \theta \operatorname{osc}_{\Omega_{b\rho}} u
	\end{align*}
	for some positive constants $c_0$, $\varepsilon \leq 1$, and $\theta < 1$. Then, for every $\rho \leq \rho_0$,
	\begin{align*}
		\operatorname{osc}_{\Omega_{\rho}} u \leq c \rho_0^{-\alpha} \rho^\alpha,
	\end{align*}
	where
	\begin{align*}
		\alpha = \min\{\varepsilon, -\log_b \theta\}\quad\text{and}\quad c = b^\alpha \max\{c_0\rho_0^\varepsilon, \operatorname{osc}_{\Omega_{\rho_0}} u\}.
	\end{align*}
\end{lemma}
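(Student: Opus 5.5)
The statement to address is Lemma~\ref{L2.7}, the classical iteration lemma of Lady\v{z}enskaja--Ural\cprime ceva. The plan is a direct iteration along the geometric sequence of radii $\rho_j:=b^{-j}\rho_0$, $j=0,1,2,\dots$. Set $\omega(\rho):=\operatorname{osc}_{\Omega_\rho}u$. This function is nondecreasing in $\rho$ because the sets $\Omega_\rho$ are nested, and it is finite because $u$ is bounded. Also put $M:=\max\{c_0\rho_0^\varepsilon,\omega(\rho_0)\}$.

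The first step is to show by induction that $\omega(\rho_j)\le M b^{-j\alpha}$ for all $j\ge0$.
\begin{enumerate}
\item[(a)] For $j=0$ the bound $\omega(\rho_0)\le M$ holds by definition of $M$.
\item[(b)] For the inductive step, note that $\rho_{j+1}\le b^{-1}\rho_0$, so the hypothesis applies at $\rho=\rho_{j+1}$.
\item[(c)] If the first alternative holds, then
\[
\omega(\rho_{j+1})\le c_0\rho_0^\varepsilon b^{-(j+1)\varepsilon}\le M b^{-(j+1)\alpha},
\]
using $\alpha\le\varepsilon$ and $b>1$.
\item[(d)] If the second alternative holds, then by the induction hypothesis
\[
\omega(\rho_{j+1})\le\theta\,\omega(\rho_j)\le \theta M b^{-j\alpha}\le M b^{-(j+1)\alpha},
\]
since $\alpha\le-\log_b\theta$ means exactly $\theta\le b^{-\alpha}$.
\end{enumerate}

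The second step interpolates between the dyadic radii. Take an arbitrary $\rho\in(0,\rho_0]$ and choose $j\ge0$ with $\rho_{j+1}<\rho\le\rho_j$. Monotonicity gives $\omega(\rho)\le\omega(\rho_j)\le M b^{-j\alpha}$. Since $b^{-j}=b\,\rho_{j+1}/\rho_0<b\rho/\rho_0$, we get
\[
b^{-j\alpha}\le b^\alpha\rho_0^{-\alpha}\rho^\alpha.
\]
Combining the two estimates yields $\omega(\rho)\le b^\alpha M\rho_0^{-\alpha}\rho^\alpha=c\,\rho_0^{-\alpha}\rho^\alpha$, which is the claimed bound with the stated constant.

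The argument is elementary. The only point needing care is the bookkeeping that gives exactly the constant $c=b^\alpha M$, namely tracking the factor $b^\alpha$ lost in the interpolation step. Boundedness of $u$ is used only to ensure that $\omega$ is finite, so that the induction starts.
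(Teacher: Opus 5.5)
Your proof is correct and follows the standard argument: induction along the radii $b^{-j}\rho_0$ using the two alternatives, then interpolation between consecutive radii, which costs exactly the factor $b^\alpha$ in the constant. The paper does not prove this lemma itself; it cites Lady{\v{z}}enskaja--Ural{\cprime}ceva (Chapter 2, Lemma 4.8), whose proof is the argument you give.
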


Furthermore, in the following sections, we frequently make use of the following elementary inequalities:
\begin{enumerate}
	\item[\textnormal{(i)}]
		For all $a,b\geq0$, $\eps>0$, and $\eta>1$,
		\begin{align}\label{young}
			ab\leq \frac{1}{\eta}\eps a^\eta+\frac{\eta-1}{\eta}\eps^{-\frac{1}{\eta-1}}b^{\frac{\eta}{\eta-1}}\leq \eps a^\eta+\eps^{-\frac{1}{\eta-1}}b^{\frac{\eta}{\eta-1}}.
		\end{align}
	\item[\textnormal{(ii)}]
		For all $t\geq 0$ and $0<\alpha\leq\beta\leq\gamma$,
		\begin{align}\label{ei}
			t^\beta&\leq t^\alpha+t^\gamma\quad \text{and}\quad t^\beta\leq t^\gamma+1.
		\end{align}
	\item[\textnormal{(iii)}]
		For all $t,\eta\geq 0$ and $\delta\geq 1$,
		\begin{align}\label{ei-1}
			\log(\delta+\eta t)&\geq \min\{1,t\}\log(\delta+\eta).
		\end{align}
	\item[\textnormal{(iv)}]
		For all $t\geq 0$ and $\delta,\gamma\geq 1$,
		\begin{align}\label{ei-2}
			\log(\delta+\gamma t)&\leq \gamma\log(\delta+t).
		\end{align}
	%\item[\textnormal{(v)}]
	%	For all $t\geq 0$, $\sigma> 0$, $\varepsilon>0$, and $\gamma>0$,
	%	\begin{align*}%\label{i-log2}
	%		\log(e+\gamma t)\leq \varepsilon t^\sigma+C(\varepsilon,\sigma).
	%	\end{align*}
\end{enumerate}

Let us now fix our notation. We write $\N_0:=\N \cup \{0\}$ and for a real number $t>1$, we denote by $t':=\frac{t}{t-1}$ the conjugate number of $t$. For a measurable function $v\colon\Omega\to\R$, we set
\begin{align*}
	v_+ := \max \{v,0\}
	\quad\text{and}\quad
	v_- := \max \{-v,0\}.
\end{align*}
By $\|\cdot\|_{X}$, we denote the norm of a normed space $X$.

%********************************************************************
\section{A priori bounds in the subcritical case}\label{Section-3}
%********************************************************************

In this section, we establish a priori bounds for weak solutions of problem \eqref{D} in the subcritical case under assumptions \textnormal{(\hyperlink{H0}{H$_0$})}, \textnormal{(\hyperlink{H1}{H$_1$})}, and \textnormal{(\hyperlink{H2}{H$_2$})}. The proof of Theorem \ref{D.a-priori} is based on ideas used in Ho--Kim \cite{Ho-Kim-2019}, Ho--Winkert \cite{Ho-Winkert-2023}, and Winkert--Zacher \cite{Winkert-Zacher-2012,Winkert-Zacher-2015}.

\begin{proof}[Proof of Theorem \ref{D.a-priori}]
	Let $u$ be a weak solution of problem \eqref{D}.

	{\bf Step 1: Defining the recursion sequence and deriving basic estimates.}

	For each $n\in\N_0$, we define
	\begin{align}\label{def.kn}
		\kappa_n:=\kappa_*\left(2-\frac{1}{2^n}\right)
	\end{align}
	with $\kappa_*>0$ to be specified later, and
	\begin{align}\label{def.vn}
		v_n:=\left(u-\kappa_n\right)_+.
	\end{align}
By using the standard mollifiers we can show that $v_n\in \Wpzero{\mathcal{S}}$, see Proposition 3.8 by Arora--Crespo-Blanco--Winkert \cite{Arora-Crespo-Blanco-Winkert-2025}. Furthermore, we have
	\begin{align*}%\label{S-k_n}
		\kappa_n \nearrow 2\kappa_*
		\quad \text{and}\quad
		\kappa_* \leq \kappa_n <2\kappa_* \quad \text{for all }n\in \mathbb{N}_0.
	\end{align*}
	We define the recursion sequence $\{Z_n\}_{n\in\mathbb{N}_0}$ by
	\begin{align*}%\label{S-Zn.def}
		Z_n:=\int_{A_{\kappa_n} }\Psi(x,v_n)\,\mathrm{d} x\quad \text{for } n\in\N_0,
	\end{align*}
	where
	\begin{align}\label{def.Ak}
		A_\kappa:=\{x\in\Omega\colon u(x)>\kappa\}\quad \text{for } \kappa\in\R.
	\end{align}
	It is clear that
	\begin{align}\label{S-Zn.decreasing}
		A_{\kappa_{n+1}}\subset A_{\kappa_n}
		\quad \text{and}\quad
		Z_{n+1}\leq Z_n \quad\text{for all }n\in \mathbb{N}_0.
	\end{align}
	Using
	\begin{align*}
		u(x)- \kappa_n \geq u(x)\l(1-\frac{\kappa_n}{\kappa_{n+1}}\r) = \frac{u(x)}{2^{n+2}-1} \quad \text{for a.a.\,} x \in A_{\kappa_{n+1}},
	\end{align*}
	we obtain
	\begin{align} \label{S-est.u}
		u(x)\leq (2^{n+2}-1)v_n(x)
		\quad \text{for a.a.\,}x\in A_{\kappa_{n+1}} \text{ and for all } n\in\N_0.
	\end{align}
	Moreover, setting $\lambda_n=\kappa_{n+1}-\kappa_n$ and using \eqref{S-Zn.decreasing}, the estimate $v_{n}\geq \lambda_n$ on $A_{\kappa_{n+1}}$ and \eqref{ei-1}, we have
	\begin{align*}
		Z_n & \geq \int_{A_{\kappa_{n+1}}} \Bigg[\lambda_n^{r(x)} a(x)^\frac{r(x)}{p(x)}\left(\frac{v_{n}}{\lambda_n}\right)^{r(x)}\\
		&\qquad \qquad\qquad+\lambda_n^{s(x)}\left(b(x)\log\left(\delta+\lambda_n\eta \left(\frac{v_{n}}{\lambda_n}\right)\right)\right)^{\frac{s(x)}{q(x)}}\left(\frac{v_{n}}{\lambda_n}\right)^{s(x)}\Bigg]\,\mathrm{d} x \\
		& \geq \int_{A_{\kappa_{n+1}}} \left[\lambda_n^{r(x)} a(x)^\frac{r(x)}{p(x)}+\lambda_n^{s(x)}\left(b(x)\log\left(\delta+\lambda_n\eta\right)\right)^{\frac{s(x)}{q(x)}}\right]\,\mathrm{d} x \\
		& \geq \int_{A_{\kappa_{n+1}}} \left[\lambda_n^{r(x)} a(x)^\frac{r(x)}{p(x)}+\min\left\{\lambda_n^{s(x)},\lambda_n^{s(x)+\frac{s(x)}{q(x)}}\right\}\left(b(x)\log\left(\delta+\eta\right)\right)^{\frac{s(x)}{q(x)}}\right]\,\mathrm{d} x \\
		& \geq \int_{A_{\kappa_{n+1}}} \min\left\{\lambda_n^{r(x)},\lambda_n^{s(x)},\lambda_n^{s(x)+\frac{s(x)}{q(x)}}\right\}\left[a(x)^\frac{r(x)}{p(x)}+\left(b(x)\log\left(\delta+\eta\right)\right)^{\frac{s(x)}{q(x)}}\right]\,\mathrm{d} x\\
		& \geq \min\left\{\lambda_n^{a_1},\lambda_n^{a_2}\right\}\int_{A_{\kappa_{n+1}}} \left[a(x)^\frac{r(x)}{p(x)}+\left(b(x)\log\left(\delta+\eta\right)\right)^{\frac{s(x)}{q(x)}}\right]\,\mathrm{d} x,
	\end{align*}
		where
	\begin{align*}
		a_1:=\min\left\{r^-, s^-\right\}\leq a_2:=\max\left\{r^+, \left(\frac{s(q+1)}{q}\right)^+\right\}.
	\end{align*}
	From this and the estimate
	\begin{equation}\label{d0}
		\begin{aligned}
			&a(x)^\frac{r(x)}{p(x)}+\left(b(x)\log\left(\delta+\eta\right)\right)^{\frac{s(x)}{q(x)}}\\
			&\geq \min\left\{1,\left(\frac{d}{2}\right)^{\left(\frac{r}{p}\right)^+},\left(\frac{d}{2}\log\left(\delta+\eta\right)\right)^{\left(\frac{s}{q}\right)^+}\right\}=:d_0>0\quad \text{for a.a.\,}x\in\Omega,
		\end{aligned}
	\end{equation}
	we arrive at
	\begin{align*}
		Z_n\geq d_0\min\left\{\lambda_n^{a_1},\lambda_n^{a_2}\right\}|A_{\kappa_{n+1}}|.
	\end{align*}
	Thus,
	\begin{align} \label{S-|A_{k_{n+1}}|}
		|A_{\kappa_{n+1}}| \leq \left(\kappa_{*}^{-a_1}+\kappa_{*}^{-a_2}\right)\frac{2^{(n+1)a_2}}{d_0} Z_n\leq 2\left(1+\kappa_{*}^{-a_2}\right)\frac{2^{(n+1)a_2}}{d_0} Z_n\quad \text{for all } n\in\N_0.
	\end{align}
	Applying \eqref{ei}, we deduce from the assumptions on the exponents that
	\begin{align*}
		&\int_{A_{\kappa_{n+1}}} \left[a(x)v_{n+1}^{p(x)}+b(x)\log\left(\delta+\eta v_{n+1}\right)v_{n+1}^{q(x)}\right]\,\mathrm{d} x\\
		& \leq \int_{A_{\kappa_{n+1}}}\left[a(x)^\frac{r(x)}{p(x)}v_{n+1}^{r(x)} +\left(b(x)\log(\delta+\eta v_{n+1})\right)^{\frac{s(x)}{q(x)}}v_{n+1}^{s(x)}+2\right]\,\mathrm{d} x.
	\end{align*}
	Combining this with \eqref{S-Zn.decreasing} and \eqref{S-|A_{k_{n+1}}|} yields
	\begin{align}\label{S-D.u}
		\int_{A_{\kappa_{n+1}}}\mathcal{S}(x,v_{n+1})\,\mathrm{d} x\leq 5\left(1+\kappa_{*}^{-a_2}\right)\frac{2^{(n+1)a_2}}{d_0} Z_n\quad \text{for all }n\in\N_0.
	\end{align}

	Next, we estimate the truncated energies in terms of $Z_n$. To this end, we test \eqref{def_sol_D} with $\varphi =v_{n+1}$. This gives
	\begin{align}\label{S-D.var.Eq}
		\int_{A_{\kappa_{n+1}}}
		\mathcal{A}(x,u,\nabla u) \cdot \nabla u \,\mathrm{d} x =\int_{A_{\kappa_{n+1}} }\mathcal{B}(x,u,\nabla u)v_{n+1}\,\mathrm{d} x.
	\end{align}
	Note that on $A_{\kappa_{n+1}}$ we have $u\geq v_{n+1} >0$, and by \eqref{d0},
	\begin{align*}
		u\leq \frac{1}{d_0}\left[a(x)^{\frac{r(x)}{p(x)}}u^{r(x)}+\left(b(x)\log\big(\delta+\eta u\big)\right)^{\frac{s(x)}{q(x)}}u^{s(x)}\right]+1.
	\end{align*}
	Using these estimates together with inequality \eqref{young}, we derive from \textnormal{(\hyperlink{H2}{H$_2$})}\textnormal{(\hyperlink{H2ii}{ii})} and \textnormal{(\hyperlink{H2}{H$_2$})}\textnormal{(\hyperlink{H2iii}{iii})} that
	\begin{align*}
		&\int_{A_{\kappa_{n+1}}} \mathcal{A}(x,u,\nabla u)\cdot \nabla u\,\mathrm{d} x\\ &\geq \alpha_2\int_{A_{\kappa_{n+1}} }\left[a(x)|\nabla u|^{p(x)}+b(x)|\nabla u|^{q(x)}\log\left(\delta+\eta|\nabla u|\right)\right]\,\mathrm{d} x\\
		& \quad-a_3\int_{A_{\kappa_{n+1}} }\bigg[a(x)^{\frac{r(x)}{p(x)}}u^{r(x)}+\left(b(x)\log\big(\delta+\eta u\big)\right)^{\frac{s(x)}{q(x)}}u^{s(x)}+1\bigg]
	\end{align*}
	and
	\begin{align*}
		& \int_{A_{\kappa_{n+1}}} \mathcal{B}(x,u,\nabla 	u)v_{n+1}\,\mathrm{d} x \\
		& \leq \beta\int_{A_{\kappa_{n+1}} }\bigg[a(x)^{\frac{r(x)}{p(x)}}u^{r(x)-1}+\left(b(x)\log\big(\delta+\eta u\big)\right)^{\frac{s(x)}{q(x)}}u^{s(x)-1}\\
		&\qquad\qquad\qquad+a(x)^{\frac{1}{p(x)}+\frac{1}{r'(x)}}|\nabla u|^{\frac{p(x)}{r'(x)}} \\
		&\qquad\qquad\qquad+\left(b(x)\log\left(\delta+\eta u\right)\right)^{\frac{1}{q(x)}}\left(b(x)\log\big(\delta+\eta|\nabla u|\big)\right)^{\frac{1}{s'(x)}}|\nabla u|^{\frac{q(x)}{s'(x)}}+1\bigg]u\,\mathrm{d} x    \\
		& \leq \frac{\alpha_2}{2}\int_{A_{\kappa_{n+1}} }\left[a(x)|\nabla u|^{p(x)}+b(x)|\nabla u|^{q(x)}\log\big(\delta+\eta|\nabla u|\big)\right]\,\mathrm{d} x   \\
		& \qquad\qquad\qquad\quad+ C_1\int_{A_{\kappa_{n+1}} }\left[a(x)^{\frac{r(x)}{p(x)}}u^{r(x)} +\left(b(x)\log\big(\delta+\eta u\big)\right)^{\frac{s(x)}{q(x)}}u^{s(x)}+1\right]\,\mathrm{d} x.
	\end{align*}
	Here, and throughout the rest of the proof, $C_i$ ($i\in\N$) denotes a positive constant that is independent of $u$, $n$, and $\kappa_{*}$. Plugging the last two estimates into \eqref{S-D.var.Eq}, and then using \eqref{S-est.u} and \eqref{ei-2},  we obtain
	\begin{align*}
		& \int_{A_{\kappa_{n+1}} }\left[a(x)|\nabla u|^{p(x)}+b(x)|\nabla u|^{q(x)}\log\left(\delta+\eta|\nabla u|\right)\right]\,\mathrm{d} x \\
		& \leq C_2\int_{A_{\kappa_{n+1}} }\left[a(x)^{\frac{r(x)}{p(x)}}u^{r(x)}+\left(b(x)\log\big(\delta+\eta u\big)\right)^{\frac{s(x)}{q(x)}}u^{s(x)}+1\right]\,\mathrm{d} x  \\
		& \leq C_2\int_{A_{\kappa_{n+1}}}\Bigg[a(x)^{\frac{r(x)}{p(x)}}\left((2^{n+2}-1)v_n\right)^{r(x)}\\
		&\qquad\qquad\qquad\quad+\left(b(x)\log\big(\delta+\eta (2^{n+2}-1)v_n\big)\right)^{\frac{s(x)}{q(x)}} \left((2^{n+2}-1)v_n\right)^{s(x)}\Bigg]\,\mathrm{d} x +C_2|A_{\kappa_{n+1}}| \\
		& \leq C_32^{2(r^++s^+)n}\int_{A_{\kappa_{n}}}\left[a(x)^\frac{r(x)}{p(x)}v_n^{r(x)}+\left(b(x)\log(\delta+\eta v_n)\right)^{\frac{s(x)}{q(x)}}v_n^{s(x)}\right]\,\mathrm{d} x+C_2|A_{\kappa_{n+1}}|.
	\end{align*}
	This and \eqref{S-|A_{k_{n+1}}|} yield \begin{align}\label{S-D.grad}
		\int_{A_{\kappa_{n+1}}}\mathcal{S}(x,|\nabla u|)\,\mathrm{d} x\leq  C_4(1+\kappa_{*}^{-a_2})2^{2(r^++s^+)n}Z_n
		\quad\text{for all } n\in\N_0.
	\end{align}

	\textbf{Step 2: Estimating $Z_{n+1}$ in terms of $Z_n$.}

	Let $\{B_i\}_{i=1}^m$ be a finite open covering of $\overline{\Omega}$, where $B_i$ ($i\in \{1,\cdots,m\}$) are open balls of radius $R$ in $\R^N$ such that $\Omega_i:=B_i\cap\Omega$ ($i\in \{1,\cdots,m\}$) are Lipschitz domains, see, for example, the book of Carbone--De Arcangelis \cite[Proposition 2.5.4]{Carbone-DeArcangelis-2002}. We may choose $R$ sufficiently small such that
	\begin{align}\label{S-D.Omega_i}
		|\Omega_{i}|<1\quad \text{for all } i\in\{1,\cdots,m\},
	\end{align}
	and
	\begin{align}\label{S-D.loc.exp}
		q_i^+<r^{-}_{i}\leq r_i^+<\left(p^*\right)^-_i\quad \text{and} \quad q_i^+<s_i^-\leq s_i^+<\left(q^*\right)^-_i\quad \text{for all } i\in\{1,\cdots,m\},
	\end{align}
	where, for a function $f\in C\left(\overline{\Omega}\right)$ and $i\in\{1,\cdots,m\}$, we denote
	\begin{align*}
		f_i^+:=\max_{x\in\overline{\Omega}_i} f(x)
		\quad \text{and} \quad f^{-}_{i}:=\min_{x\in\overline{\Omega}_i} f(x).
	\end{align*}
	Let $n\in\N_0$. For each $i\in\{1,\cdots,m\}$, $\hat{\alpha}>0$, and $\hat{\beta}>0$, we define
	\begin{align*}
		T_{n,i}(\hat{\alpha},\hat{\beta}):=\int_{\Omega_{i}}\left[a(x)^{\frac{\hat{\alpha}}{p(x)}}v_{n+1}^{\hat{\alpha}}+\left(b(x)\log(\delta+\eta v_{n+1})\right)^{\frac{\hat{\beta}}{q(x)}}v_{n+1}^{\hat{\beta}}\right]\,\mathrm{d} x
	\end{align*}
	and
	\begin{align*}
		m_i=\min\{r_i^-,s_i^-\},\ \ n_i=\max\{r_i^+,s_i^+\}.
	\end{align*}
	We have
	\begin{align*}
		Z_{n+1} & =\int_{\Omega}\left[a(x)^\frac{r(x)}{p(x)}v_{n+1}^{r(x)}+\left(b(x)\log(\delta+\eta v_{n+1})\right)^{\frac{s(x)}{q(x)}}v_{n+1}^{s(x)}\right]\,\mathrm{d} x \\
		& \leq \sum_{i=1}^m\int_{\Omega_{i}}\left[a(x)^\frac{r(x)}{p(x)}v_{n+1}^{r(x)}+\left(b(x)\log(\delta+\eta v_{n+1})\right)^{\frac{s(x)}{q(x)}}v_{n+1}^{s(x)}\right]\,\mathrm{d} x.
	\end{align*}
	From this and \eqref{ei} it follows that
	\begin{align}\label{S-decompose1}
		Z_{n+1}\leq \sum_{i=1}^m\big[T_{n,i}(r_i^-,s_i^-)+T_{n,i}(r_i^+,s_i^+)\big].
	\end{align}
	From \eqref{S-D.loc.exp}, we can fix $\eps$ so small  that
	\begin{align}\label{S-D-eps}
		0<\eps<\min_{1\leq i\leq m} \min\{\left(p^*\right)^-_i-r_i^+, \left(q^*\right)^-_i-s_i^+,m_i-q_i^+\}.
	\end{align}

	Let $i\in\{1,\cdots,m\}$ and let $\star\in\{+,-\}$. By H\"{o}lder's inequality, we have
	\begin{align*}
		\int_{A_{\kappa_{n+1}}\cap \Omega_i}a(x)^{\frac{r_i^\star}{p(x)}}v_{n+1}^{r_i^\star}\,\mathrm{d} x
		\leq \left(\int_{\Omega_{i}}a(x)^{\frac{r_i^\star+\eps}{p(x)}}v_{n+1}^{r_i^\star+\eps}\,\mathrm{d} x\right)^{\frac{r_i^\star}{r_i^\star+\eps}}|A_{\kappa_{n+1}}\cap \Omega_i|^{\frac{\eps}{r_i^\star+\eps}}
	\end{align*}
	and
	\begin{align*}
		&\int_{A_{\kappa_{n+1}}\cap \Omega_i} \left(b(x)\log(\delta+\eta v_{n+1})\right)^{\frac{s_i^\star}{q(x)}}v_{n+1}^{s_i^\star}\,\mathrm{d} x\\
		& \leq \left(\int_{\Omega_{i}}\left(b(x)\log(\delta+\eta v_{n+1})\right)^{\frac{s_i^\star+\eps}{q(x)}}v_{n+1}^{s_i^\star+\eps}\,\mathrm{d} x\right)^{\frac{s_i^\star}{s_i^\star+\eps}}|A_{\kappa_{n+1}}\cap \Omega_i|^{\frac{\eps}{s_i^\star+\eps}}.
	\end{align*}
	From the last two estimates and \eqref{S-D.Omega_i}, it follows that
	\begin{equation}\label{S-de-E1}
		\begin{aligned}
			& T_{n,i}(r_i^\star,s_i^\star)\\
			&=\int_{A_{\kappa_{n+1}}\cap \Omega_i}\left[a(x)^{\frac{r_i^\star}{p(x)}}v_{n+1}^{r_i^\star}+\left(b(x)\log(\delta+\eta v_{n+1})\right)^{\frac{s_i^\star}{q(x)}}v_{n+1}^{s_i^\star}\right]\,\mathrm{d} x \\
			& \leq 2|A_{\kappa_{n+1}}\cap \Omega_i|^{\frac{\eps}{r^++s^++\eps}}\left[\varrho_{\Psi_{\star},\Omega_i}(v_{n+1})^{\frac{m_i}{m_i+\eps}}+\varrho_{\Psi_{\star},\Omega_i}(v_{n+1})^{\frac{n_i}{n_i+\eps}}\right],
		\end{aligned}
	\end{equation}
	where
	\begin{equation*}
		\Psi_{\star}(x,t):=a(x)^{\frac{r_i^\star+\eps}{p(x)}}t^{r_i^\star+\eps}+\left(b(x)\log(\delta+\eta t)\right)^{\frac{s_i^\star+\eps}{q(x)}}t^{s_i^\star+\eps}.
	\end{equation*}
	Invoking Propositions~\ref{P.aCon} and \ref{emb-main} together with \eqref{ei}, we have
	\begin{equation}\label{S-de-E2}
		\begin{aligned}
		\varrho_{\Psi_{\star},\Omega_i}(v_{n+1})^{\frac{m_i}{m_i+\eps}}+\varrho_{\Psi_{\star},\Omega_i}(v_{n+1})^{\frac{n_i}{n_i+\eps}}&\leq  C_5\left(\|v_{n+1}\|_{L^{\Psi_{\star}}(\Omega_i)}^{m_i}+\|v_{n+1}\|_{L^{\Psi_{\star}}(\Omega_i)}^{\frac{n_i(n_i+2\eps)}{n_i+\eps}}\right)\\
		&\leq C_6\left(\|v_{n+1}\|_{W^{1,\mathcal{S}}  (\Omega_i)}^{m_i}+\|v_{n+1}\|_{W^{1,\mathcal{S}} (\Omega_i)}^{\frac{n_i(n_i+2\eps)}{n_i+\eps}}\right).
	\end{aligned}
	\end{equation}
	We set
		\begin{align*}
			R_{n,i}:=\int_{\Omega_i}\big[\mathcal{S}\left(x,|\nabla v_{n+1}|\right)+\mathcal{S}\left(x, v_{n+1}\right)\big]\,\mathrm{d} x.
		\end{align*}
	Applying Proposition \ref{P.nor-mod-S} to the case $\Omega=\Omega_i$ and using \eqref{ei} again, we obtain
	\begin{equation}\label{S-de-E2'}
					\|v_{n+1}\|_{W^{1,\mathcal{S}} (\Omega_i)}^{m_i}+\|v_{n+1}\|_{W^{1,\mathcal{S}} (\Omega_i)}^{\frac{n_i(n_i+2\eps)}{n_i+\eps}}\leq C_{7}\left(R_{n,i}^{\frac{m_i}{q_i^++\eps}}+R_{n,i}^{\frac{n_i(n_i+2\eps)}{p_i^-(n_i+\eps)}}\right).
	\end{equation}
	It follows from \eqref{S-D-eps}, \eqref{S-de-E1}, \eqref{S-de-E2}, and \eqref{S-de-E2'} that
		\begin{align}\label{S-de-E5}
		T_{n,i}(r_i^\star,s_i^\star)\leq C_{8}|A_{\kappa_{n+1}}|^{\frac{\eps}{r^++s^++\eps}}\left(R_{n}^{1+\gamma_1}+R_{n}^{1+\gamma_2}\right),
	\end{align}
	where
	\begin{align*}
		R_{n}:=\int_{\Omega}\big[\mathcal{S}\left(x,|\nabla v_{n+1}|\right)+\mathcal{S}\left(x, v_{n+1}\right)\big]\,\mathrm{d} x
	\end{align*}
	and
	\begin{align*}
		0<\gamma_1:=\min_{1\leq i\leq m} \frac{m_i}{q_i^++\eps}-1<\gamma_2:=\max_{1\leq i\leq m} \frac{n_i(n_i+2\eps)}{p_i^-(n_i+\eps)}-1.
	\end{align*}
	Note that \eqref{S-D.u} and \eqref{S-D.grad} imply
	\begin{align*}
		R_n\leq C_{9}\left(1+\kappa_{*}^{-a_2}\right)2^{2(r^++s^+)n}Z_n\quad \text{for all } n\in\N_0.
	\end{align*}
	Consequently,
	\begin{align}\label{S-de-E7}
		R_{n}^{1+\gamma_1}+R_{n}^{1+\gamma_2}\leq C_{10}\left(1+\kappa_{*}^{-a_2(1+\gamma_2)}\right)2^{2(r^++s^+)(1+\gamma_2)n}\left(Z_{n}^{1+\gamma_1}+Z_{n}^{1+\gamma_2}\right).
	\end{align}
	On the other hand, \eqref{S-|A_{k_{n+1}}|} yields
	\begin{align}\label{S-de-E8}
		|A_{\kappa_{n+1}}|^{\frac{\eps}{r^++s^++\eps}}\leq C_{11}\left(\kappa_{*}^{-\frac{\eps a_1}{r^++s^++\eps}}+\kappa_{*}^{-\frac{\eps a_2}{r^++s^++\eps}}\right)2^{\frac{\eps a_2}{r^++s^++\eps}n} Z_n^{\frac{\eps}{r^++s^++\eps}}.
	\end{align}
	Plugging \eqref{S-de-E7} and \eqref{S-de-E8} into \eqref{S-de-E5}, we obtain
	\begin{align}\label{S-de-E9}
		T_{n,i}(r_i^\star,s_i^\star)\leq
		C_{12}\round{\kappa_{*}^{-b_1}+\kappa_{*}^{-b_2}}b^n\round{Z_{n}^{1+\delta_1}+Z_{n}^{1+\delta_2}},
	\end{align}
	where
	\begin{align*}%\label{S-mu}
		0 & <b_1:=\frac{\eps a_1}{r^++s^++\eps}<b_2:=a_2(1+\gamma_2)+\frac{\eps a_2}{r^++s^++\eps}, \\
		1 & <b:=2^{2(r^++s^+)(1+\gamma_2)+\frac{\eps a_2}{r^++s^++\eps}}
	\end{align*}
	and
	\begin{align*}%\label{S-delta}
		0<\delta_1:=\gamma_1+\frac{\eps}{r^++s^++\eps}\leq \delta_2:=\gamma_2+\frac{\eps}{r^++s^++\eps}.
	\end{align*}
	Finally, we make use of \eqref{S-de-E9} to derive from \eqref{S-decompose1} that
	\begin{align}\label{S-D-ReIneq}
		Z_{n+1}
		\leq
		C_{13}\round{\kappa_{*}^{-b_1}+\kappa_{*}^{-b_2}}b^n\round{Z_{n}^{1+\delta_1}+Z_{n}^{1+\delta_2}}\quad \text{for all } n\in\mathbb{N}_0.
	\end{align}

	\textbf{Step 3: A priori bounds.}

	It is clear that \eqref{D-bound} holds in the case $u=0$. Let us now consider the case $u\ne 0$, which is equivalent to $\int_{\Omega}\Psi(x,|u|)\,\mathrm{d} x>0$. In this case, we employ an argument similar to that used by Ho--Kim \cite[Proof of Theorem 4.2]{Ho-Kim-2019} to derive \eqref{D-bound} from \eqref{S-D-ReIneq}. Indeed,  by Lemma \ref{leRecur}, we have
	\begin{align}\label{apply.le.1}
		Z_n\to 0\quad \text{as } n\to \infty
	\end{align}
	provided that
	\begin{align}\label{apply.le.2}
		Z_0 \leq \min\left\{\left(2C_{13}\round{\kappa_{*}^{-b_1}+\kappa_{*}^{-b_2}}\right)^{-\frac{1}{\delta_1}}b^{-\frac{1}{\delta_1^2}}, \left(2C_{13}\round{\kappa_{*}^{-b_1}+\kappa_{*}^{-b_2}}\right)^{-\frac{1}{\delta_2}}b^{-\frac{1}{\delta_1\delta_2}-\frac{\delta_2-\delta_1}{\delta_2^2}}\right\}.
	\end{align}
	We note that
	\begin{align}\label{apply.le.2'new}
		Z_0= \int_{\Omega}\Psi(x,(u-\kappa_{*})_+)\,\mathrm{d} x\le
		\int_{\Omega}\Psi(x,|u|)\,\mathrm{d} x,
	\end{align}
	and that
	\begin{align}\label{apply.le.3}
		\begin{cases}
			\int_{\Omega}\Psi(x,|u|)\,\mathrm{d} x \leq (2C_{13})^{-\frac{1}{\delta_1}}(\kappa_{*}^{-b_1}+\kappa_{*}^{-b_2})^{-\frac{1}{\delta_1}}b^{-\frac{1}{\delta_1^2}}, \\
			\int_{\Omega}\Psi(x,|u|)\,\mathrm{d} x \leq
			(2C_{13})^{-\frac{1}{\delta_2}}(\kappa_{*}^{-b_1}+\kappa_{*}^{-b_2})^{-\frac{1}{\delta_2}}b^{-\frac{1}{\delta_1\delta_2}-\frac{\delta_2-\delta_1}{\delta_2^2}}
		\end{cases}
	\end{align}
	is equivalent to
	\begin{align}\label{apply.le.3'}
		\begin{cases}
			\kappa_{*}^{-b_1}+\kappa_{*}^{-b_2} \leq (2C_{13})^{-1}b^{-\frac{1}{\delta_1}}\round{\int_{\Omega}\Psi(x,|u|)\,\mathrm{d} x}^{-\delta_1}, \\
			\kappa_{*}^{-b_1}+\kappa_{*}^{-b_2} \leq (2C_{13})^{-1}b^{-\frac{1}{\delta_1}-\frac{\delta_2-\delta_1}{\delta_2}}\round{\int_{\Omega}\Psi(x,|u|)\,\mathrm{d} x}^{-\delta_2}.
		\end{cases}
	\end{align}
	Clearly, \eqref{apply.le.3'} holds provided that
	\begin{align*}
		\begin{cases}
			2\kappa_{*}^{-b_1} \leq (2C_{13})^{-1}b^{-\frac{1}{\delta_1}-\frac{\delta_2-\delta_1}{\delta_2}}\min\left\{\round{\int_{\Omega}\Psi(x,|u|)\,\mathrm{d} x}^{-\delta_1}, \round{\int_{\Omega}\Psi(x,|u|)\,\mathrm{d} x}^{-\delta_2}\right\}, \\
			2\kappa_{*}^{-b_2} \leq (2C_{13})^{-1}b^{-\frac{1}{\delta_1}-\frac{\delta_2-\delta_1}{\delta_2}}\min\left\{\round{\int_{\Omega}\Psi(x,|u|)\,\mathrm{d} x}^{-\delta_1},
			\round{\int_{\Omega}\Psi(x,|u|)\,\mathrm{d} x}^{-\delta_2}\right\},
		\end{cases}
	\end{align*}
	which is equivalent to
	\begin{align}\label{apply.le.4}
		\begin{cases}
			\kappa_{*} \geq (4C_{13})^{\frac{1}{b_1}}b^{\frac{1}{b_1}\round{\frac{1}{\delta_1}+\frac{\delta_2-\delta_1}{\delta_2}}}\max\left\{\round{\int_{\Omega}\Psi(x,|u|)\,\mathrm{d} x}^{\frac{\delta_1}{b_1}}, \round{\int_{\Omega}\Psi(x,|u|)\,\mathrm{d} x}^{\frac{\delta_2}{b_1}}\right\}, \\
			\kappa_{*} \geq (4C_{13})^{\frac{1}{b_2}}b^{\frac{1}{b_2}\round{\frac{1}{\delta_1}+\frac{\delta_2-\delta_1}{\delta_2}}}\max\left\{\round{\int_{\Omega}\Psi(x,|u|)\,\mathrm{d} x}^{\frac{\delta_1}{b_2}},
			\round{\int_{\Omega}\Psi(x,|u|)\,\mathrm{d} x}^{\frac{\delta_2}{b_2}}\right\}.
		\end{cases}
	\end{align}
	Therefore, choosing
	\begin{align*}
		\kappa_{*}&=\max\left\{(4C_{13})^{\frac{1}{b_1}},(4C_{13})^{\frac{1}{b_2}}\right\}b^{\frac{1}{b_1}\round{\frac{1}{\delta_1}+\frac{\delta_2-\delta_1}{\delta_2}}}\\
		&\qquad\times\max\left\{\round{\int_{\Omega}\Psi(x,|u|)\,\mathrm{d} x}^{\frac{\delta_1}{b_2}},
		\round{\int_{\Omega}\Psi(x,|u|)\,\mathrm{d} x}^{\frac{\delta_2}{b_1}}\right\},
	\end{align*}
	we obtain \eqref{apply.le.4}. Hence, \eqref{apply.le.2} follows from \eqref{apply.le.2'new} and \eqref{apply.le.3}. Consequently, \eqref{apply.le.1} holds. Moreover, by Lebesgue's dominated convergence theorem,
	\begin{align*}
		Z_n=\int_{\Omega}\Psi(x,v_n)\,\mathrm{d} x\to \int_{\Omega}\Psi(x,(u-2\kappa_{*})_+)\,\mathrm{d} x
		\quad \text{as }  n\to \infty.
	\end{align*}
	Consequently, we obtain
	\begin{align*}
		\int_{\Omega}\Psi(x,(u-2\kappa_{*})_+)\,\mathrm{d} x=0.
	\end{align*}
	Equivalently,
	\begin{align*}
		\esssup_{x\in\Omega} u(x)\leq 2\kappa_{*}.
	\end{align*}
	Replacing $u$ by $-u$ in the above arguments, we obtain
	\begin{align*}
		\esssup_{x\in\Omega} (-u)(x)\leq 2\kappa_{*}.
	\end{align*}
	Therefore,
	\begin{align}\label{applying.le.5}
		\|u\|_{L^\infty(\Omega)}\le C_{14}\max\left\{\round{\int_{\Omega}\Psi(x,|u|)\,\mathrm{d} x}^{\frac{\delta_1}{b_2}}, \round{\int_{\Omega}\Psi(x,|u|)\,\mathrm{d} x}^{\frac{\delta_2}{b_1}}\right\}.
	\end{align}
Applying Proposition \ref{P.aCon} again, we derive \eqref{D-bound} from \eqref{applying.le.5}. The proof is complete.
\end{proof}

%********************************************************************
\section{Boundedness results in the critical case}\label{Section-4}
%********************************************************************

In this section, we discuss the boundedness of weak solutions to problem \eqref{D} when the nonlinearities exhibit critical growth in the sense of Proposition \ref{emb-main} under the assumptions \textnormal{(\hyperlink{H0}{H$_0$})}, \textnormal{(\hyperlink{H1}{H$_1$})}, and \textnormal{(\hyperlink{H3}{H$_3$})}. The proof of Theorem \ref{CD.boundedness} uses ideas based on the works by Ho--Kim--Winkert--Zhang \cite{Ho-Kim-Winkert-Zhang-2022} and Ho--Winkert \cite{Ho-Winkert-2023}.

\begin{proof}[Proof of Theorem \ref{CD.boundedness}]
	Let $u$ be a weak solution of problem \eqref{D}. We consider a covering of $\overline{\Omega}$ consisting of balls $\{B_i\}_{i=1}^m$ of radius $R$ such that each $\Omega_i:=B_i\cap\Omega$ ($i=1,\cdots,m$) is a Lipschitz domain. By \textnormal{(\hyperlink{H1}{H$_1$})}\textnormal{(\hyperlink{H1ii}{ii})}, we have $q(x)<p^*(x)$ for all $x\in\overline{\Omega}$. Therefore, by taking the radius $R$ smaller if necessary, we obtain
	\begin{align*}
		q_i^+:=\max_{x\in \overline{\Omega}_i} q(x)<{(p^*)}^{-}_{i}:=\min_{x\in \overline{\Omega}_i} p^*(x)\quad \text{for all } i\in\{1,\cdots,m\}.
	\end{align*}
	Hence, we have
	\begin{align}\label{D.loc.exp}
		\varepsilon:=\frac{1}{2}\min_{1\leq i\leq m} \left((p^*)_i^--q_i^+\right)>0.
	\end{align}
	Let $\kappa_{*}\geq 1$ be sufficiently large such that
	\begin{align}\label{k*}
		\int_{A_{\kappa_{*}}}\mathcal{S}(x,|\nabla u|)\,\mathrm{d} x+\int_{A_{\kappa_{*}}}\mathcal{S}(x,|u|)\,\mathrm{d} x+\int_{A_{\kappa_{*}}}\mathcal{S}^*(x,|u|)\,\mathrm{d} x<1,
	\end{align}
	where $A_\kappa$ for  $\kappa\in\R$, is defined in \eqref{def.Ak}, and $\mathcal{S}$ and $\mathcal{S}^*$ are given in \eqref{S} and \eqref{S*}. Then, let $\{\kappa_{n}\}_{n\in\N_0}$ and $\{v_{n}\}_{n\in\N_0}$ be as in \eqref{def.kn} and \eqref{def.vn}, respectively. Moreover, we define
	\begin{align}
		\label{Zn.def}
		Z_n:=\int_{A_{\kappa_{n}}}\mathcal{S}(x,|\nabla u|)\,\mathrm{d} x+\int_{A_{\kappa_{n}} }\mathcal{S}^*(x,v_{n})\,\mathrm{d} x.
	\end{align}
	Similarly to Section \ref{Section-3}, we easily obtain
	\begin{align}\label{C.Zn.dec}
		Z_{n+1}\leq Z_n\quad\text{for all } n\in \mathbb{N}_0
	\end{align}
	and
	\begin{align} \label{C.|A_{k_{n+1}}|}
		|A_{\kappa_{n+1}}| \leq \frac{2^{2(n+1)(q^*)^+}}{d_1\kappa_{*}^{(p^*)^{-}}} Z_n\leq \frac{2^{2(n+1)(q^*)^+}}{d_1} Z_n\quad \text{for all } n\in\N_0,
	\end{align}
	where
	\begin{align}\label{H2new}
		d_1:=\min\left\{1,\left(\frac{d}{2}\right)^{\left(\frac{p^*}{p}\right)^+},\left(\frac{d}{2}\log\left(\delta+\eta\right)\right)^{\left(\frac{q^*}{q}\right)^+}\right\}>0.
	\end{align}
	In the following, we will establish recursion inequalities for $\{Z_n\}_{n\in\N_0}$, which will then imply the desired conclusion by Lemma \ref{leRecur}. As before, throughout the remainder of the proof, the symbols $C_i$ ($i\in\N$) denote positive constants that are independent of $n$ and $\kappa_{*}$.

	\textbf{Claim 1:} There exists a constant $\mu>0$ such that
	\begin{align*}%\label{u^p^*}
		\int_{A_{\kappa_{n+1}} }\mathcal{S}^*(x,v_{n+1})\,\mathrm{d} x\leq  C_1 2^{\frac{2(p^*)^+(q^*)^+}{q^-}n}Z_{n}^{1+\mu}\quad  \text{for all } n\in\N_0.
	\end{align*}
	Indeed, we have
	\begin{align}\label{decompose1}
		\int_{A_{\kappa_{n+1}} }\mathcal{S}^*(x,v_{n+1})\,\mathrm{d} x=\int_{\Omega}\mathcal{S}^*(x,v_{n+1})\,\mathrm{d} x\leq \sum_{i=1}^m\int_{\Omega_{i}}\mathcal{S}^*(x,v_{n+1})\,\mathrm{d} x.
	\end{align}
	Let $i\in\{1,\cdots,m\}$. From \eqref{k*} and the relation between the norm and the modular (see Proposition \ref{P.nor-mod-S}), it follows that
	\begin{align*}
		\int_{\Omega_{i}}\mathcal{S}^*(x,v_{n+1})\,\mathrm{d} x\leq \|v_{n+1}\|_{\mathcal{S}^*,\Omega_{i}}^{(p^*)_i^-}.
	\end{align*}
	Applying Proposition \ref{emb-main} with $\Omega=\Omega_{i}$ yields
	\begin{align*}
		\int_{\Omega_{i}}\mathcal{S}^*(x,v_{n+1})\,\mathrm{d} x\leq C_2\|v_{n+1}\|_{W^{1,\mathcal{S}}(\Omega_{i})}^{(p^*)_i^-}.
	\end{align*}
	Next, applying Proposition \ref{P.nor-mod-S} and recalling \eqref{k*}, we arrive at
	\begin{align*}
		\int_{\Omega_{i}}\mathcal{S}^*(x,v_{n+1})\,\mathrm{d} x
		& \leq C_3\bigg(\int_{\Omega_{i}}\mathcal{S}(x,|\nabla v_{n+1}|)\,\mathrm{d} x+\int_{\Omega_{i}}\mathcal{S}(x,v_{n+1})\,\mathrm{d} x\bigg)^{\frac{(p^*)_i^-}{q_i^++\eps}} \\
		& \leq C_4\left(\int_{A_{\kappa_{n+1}}}\mathcal{S}(x,|\nabla v_{n+1}|)\,\mathrm{d} x+\int_{A_{\kappa_{n+1}}}\mathcal{S}^*(x,v_{n+1})\,\mathrm{d} x+|A_{\kappa_{n+1}}|\right)^{\frac{(p^*)_i^-}{q_i^++\eps}},
	\end{align*}
	where $\eps$ is given in \eqref{D.loc.exp}. Combining this estimate with \eqref{Zn.def}, \eqref{C.Zn.dec}, and \eqref{C.|A_{k_{n+1}}|}, we find that
	\begin{align*}%\label{grad3}
		\int_{\Omega_{i}}\mathcal{S}^*(x,v_{n+1})\,\mathrm{d} x
		 & \leq C_52^{\frac{2n(q^*)^+(p^*)_i^-}{q_i^+}}Z_{n}^{\frac{(p^*)_i^-}{q_i^++\eps}}.
	\end{align*}
	Substituting this estimate into \eqref{decompose1}, we deduce that
	\begin{align*}
		\int_{A_{\kappa_{n+1}}}\mathcal{S}^*(x,v_{n+1})\,\mathrm{d} x\leq C_62^{\frac{2n(p^*)^+(q^*)^+}{q^-}}Z_{n}^{1+\mu},
	\end{align*}
	where
	\begin{align*}
		0<\mu:=\min_{1\leq i\leq m} \frac{(p^*)_i^-}{q_i^++\eps}-1
	\end{align*}
	in view of \eqref{D.loc.exp}. This establishes Claim 1.

	{\bf Claim 2:}
	It holds that
	\begin{align*}%\label{D.grad}
		\int_{A_{\kappa_{n+1}}}\mathcal{S}(x,|\nabla u|)\,\mathrm{d} x\leq  C_7 2^{\l[\frac{2(p^*)^+(q^*)^+}{q^-}+2(q^*)^+\r]n}Z_{n-1}^{1+\mu}\quad \text{for all } n\in\N.
	\end{align*}
	To this end, we test  \eqref{def_sol_D} with $\varphi =v_{n+1} \in W_0^{1,\mathcal{S}}(\Omega)$, which gives
	\begin{align}\label{D.var.Eq}
		\int_{A_{\kappa_{n+1}}}
		\mathcal{A}(x,u,\nabla u) \cdot \nabla u\,\mathrm{d} x =\int_{A_{\kappa_{n+1}} }\mathcal{B}(x,u,\nabla u)v_{n+1}\,\mathrm{d} x.
	\end{align}
	On $A_{\kappa_{n+1}}$, we have $u\geq v_{n+1} >0$, $u> \kappa_{n+1} \geq 1$, and
	\begin{align*}
		\mathcal{S}^*(x,u)\geq d_1u\geq d_1,
	\end{align*}
	where $d_1$ is given in \eqref{H2new}. Using these estimates and \eqref{young}, we infer from \textnormal{(\hyperlink{H3}{H$_3$})}\textnormal{(\hyperlink{H3ii}{ii})} and \textnormal{(\hyperlink{H3}{H$_3$})}\textnormal{(\hyperlink{H3iii}{iii})}  that
	\begin{equation}\label{D-C-E1}
		\begin{aligned}
			&\int_{A_{\kappa_{n+1}}} \mathcal{A}(x,u,\nabla u)\cdot \nabla u\,\mathrm{d} x\\ & \geq \alpha_2\int_{A_{\kappa_{n+1}} }\mathcal{S}(x,|\nabla u|)\,\mathrm{d} x-a_3\int_{A_{\kappa_{n+1}} }\left[\mathcal{S}^*(x,u)+1\right]\,\mathrm{d} x  \\
			& \geq \alpha_2\int_{A_{\kappa_{n+1}} }\mathcal{S}(x,|\nabla u|)\,\mathrm{d} x-\frac{a_3(d_1+1)}{d_1}\int_{A_{\kappa_{n+1}} }\mathcal{S}^*(x,u)\,\mathrm{d} x
		\end{aligned}
	\end{equation}
	and
	\begin{equation}\label{D-C-E2}
		\begin{aligned}
			& \int_{A_{\kappa_{n+1}}} \mathcal{B}(x,u,\nabla 	u)v_{n+1}\,\mathrm{d} x \\
			& \leq \beta\int_{A_{\kappa_{n+1}} }\bigg[a(x)^{\frac{p^*(x)}{p(x)}}u^{p^*(x)-1}\\
			&\qquad\qquad +(b(x)\log(\delta+\eta u))^{\frac{q^*(x)}{q(x)}}u^{q^*(x)-1}+a(x)^{\frac{N+1}{N}}|\nabla u|^{\frac{p(x)}{(p^*)'(x)}} \\
			& \qquad\qquad+\left(b(x)\log(\delta+\eta u)\right)^{\frac{1}{q(x)}}\left(b(x)\log\big(\delta+\eta|\nabla u|\big)\right)^{\frac{1}{(q^*)'(x)}}|\nabla u|^{\frac{q(x)}{(q^*)'(x)}}+1\bigg]u\,\mathrm{d} x  \\
			& \leq \frac{\alpha_2}{2}\int_{A_{\kappa_{n+1}} }\mathcal{S}(x,|\nabla u|)\,\mathrm{d} x+C_8\int_{A_{\kappa_{n+1}} }\mathcal{S}^*(x,u)\,\mathrm{d} x.
		\end{aligned}
	\end{equation}
	Combining \eqref{D-C-E1} and \eqref{D-C-E2} with \eqref{D.var.Eq}, and then using \eqref{S-est.u}, we arrive at
	\begin{align*}
		& \int_{A_{\kappa_{n+1}} }\mathcal{S}(x,|\nabla u|)\,\mathrm{d} x \\
		& \leq C_9\int_{A_{\kappa_{n+1}}}\mathcal{S}^*(x,u)\,\mathrm{d} x \\
		& \leq C_9\int_{A_{\kappa_{n+1}}}\Bigg[a(x)^{\frac{p^*(x)}{p(x)}}\left((2^{n+2}-1)v_n\right)^{p^*(x)}\\
		&\qquad\qquad\qquad\qquad+\left(b(x)\log\big(\delta+\eta (2^{n+2}-1)v_n\big)\right)^{\frac{q^*(x)}{q(x)}}\left((2^{n+2}-1)v_n\right)^{q^*(x)}\Bigg]\,\mathrm{d} x.
	\end{align*}
	Using \eqref{ei-2} once more, the last inequality gives
	\begin{align*}
		\int_{A_{\kappa_{n+1}} }\mathcal{S}(x,|\nabla u|)\,\mathrm{d} x\leq C_{10}2^{2(q^*)^+n}\int_{A_{\kappa_{n}}}\mathcal{S}^*(x,v_n)\,\mathrm{d} x.
	\end{align*}
	Applying Claim 1 with $n-1$ in place of $n$, and absorbing the resulting fixed factor into the constant, proves Claim 2.

	Using Claims 1 and 2 along with  \eqref{C.Zn.dec} gives
	\begin{align}\label{Recur}
		Z_{n+1}\leq C_{11} b^nZ_{n-1}^{1+\mu}\quad \text{for all } n\in\N,
	\end{align}
	where $b:=2^{\l[\frac{2(p^*)^+(q^*)^+}{q^-}+2(q^*)^+\r]}>1$. This implies
	\begin{align*}
		Z_{2(n+1)}\leq C_{11} b^{2n+1}Z_{2n}^{1+\mu}\quad \text{for all } n\in\N_0.
	\end{align*}
	Thus, defining $\widetilde{Z}_n:=Z_{2n}$ and $\widetilde{b}:=b^2$, we obtain
	\begin{align}\label{Recur1}
		\widetilde{Z}_{n+1}\leq bC_{11} \widetilde{b}^n\widetilde{Z}_n^{1+\mu} \quad \text{for all } n\in\N_0.
	\end{align}
	In view of Lemma \ref{leRecur} for the case $\mu_1=\mu_2=\mu$,  it follows from \eqref{Recur1} that
	\begin{align}\label{Recur+1}
		Z_{2n}=\widetilde{Z}_n \to 0 \quad \text {as } n\to \infty
	\end{align}
	provided that
	\begin{align}\label{Z_0}
		\widetilde{Z}_{0}\leq (bC_{11})^{-\frac{1}{\mu}}\ \widetilde{b}^{-\frac{1}{\mu^{2}}}.
	\end{align}
	We further infer from \eqref{Recur} that
	\begin{align*}
		Z_{2(n+1)+1}\leq C_{11} b^{2(n+1)}Z_{2n+1}^{1+\mu}\quad \text{for all } n\in\N_0,
	\end{align*}
	which can be rewritten as
	\begin{align}\label{Recur2}
		\bar{Z}_{n+1}\leq \widetilde{b}C_{11} \widetilde{b}^n\bar{Z}_n^{1+\mu} \quad \text{for all } n\in\N_0,
	\end{align}
	where $\bar{Z}_n:=Z_{2n+1}$ and again $\widetilde{b}:=b^2$.  Applying Lemma \ref{leRecur} once more to \eqref{Recur2}, we conclude that
	\begin{align}\label{Recur+2}
		Z_{2n+1}=\bar{Z}_n \to 0 \quad
		\text {as } n\to \infty
	\end{align}
	provided that
	\begin{align}\label{Z_0'}
		\bar{Z}_{0}\leq (\widetilde{b}C_{11})^{-\frac{1}{\mu}}\ \widetilde{b}^{-\frac{1}{\mu^{2}}}.
	\end{align}
	It is clear that
	\begin{align*}
		\bar{Z}_{0}=Z_1\leq Z_0=\widetilde{Z}_0\leq \int_{A_{\kappa_{*}}}\mathcal{S}(x,|\nabla u|)\,\mathrm{d} x+\int_{A_{\kappa_{*}}}\mathcal{S}^*(x,u)\,\mathrm{d} x.
	\end{align*}
	Therefore, choosing $\kappa_{*}>1$ sufficiently large, we ensure that
	\begin{align*}
		\int_{A_{\kappa_{*}}}\mathcal{S}(x,|\nabla u|)\,\mathrm{d} x+\int_{A_{\kappa_{*}}}\mathcal{S}(x,|u|)\,\mathrm{d} x+\int_{A_{\kappa_{*}}}\mathcal{S}^*(x,u)\,\mathrm{d} x\leq \min\left\{1,(\widetilde{b}C_{11})^{-\frac{1}{\mu}}\ \widetilde{b}^{-\frac{1}{\mu^{2}}}\right\}.
	\end{align*}
	Consequently, \eqref{k*}, \eqref{Z_0} and \eqref{Z_0'} are satisfied. Hence, \eqref{Recur+1} and \eqref{Recur+2} follow, namely,
	\begin{align*}
		Z_n=\int_{A_{\kappa_{n}}}\mathcal{S}(x,|\nabla u|)\,\mathrm{d} x+\int_{A_{\kappa_{n}} }\mathcal{S}^*(x,u-\kappa_{n})\,\mathrm{d} x\to 0\quad  \text{as } n\to\infty.
	\end{align*}
	As a consequence,
	\begin{align*}
		\int_{\Omega}\mathcal{S}^*(x,(u-2\kappa_{*})_+)\,\mathrm{d} x=0.
	\end{align*}
	Therefore, $(u-2\kappa_{*})_{+}=0$ a.e.\,in $\Omega$ and hence
	\begin{align*}
		\esssup_{\Omega} u \leq 2\kappa_*.
	\end{align*}
	Replacing $u$ by $-u$ in the above argument, we likewise obtain
	\begin{align*}
		\esssup_{\Omega} (-u) \leq 2\kappa_*.
	\end{align*}
	Combining the two estimates, we conclude that
	\begin{align*}
		\|u\|_{L^\infty(\Omega)}\leq 2\kappa_{*}.
	\end{align*}
	The proof is complete.
\end{proof}

%********************************************************************
\section{H\"{o}lder continuity}\label{Holder}
%********************************************************************

In this section, we present the proof of Theorem \ref{Th.H} by combining ideas from Lady{\v{z}}enskaja--Ural{\cprime}ceva \cite{Ladyzenskaja-Uralceva-1968}, Fan--Zhao \cite{Fan-Zhao-1999}, and Ri--Kwon \cite{Ri-Kwon-2025}. More precisely, we adopt the strategy of Ri--Kwon \cite{Ri-Kwon-2025} for treating double phase growth, which combines the classical De Giorgi iteration scheme of Lady{\v{z}}enskaja--Ural{\cprime}ceva \cite{Ladyzenskaja-Uralceva-1968} and Fan--Zhao \cite{Fan-Zhao-1999} with the frozen functional technique, a standard tool in the regularity theory of minimizers of double phase functionals.

We first collect some preliminary results that will be used throughout this section. It is readily seen that, if $f \in C^{0, \frac{1}{|\log t|}}(\overline{\Omega})$ (see \eqref{def-log-continuity} for its definition), then there exists a constant $L(f) > 0$ such that
\begin{align}\label{L}
	R^{-\operatorname{osc}_{\Omega_{R}(x_0)} f}  \leq L(f), \quad \text{for all } \ \Omega_{R}(x_0)\ne \emptyset.
\end{align}
Moreover, for $\gamma\in (0,1]$, we denote by $C^{0, \gamma}(\overline{\Omega})$ the space of all $f\in C(\overline{\Omega})$ such that
\begin{align*}
	[f]_{0,\gamma} := \sup_{\substack{x,y \in \Omega \\ x \neq y}}
	\frac{|f(x)-f(y)|}{|x-y|^{\gamma}}<\infty.
\end{align*}

Let \textnormal{(\hyperlink{H0}{H$_0$})} hold. The conjugate function of $\mathcal{S}$, defined in \eqref{S}, is given by
\begin{align*}
	\widetilde{\mathcal{S}}(x,t):=\sup_{r\geq 0}\left\{rt-\mathcal{S}(x,r)\right\}\quad \text{for } (x,t)\in \Omega\times [0,\infty).
\end{align*}
Furthermore, set
\begin{align}\label{h+}
	\mathfrak{s}^+:=\sup_{(x,t)\in \Omega\times [0,\infty)}\frac{t\mathbf{s}(x,t)}{\mathcal{S}(x,t)},\quad \mathfrak{s}^-:=\inf_{(x,t)\in \Omega\times [0,\infty)}\frac{t\mathbf{s}(x,t)}{\mathcal{S}(x,t)},
\end{align}
where $\mathbf{s}(x, t):=\partial_t\mathcal{S}(x,t)$ as given in \textnormal{(\hyperlink{H5}{H$_5$})}, and
\begin{align*}
	\widetilde{\mathfrak{s}}^+:=\frac{\mathfrak{s}^-}{\mathfrak{s}^--1},\quad \widetilde{\mathfrak{s}}^-:=\frac{\mathfrak{s}^+}{\mathfrak{s}^+-1}.
\end{align*}

The next lemma follows directly from the definition of the conjugate function, see Bahrouni--Bahrouni--Missaoui \cite{Bahrouni-Bahrouni-Missaoui-2026}.

\begin{lemma} \label{LConjugate}
	For $r,t,\tau\geq 0$ and $x\in\Omega$, the following assertions hold.
	\begin{enumerate}
		\item[\textnormal{(i)}]
			$rt\leq \mathcal{S}(x,r)+\widetilde{\mathcal{S}}(x,t)$;
		\item[\textnormal{(ii)}]
			$\widetilde{\mathcal{S}}(x,\mathbf{s}(x,t))\leq \left(\mathfrak{s}^+-1\right)\mathcal{S}(x,t)$;
		\item[\textnormal{(iii)}]
			$\min\left\{\tau^{\mathfrak{s}^+},\tau^{\mathfrak{s}^-}\right\}\mathcal{S}(x,t)\leq \mathcal{S}(x,\tau t)\leq \max\left\{\tau^{\mathfrak{s}^+},\tau^{\mathfrak{s}^-}\right\}\mathcal{S}(x,t)$;
		\item[\textnormal{(iv)}]
			$\min\left\{\tau^{\widetilde{\mathfrak{s}}^+},\tau^{\widetilde{\mathfrak{s}}^-}\right\}\widetilde{\mathcal{S}}(x,t)\leq\widetilde{\mathcal{S}}(x,\tau t)\leq \max\left\{\tau^{\widetilde{\mathfrak{s}}^+},\tau^{\widetilde{\mathfrak{s}}^-}\right\}\widetilde{\mathcal{S}}(x,t)$.
	\end{enumerate}
\end{lemma}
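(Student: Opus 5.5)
The plan is to verify each assertion directly from the definition of $\widetilde{\mathcal{S}}$ and elementary properties of $\mathcal{S}(x,\cdot)$ for fixed $x$. I will treat only $x$ with $a(x)+b(x)\geq d$ (a full-measure set), so that $\mathcal{S}(x,t)>0$ for $t>0$. Two preliminary facts are needed.

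First, $1<\mathfrak{s}^-\leq \mathfrak{s}^+<\infty$. This follows by comparing each term of $t\mathbf{s}(x,t)$ with the corresponding term of $\mathcal{S}$. The quotient lies between $p^-$ and $q^++1$, because the extra logarithmic term satisfies
\begin{align*}
\frac{\eta t}{(\delta+\eta t)\log(\delta+\eta t)}\leq 1 ,
\end{align*}
which comes from $\log(1+y)\geq \frac{y}{1+y}$ and $\delta\geq1$. Second, $\mathcal{S}(x,\cdot)$ is $C^1$ and convex with derivative $\mathbf{s}(x,\cdot)$ nondecreasing.

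Assertion (i) is immediate: by definition $\widetilde{\mathcal{S}}(x,t)\geq rt-\mathcal{S}(x,r)$ for every $r\geq0$. For (ii), I consider the concave function $r\mapsto r\,\mathbf{s}(x,t)-\mathcal{S}(x,r)$. Its derivative $\mathbf{s}(x,t)-\mathbf{s}(x,r)$ is nonnegative for $r\leq t$ and nonpositive for $r\geq t$, so the supremum is attained at $r=t$. This gives
\begin{align*}
\widetilde{\mathcal{S}}(x,\mathbf{s}(x,t))=t\,\mathbf{s}(x,t)-\mathcal{S}(x,t)\leq(\mathfrak{s}^+-1)\mathcal{S}(x,t),
\end{align*}
where the last step uses the definition of $\mathfrak{s}^+$.

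For (iii), fix $t>0$ (the case $t=0$ is trivial) and set $h(\tau)=\log\mathcal{S}(x,\tau t)$. Then
\begin{align*}
h'(\tau)=\frac{1}{\tau}\,\frac{\tau t\,\mathbf{s}(x,\tau t)}{\mathcal{S}(x,\tau t)}\in\Big[\frac{\mathfrak{s}^-}{\tau},\frac{\mathfrak{s}^+}{\tau}\Big].
\end{align*}
Integrating between $1$ and $\tau$ yields $\tau^{\mathfrak{s}^-}\leq \mathcal{S}(x,\tau t)/\mathcal{S}(x,t)\leq\tau^{\mathfrak{s}^+}$ when $\tau\geq1$, and the reversed bounds when $\tau<1$. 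Both cases are captured by the stated min/max formulation.

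For (iv), I write $\widetilde{\mathcal{S}}(x,\tau t)=\sup_{\rho\geq0}\{\tau^{\sigma}\rho\,\tau t-\mathcal{S}(x,\tau^\sigma\rho)\}$ via the substitution $r=\tau^\sigma\rho$, with a $\sigma>0$ still to be chosen. Take $\tau\geq1$ for the upper bound. By (iii), $\mathcal{S}(x,\tau^\sigma\rho)\geq\tau^{\sigma\mathfrak{s}^-}\mathcal{S}(x,\rho)$. Choosing $\sigma=1/(\mathfrak{s}^--1)$ makes $\sigma+1=\sigma\mathfrak{s}^-$, so the exponents of $\tau$ match. Hence
\begin{align*}
\widetilde{\mathcal{S}}(x,\tau t)\leq \tau^{\sigma\mathfrak{s}^-}\widetilde{\mathcal{S}}(x,t)=\tau^{\widetilde{\mathfrak{s}}^+}\widetilde{\mathcal{S}}(x,t).
\end{align*}
For the lower bound with $\tau\geq1$, I use $\mathcal{S}(x,\tau^\sigma\rho)\leq\tau^{\sigma\mathfrak{s}^+}\mathcal{S}(x,\rho)$ with $\sigma=1/(\mathfrak{s}^+-1)$, which gives $\widetilde{\mathcal{S}}(x,\tau t)\geq\tau^{\widetilde{\mathfrak{s}}^-}\widetilde{\mathcal{S}}(x,t)$. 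The case $\tau<1$ follows in the same way with the roles of $\mathfrak{s}^\pm$ exchanged, since (iii) reverses for $\tau^\sigma<1$.

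The main point requiring care is the bookkeeping in (iv): the correct branch of (iii) must be matched with the correct choice of $\sigma$ in each of the four combinations (upper or lower bound, $\tau\geq1$ or $\tau<1$). The finiteness and the strict lower bound $\mathfrak{s}^->1$ are needed so that $\sigma$ is well defined.
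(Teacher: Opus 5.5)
Your proof is correct, and it is exactly the direct verification from the definition of $\widetilde{\mathcal{S}}$ that the paper has in mind; the paper gives no argument of its own and simply refers to Bahrouni--Bahrouni--Missaoui. Your preliminary check that $p^-\leq\mathfrak{s}^-\leq\mathfrak{s}^+\leq q^++1$, via $\log(\delta+\eta t)\geq \frac{\eta t}{\delta+\eta t}$, is a worthwhile addition: it is what makes $\widetilde{\mathfrak{s}}^{\pm}$ well defined and the choices of $\sigma$ in (iv) legitimate, and the remaining edge case $\tau=0$ is trivial.
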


In the remainder of this section, we assume that the hypotheses of Theorem \ref{Th.H} are satisfied. Let $u \in W^{1,\mathcal{S}}(\Omega)\cap L^\infty(\Omega)$ be a  weak solution of \eqref{E-H}, and let $M>0$ be such that $\| u\|_{L^{\infty}(\Omega)} \leq M$.  Furthermore, let $y \in \overline{\Omega}$ and $R\in (0,1]$ be such that $|B_R(y)| \leq 1$. For $r>0$, we set
\begin{align*}
	B_r:=B_r(y),\quad \Omega_r:=B_r(y)\cap \Omega, \quad (\partial \Omega)_{r}:= B_{r}(y)\cap \partial \Omega.
\end{align*}
For $i=1,2$, define
\begin{align}\label{Si}
	\mathcal{S}_i(t) := t^{p_i} + b_0 t^{q_i}\log(\delta+\eta t) \quad \text{for } t \geq 0,
\end{align}
where
\begin{align*}
	b_0 := \inf_{\Omega_R} b
\end{align*}
and
\begin{align*}
	f_1:=\inf_{\Omega_R}f,\quad f_2:=\sup_{\Omega_R}f\quad \text{for}\ f\in\{p,q\}.
\end{align*}
It is clear that the function  $t\mapsto\frac{\mathcal{S}_i(t)}{t^{\gamma}}$ with $0\leq \gamma\leq p_1$ is nondecreasing on $(0,\infty)$. Furthermore, $t\mapsto \mathcal{S}_i(t)$ is convex on $(0,\infty)$ since
\begin{align*}
	\mathcal{S}_i''(t)
	&=p_i(p_i-1)t^{p_i-2}
	+b_0q_i(q_i-1)t^{q_i-2}\log(\delta+\eta t)\\
	&\quad+b_0q_it^{q_i-1}\frac{\eta}{\delta+\eta t}
	+b_0\eta\frac{\eta(q_i-1)t^{q_i}+\delta q_it^{q_i-1}}
	{(\delta+\eta t)^2}>0,\quad \text{for all } t\in (0,\infty).
\end{align*}
Moreover, by \eqref{ei-2}, we have
\begin{equation}\label{S-ine}
	\mathcal{S}_i(\tau t)\leq \tau^{q^++1}\mathcal{S}_i(t),\quad\text{for all } t\geq 0 \ \text{and}\ \tau\geq 1.
\end{equation}
Finally, we note that condition \eqref{TA3} implies
\begin{align}\label{pq}
	t^{q(x)}\log(\delta+\eta t)\leq C_* t^{p(x)+\alpha}+\log(\delta+\eta)\quad\text{for all } (x,t)\in \Omega\times [0,\infty).
\end{align}

%********************************************************************
\subsection{Interior H\"older continuity}
%********************************************************************

In this subsection, we establish the H\"older continuity of $u$ in an interior ball $B_R\subset \Omega$.

Our first step is the following Caccioppoli-type inequality.

\begin{proposition}\label{Le-HCac.I}
	Let $0 < \tau < t \leq R$ and let $\omega = \pm u$. Then, for any $k \geq \sup_{B_R}\omega-\sigma M$, where
	\begin{align}\label{H2}
		\sigma := \min\left\{2, \frac{a_2(M)}{3Mb_1(M)} \right\},
	\end{align}
	the following assertions hold:
	\begin{enumerate}
		\item[\textnormal{(i)}]
			If
			\begin{align}\label{H3}
				b_0 \leq 4[b]_{0,\alpha} R^\alpha,
			\end{align}
			then
			\begin{align}\label{H4}
				\int_{A_{k,\tau}} |\nabla \omega|^{p_1} \, \mathrm{d} x \leq \gamma' \left(\frac{R}{t - \tau}\right)^{\alpha} \int_{A_{k,t}} \left(\frac{\omega - k}{t - \tau}\right)^{p_2} \, \mathrm{d} x + \gamma'_1 |A_{k,t}|,
			\end{align}
			where $A_{\kappa,r}:= \{x \in B_r \colon \omega(x) > \kappa \}$ for $r>0$ and $\kappa\in\mathbb{R}$,
			\begin{align*}
				\gamma' = \gamma'\left(\text{\rm data},a_1(M),a_2(M), b_1(M)\right), \quad \gamma'_1 =\gamma_1'\left(\text{\rm data},a_1(M),a_2(M)\right).
			\end{align*}
		\item[\textnormal{(ii)}]
			If
			\begin{align}\label{H5}
				b_0 > 4[b]_{0,\alpha} R^\alpha,
			\end{align}
			then
			\begin{align}\label{H6}
				\int_{A_{k,\tau}} \mathcal{S}_1(|\nabla \omega|) \, \mathrm{d} x \leq \gamma'' \int_{A_{k,t}} \mathcal{S}_2 \left(\frac{\omega - k}{t - \tau} \right) \, \mathrm{d} x + \gamma''_1 |A_{k,t}|,
			\end{align}
			where
			\begin{align*}
				\gamma'' = \gamma''(\text{\rm data},a_1(M),a_2(M)), \quad
				\gamma''_1 = \gamma''_1(\text{\rm data},a_1(M),a_2(M)).
			\end{align*}
	\end{enumerate}
\end{proposition}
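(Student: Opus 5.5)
The plan is a standard Caccioppoli argument with a cutoff, using the structure conditions in (H$_5$) and the Young inequality from Lemma \ref{LConjugate}. Fix a cutoff $\zeta\in C_0^\infty(B_t)$ with $0\le\zeta\le1$, $\zeta\equiv1$ on $B_\tau$ and $|\nabla\zeta|\le 2/(t-\tau)$. Test \eqref{E-V} (for $\omega=u$; for $\omega=-u$ use the structure of $-\mathcal A(x,-t,-\xi)$, which satisfies the same hypotheses) with
\[
\varphi=\zeta^{m}(\omega-k)_+,\qquad m:=\lceil \mathfrak{s}^+\rceil+1 .
\]
This is admissible since $\zeta$ has compact support in $B_R\subset\Omega$. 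On $A_{k,t}$ we have $0<\omega-k\le \sup_{B_R}\omega-k\le\sigma M$ and $|u|\le M$. By (H$_5$)(ii), the principal term is bounded below by $a_2(M)\int\zeta^m\mathcal S(x,|\nabla\omega|)-a_3|A_{k,t}|$. The cross term $m\int \zeta^{m-1}|\mathcal A||\nabla\zeta|(\omega-k)$ is controlled by (H$_5$)(i). Writing $\zeta^{m-1}\mathbf s(x,|\nabla\omega|)\,\frac{\omega-k}{t-\tau}$ and applying Lemma \ref{LConjugate}(i),(ii),(iv) with a small parameter $\varepsilon$, we get $\varepsilon\int\zeta^m\mathcal S(x,|\nabla\omega|)+C_\varepsilon\int_{A_{k,t}}\mathcal S\!\left(x,\frac{\omega-k}{t-\tau}\right)$. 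The choice of $m$ guarantees that $\zeta^{(m-1)\widetilde{\mathfrak s}^{\pm}}\le \zeta^{m}$ is usable. The lower-order term from $\mathcal B$ satisfies
\[
b_1(M)\int \zeta^m(\mathcal S(x,|\nabla\omega|)+b_2)(\omega-k)\le \sigma M b_1(M)\int\zeta^m\mathcal S+\dots ,
\]
and since $\sigma Mb_1(M)\le a_2(M)/3$ by \eqref{H2}, it is absorbed. This yields the non-frozen estimate
\[
\int_{A_{k,\tau}}\mathcal S(x,|\nabla\omega|)\le C\int_{A_{k,t}}\mathcal S\!\left(x,\tfrac{\omega-k}{t-\tau}\right)+C|A_{k,t}|.
\]

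Next, freeze the coefficients and exponents, treating the two cases separately. On the left, $\mathcal S(x,s)\ge c\,\mathcal S_1(s)-c'$: for $s\ge1$ use $s^{p(x)}\ge s^{p_1}$ and $b(x)\ge b_0$; for $s<1$ the discrepancy is bounded by a constant, which goes into $|A_{k,t}|$. Here (H$_0$)(iii) should be invoked to ensure the $t^{p}$ part is controlled, using the data-dependent lower bound $a+b\ge d$. On the right, set $s=\frac{\omega-k}{t-\tau}$ and use $\mathcal S(x,s)\le \|a\|_\infty(s^{p_2}+1)+b(x)(s^{q_2}+1)\log(\delta+\eta s)$.

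In case (ii), \eqref{H5} gives $b(x)\le b_0+2[b]_{0,\alpha}R^\alpha\le 2b_0$ on $B_R$, so the right side is $\le C\mathcal S_2(s)+C$. Here the constant $\|a\|_\infty$ must be matched against the coefficient $1$ of $t^{p_2}$ in $\mathcal S_2$, which is fine up to constants. This gives \eqref{H6}.

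In case (i), $b(x)\le 6[b]_{0,\alpha}R^\alpha$ on $B_R$, and by \eqref{pq},
\[
b(x)s^{q(x)}\log(\delta+\eta s)\le CR^\alpha(C_*s^{p(x)+\alpha}+C).
\]
Since $\omega-k\le 2M$, we have $s^\alpha\le (2M)^\alpha (t-\tau)^{-\alpha}$, so $R^\alpha s^{p(x)+\alpha}\le C\left(\frac{R}{t-\tau}\right)^\alpha s^{p(x)}\le C\left(\frac{R}{t-\tau}\right)^\alpha (s^{p_2}+1)$. The left side is only bounded below by $|\nabla\omega|^{p_1}$ up to constants, which gives \eqref{H4}.

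The main obstacle is this freezing step. Replacing $s^{p(x)}$ by $s^{p_2}$ costs nothing for $s\ge1$, but for $s<1$ it costs an additive constant times $|A_{k,t}|$, which is harmless. The real subtlety is on the left: passing from $|\nabla\omega|^{p(x)}$ to $|\nabla\omega|^{p_1}$ is fine, but in case (i) the term $a(x)|\nabla\omega|^{p(x)}$ may be degenerate where $a$ is small. I would handle this by using $a+b\ge d$ together with $b\le CR^\alpha$ small. If $R$ is small, this forces $a\ge d/2$, and one can assume $R$ is small.
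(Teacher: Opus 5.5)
Your proposal follows the paper's proof essentially step for step. You use the same cutoff test function; your exponent $m=\lceil\mathfrak{s}^+\rceil+1$ in place of $\mathfrak{s}^+$ is harmless, since only $m\ge\mathfrak{s}^+$ is needed for $\zeta^{(m-1)\widetilde{\mathfrak{s}}^-}\le\zeta^m$. You also use the same Young inequality through $\widetilde{\mathcal{S}}$ and the same absorption of the $\mathcal{B}$-term via $\sigma M b_1(M)\le a_2(M)/3$. The freezing is the same too: \eqref{pq} in case (i), and $b\le \frac32 b_0$ (your $2b_0$) in case (ii).

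There is one slip to fix in case (i). You first extract $R^\alpha s^\alpha\le C\left(\frac{R}{t-\tau}\right)^\alpha$ and only afterwards use $s^{p(x)}\le s^{p_2}+1$. This leaves the term $C\left(\frac{R}{t-\tau}\right)^\alpha|A_{k,t}|$. Since $R/(t-\tau)$ is unbounded, this term is not of the form $\gamma_1'|A_{k,t}|$, so at that point the $s<1$ discrepancy is not harmless, contrary to your closing remark.

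Reverse the order, as the paper does in \eqref{H15}:
\begin{itemize}
\item First use $s^{p(x)+\alpha}\le s^{p_2+\alpha}+1$. The extra $1$ then only costs $C_*\|b\|_{L^\infty(\Omega)}|A_{k,t}|$.
\item Then bound $b(x)s^{p_2+\alpha}\le 6[b]_{0,\alpha}R^\alpha s^\alpha\, s^{p_2}$ with $R^\alpha s^\alpha\le\left(\frac{a_2(M)}{3b_1(M)}\right)^\alpha\left(\frac{R}{t-\tau}\right)^\alpha$.
\end{itemize}
Using $\omega-k\le\sigma M\le a_2(M)/(3b_1(M))$ here instead of $2M$ also gives exactly the stated dependence of $\gamma'$ on $b_1(M)$.

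Finally, you did not treat the $+1$ part of $|\mathcal{A}|$ in the cross term. It is handled by $s\le 1+C\mathcal{S}(x,s)$, which again follows from $a+b\ge d$.
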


\begin{proof}
	Let \( 0 < \tau < t \leq R \) and let \( k \geq 0 \). Let $\mathfrak{s}^+$ be given by \eqref{h+}, and let \( \zeta \in C^1(\mathbb{R}^N) \) be a function satisfying
	\begin{equation}\label{zeta}
		\begin{aligned}
			0 \leq \zeta(x) \leq 1, \quad |\nabla \zeta(x)| \leq \frac{2}{t-\tau} \quad\text{for } x \in \mathbb{R}^N, \\\
			\zeta(x) = 1 \quad\text{for } x \in B_\tau, \quad \operatorname{supp} \zeta \subset B_t.
		\end{aligned}
	\end{equation}
	Taking $\varphi = \zeta^{\mathfrak{s}^+} (\omega-k)_+$ as a test function in \eqref{E-V}, we arrive at
	\begin{equation}\label{H7}
		\begin{aligned}
		&\int_{A_{k,t}} \zeta^{\mathfrak{s}^+} \mathcal{A}(x,u,\nabla u)\cdot \nabla u \, \mathrm{d} x\\
		& \leq  \mathfrak{s}^+ \left|\int_{A_{k,t}} \zeta^{\mathfrak{s}^+-1} \mathcal{A}(x,u,\nabla u)\cdot\nabla \zeta \, (\omega - k) \, \mathrm{d} x\right| + \left|\int_{A_{k,t}} \zeta^{\mathfrak{s}^+} \mathcal{B}(x,u,\nabla u) (\omega - k) \, \mathrm{d} x\right|.
		\end{aligned}
	\end{equation}
	Assumption \textnormal{(\hyperlink{H5}{H$_5$})}\textnormal{(\hyperlink{H5ii}{ii})} yields
	\begin{align}\label{H7-1}
		\int_{A_{k,t}} \zeta^{\mathfrak{s}^+} \mathcal{A}(x,u,\nabla u)\cdot \nabla u \,\mathrm{d} x\geq a_2(M) \int_{A_{k,t}} \zeta^{\mathfrak{s}^+} \mathcal{S}(x,|\nabla \omega|)\, \mathrm{d} x - a_3 |A_{k,t}|.
	\end{align}
	Furthermore, by \textnormal{(\hyperlink{H5}{H$_5$})}\textnormal{(\hyperlink{H5i}{i})}, we obtain
	\begin{equation}\label{H9}
		\begin{aligned}
			&\mathfrak{s}^+ \left| \int_{A_{k,t}} \zeta^{\mathfrak{s}^+ - 1} \mathcal{A}(x, u, \nabla u)\cdot\nabla \zeta\,(\omega - k)\, \mathrm{d} x \right|\\
			&\leq  a_1(M)\mathfrak{s}^+\int_{A_{k,t}} \zeta^{\mathfrak{s}^+ - 1}\left[\mathbf{s}(x, |\nabla u|)+ 1\right]|\nabla \zeta|(\omega-k)\, \mathrm{d} x.
		\end{aligned}
	\end{equation}
	Invoking Lemma \ref{LConjugate}, for $\varepsilon\in (0,1)$, we have
	\begin{align*}
		\zeta^{\mathfrak{s}^+ - 1}\mathbf{s}(x, |\nabla u|)|\nabla \zeta|(\omega-k)
		& \leq  \widetilde{\mathcal{S}}\left(x,\varepsilon\zeta^{\mathfrak{s}^+ - 1}\mathbf{s}(x, |\nabla u|)\right)+\mathcal{S}\left(x,\varepsilon^{-1}|\nabla \zeta|(\omega-k)\right)\\
		& \leq  \varepsilon^{\frac{\mathfrak{s}^+}{\mathfrak{s}^+-1}}\zeta^{\mathfrak{s}^+}\widetilde{\mathcal{S}}\left(x,\mathbf{s}(x,|\nabla u|)\right)+\varepsilon^{-\mathfrak{s}^+}\mathcal{S}\left(x,|\nabla \zeta|(\omega-k)\right) \\
		& \leq  \left(\mathfrak{s}^+-1\right)\varepsilon^{\frac{\mathfrak{s}^+}{\mathfrak{s}^+-1}}\zeta^{\mathfrak{s}^+}\mathcal{S}\left(x,|\nabla u|\right)+\varepsilon^{-\mathfrak{s}^+}\mathcal{S}\left(x,|\nabla \zeta|(\omega-k)\right).
	\end{align*}
	Choosing $\varepsilon\in (0,1)$ such that
	\begin{align*}
		a_1(M)\mathfrak{s}^+\left(\mathfrak{s}^+-1\right)\varepsilon^{\frac{\mathfrak{s}^+}{\mathfrak{s}^+-1}}<\frac{a_2(M)}{3},
	\end{align*}
	we infer from the last estimate and \eqref{zeta} that
	\begin{align}\label{H9-1}
		a_1(M)\mathfrak{s}^+ \zeta^{\mathfrak{s}^+ - 1}|\nabla \zeta|(\omega-k)\mathbf{s}(x, |\nabla u|) \leq \frac{a_2(M)}{3}\zeta^{\mathfrak{s}^+} \mathcal{S}\left(x,|\nabla \omega|\right)+C_1 \mathcal{S}\left(x,\frac{\omega-k}{t-\tau}\right).
	\end{align}
	Here and throughout the rest of this proof, $C_i$ ($i\in\mathbb{N}$) denotes a positive constant depending only on the data, $a_1(M)$ and $a_2(M)$. Using $t\leq 1+\max\{d^{-1},(d\log(\delta+\eta))^{-1}\}\mathcal{S}(x,t)$ for $(x,t)\in\Omega\times [0,\infty)$, we also have
	\begin{align}\label{H9-2}
		\mathfrak{s}^+a_1(M)\zeta^{\mathfrak{s}^+ - 1}|\nabla \zeta|(\omega-k)\leq C_2 \mathcal{S}\left(x,\frac{\omega-k}{t-\tau}\right)+C_3.
	\end{align}
	From \eqref{H9}, \eqref{H9-1}, and \eqref{H9-2}, we arrive at
	\begin{equation}\label{H7-2}
		\begin{aligned}
			&\mathfrak{s}^+ \left| \int_{A_{k,t}} \zeta^{\mathfrak{s}^+ - 1} \mathcal{A}(x, u, \nabla u)\cdot\nabla \zeta\, (\omega - k)\, \mathrm{d} x \right|  \\
			& \leq \frac{a_2(M)}{3} \int_{A_{k,t}} \zeta^{\mathfrak{s}^+} \mathcal{S}(x, |\nabla \omega|)\, \mathrm{d} x + C_4 \int_{A_{k,t}} \mathcal{S}\left(x, \frac{\omega- k}{t - \tau}\right)\, \mathrm{d} x + C_5 |A_{k,t}|.
		\end{aligned}
	\end{equation}
	On the other hand, by the assumption on $k$ and the definition of $\sigma$ in \eqref{H2}, we have
	\begin{align*}
		0 < \omega(x) - k \leq \sigma M \leq \frac{a_2(M)}{3 b_1(M)} \quad \text{for } x \in A_{k,t}.
	\end{align*}
	Together with \textnormal{(\hyperlink{H5}{H$_5$})}\textnormal{(\hyperlink{H5iii}{iii})}, this gives
	\begin{align}\label{H7-3}
		\left| \int_{A_{k,t}} \zeta^{\mathfrak{s}^+} \mathcal{B}(x,u,\nabla u)(\omega - k)\, \mathrm{d} x \right|
		\leq \frac{a_2(M)}{3} \left( \int_{A_{k,t}} \zeta^{\mathfrak{s}^+} \mathcal{S}(x,|\nabla \omega|)\, \mathrm{d} x +  b_2|A_{k,t}|\right).
	\end{align}
	Combining \eqref{H7}, \eqref{H7-1}, \eqref{H7-2}, and \eqref{H7-3}, we derive
	\begin{align}\label{H14}
		\int_{A_{k,\tau}} \mathcal{S}(x, |\nabla \omega|)\, \mathrm{d} x \leq \int_{A_{k,t}} \zeta^{\mathfrak{s}^+} \mathcal{S}(x, |\nabla \omega|)\, \mathrm{d} x \leq C_6 \int_{A_{k,t}}  \mathcal{S}\left(x, \frac{\omega- k}{t - \tau}\right)\, \mathrm{d} x + C_7 |A_{k,t}|.
	\end{align}

	(i) \textbf{Case \eqref{H3}:} By taking $R>0$ sufficiently small, we may assume that $b(x)<\frac{d}{2}$ for all $x\in B_R$. Consequently,
	\begin{align}\label{a.BR}
		a(x)>\frac{d}{2} \quad \text{for all }x\in B_R.
	\end{align}
	By \eqref{H3}, it holds that
	\begin{align*}
		b(x) = b(x) - b_0 + b_0 \leq 6 [b]_{0,\alpha} R^{\alpha} \quad \text{for all } x \in B_R.
	\end{align*}
	Using this and \eqref{pq} along with $0<t-\tau<R \leq 1$ and $\omega-k\leq\sigma M\leq\frac{a_2(M)}{3b_1(M)}$, we find that
	\begin{equation}\label{H15}
		\begin{aligned}
			\mathcal{S}\left(x, \frac{\omega- k}{t - \tau}\right)
			& \leq  \|a\|_{L^\infty(\Omega)}\left[\left(\frac{\omega- k}{t - \tau}\right)^{p_2}+1\right] +  b(x) \left[C_*\left(\frac{\omega- k}{t - \tau}\right)^{p(x)+\alpha}+\log(\delta+\eta)\right]  \\
			& \leq  \|a\|_{L^\infty(\Omega)}\left(\frac{\omega- k}{t - \tau}\right)^{p_2}+  C_* b(x) \left(\frac{\omega- k}{t - \tau}\right)^{p_2+\alpha}+C_8  \\
			& \leq  \|a\|_{L^\infty(\Omega)}\left(\frac{\omega- k}{t - \tau}\right)^{p_2}+ 6 C_*[b]_{0,\alpha} R^{\alpha} \left(\frac{\omega- k}{t - \tau}\right)^{p_2+\alpha} + C_8   \\
			& \leq \left[\|a\|_{L^\infty(\Omega)} + 6 C_*[b]_{0,\alpha}\left(\frac{a_2(M)}{3b_1(M)}\right)^{\alpha}\right]  \left(\frac{R}{t - \tau}\right)^{\alpha}  \left(\frac{\omega- k}{t - \tau}\right)^{p_2} +C_8.
		\end{aligned}
	\end{equation}
	Combining \eqref{H15} with the estimate $|\nabla\omega|^{p_1}\leq \frac{2}{d}\mathcal{S}(x,|\nabla\omega|)+1$, which follows from \eqref{a.BR}, we derive \eqref{H4} from \eqref{H14}.

	\vskip5pt
	(ii) \textbf{Case \eqref{H5}:} In this case,
	\begin{align}\label{a2}
		b(x) = b(x) - b_0 + b_0 \leq 2 [b]_{0,\alpha} R^{\alpha} + b_0 \leq \frac{3}{2} b_0 \quad \text{for all } x \in B_R.
	\end{align}
	Moreover,
	\begin{align}\label{e1}
		b(x)\log(\delta+\eta|\nabla\omega|)\leq \mathcal{S}(x,|\nabla\omega|)+\|b\|_{L^{\infty}(\Omega)}\log(\delta+\eta)\quad \text{for a.a.\,} x\in\Omega.
	\end{align}
	On the other hand, if $|\nabla\omega|\geq 1$, then
	\begin{align*}
		|\nabla\omega|^{p_1}\leq \frac{1}{d}a(x)|\nabla\omega|^{p(x)}+\frac{1}{d}(\log(\delta+\eta))^{-1}b(x)|\nabla\omega|^{q(x)}\log(\delta+\eta|\nabla\omega|).
	\end{align*}
	Thus,
	\begin{align}\label{e2}
		|\nabla\omega|^{p_1}\leq \frac{1}{d}\left(1+(\log(\delta+\eta))^{-1}\right)\mathcal{S}(x,|\nabla\omega|)+1\quad \text{for a.a.\,} x\in\Omega.
	\end{align}
	Using \eqref{e1} and \eqref{e2}, and then applying \eqref{H14} and \eqref{a2}, we arrive at
	\begin{align*}
		&\int_{A_{k,\tau}}   \mathcal{S}_1(|\nabla \omega|)\, \mathrm{d} x\\
		& \leq \int_{A_{k,\tau}}\left[|\nabla \omega|^{p_1}+b(x)\left(|\nabla \omega|^{q(x)}+1\right)\log(\delta+\eta|\nabla\omega|)\right]\, \mathrm{d} x   \\
		& \leq \int_{A_{k,\tau}}\left[C_9\mathcal{S}(x,|\nabla\omega|)+C_{10}\right]\, \mathrm{d} x \\
		& \leq C_9 \int_{A_{k,t}} \left[\|a\|_{L^\infty(\Omega)}\left(\left(\frac{\omega- k}{t - \tau}\right)^{p_2}+1\right) + \frac{3}{2}b_0 \left(\left(\frac{\omega- k}{t - \tau}\right)^{q_2}+1\right)\log\left(\delta+\eta\frac{\omega- k}{t - \tau}\right)\right]\, \mathrm{d} x\\
		& \qquad +C_{10}|A_{k,t}|.
	\end{align*}
	From this and the estimate
	\begin{align*}
		\log\left(\delta+\eta\frac{\omega- k}{t - \tau}\right)\leq \left(\frac{\omega- k}{t - \tau}\right)^{p_2}+C_{11},
	\end{align*}
	we obtain
	\begin{align*}
		\int_{A_{k,\tau}}   \mathcal{S}_1(|\nabla \omega|)\, \mathrm{d} x \leq C_{12} \int_{A_{k,t}} \mathcal{S}_2\left(\frac{\omega- k}{t - \tau}\right)\, \mathrm{d} x+C_{13}|A_{k,t}|.
	\end{align*}
	This proves \eqref{H6} and completes the proof.
\end{proof}

\begin{lemma}\label{Le-HR.I}
	Let $R' \in (0,1)$ satisfy $B_{R'} \subset \Omega$.
		Then there exist constants
	$0 < \theta < 1$ and $C_0 > 0$ such that for any $0 < R \leq R'$,
	\begin{align*}%\label{HL2-A2}
		\operatorname{osc}_{B_R} u \leq C R'^{-\beta'} R^{\beta'},
	\end{align*}
	where
	\begin{align*}%\label{HL2-A3*}
		\beta' = \min\left\{1, -\log_4\theta \right\}, \quad C = 4 \max \left\{ C_0 R', \operatorname{osc}_{B_{R'}}u \right\}.
	\end{align*}
\end{lemma}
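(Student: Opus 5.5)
The plan is to apply Lemma \ref{L2.7} with $b=4$, $\rho_0=R'$ and $\varepsilon=1$. This reduces the statement to a dichotomy: for every ball $B_{4\rho}\subset B_{R'}$ (interior case, so $\Omega_\rho=B_\rho$), either $\operatorname{osc}_{B_\rho}u\le C_0\rho$, or $\operatorname{osc}_{B_\rho}u\le\theta\operatorname{osc}_{B_{4\rho}}u$, with $\theta\in(0,1)$ and $C_0$ independent of $\rho$. The exponent $\beta'=\min\{1,-\log_4\theta\}$ and the constant $C=4\max\{C_0R',\operatorname{osc}_{B_{R'}}u\}$ then come out exactly as in Lemma \ref{L2.7}.

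To get the dichotomy I fix $\rho$ and work in $B_{4\rho}$, which plays the role of $B_R$ in Proposition \ref{Le-HCac.I}. Set $\omega_0=\operatorname{osc}_{B_{4\rho}}u$ and suppose $\omega_0>C_0\rho$. One of $\omega=u$ or $\omega=-u$ satisfies
\[
|\{x\in B_{2\rho}\colon \omega>\sup_{B_{4\rho}}\omega-\omega_0/2\}|\le \tfrac12|B_{2\rho}|.
\]
Take levels $k_j=\sup_{B_{4\rho}}\omega-\omega_0 2^{-j}$ for $j\ge1$, possibly starting at a larger $j_0$ so that $\omega_0 2^{-j}\le\sigma M$ and the hypothesis $k\ge\sup\omega-\sigma M$ of Proposition \ref{Le-HCac.I} holds. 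I treat the two alternatives of that proposition separately.

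\textbf{Case \eqref{H3}.} Here \eqref{H4} is a $p_1$/$p_2$ Caccioppoli inequality with the harmless factor $(R/(t-\tau))^\alpha$. Since $p\in C^{0,\frac{1}{|\log t|}}$, \eqref{L} lets us replace $p_2$ by $p_1$ at the cost of a constant: $(\omega-k)/\rho\le \omega_0/\rho$ and $\rho^{-(p_2-p_1)}\le L(p)$. The terms where $(\omega-k)/\rho$ is less than $1$ are absorbed, using $\omega_0\ge C_0\rho$, into the $|A_{k,t}|$ term. This puts $\omega$ in a De Giorgi class $\mathfrak{B}_{p_1}$ in the sense of Ladyzhenskaya--Ural'tseva. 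Two steps then follow. First, a shrinking-measure step: apply Lemma \ref{L2.6} between consecutive levels $k_j<k_{j+1}$ on $B_{2\rho}$, use H\"older's inequality together with the Caccioppoli bound from $B_{2\rho}$ to $B_{4\rho}$, and sum over $j$; this gives $|A_{k_s,2\rho}|\le c s^{-(p_1-1)/p_1}|B_{2\rho}|$. Second, a standard De Giorgi iteration on the radii $\rho+\rho2^{-n}$ and the levels $k_s+\omega_0 2^{-s}(1-2^{-n})$, using Lemma \ref{leRecur}. Together these give $\sup_{B_\rho}\omega\le\sup_{B_{4\rho}}\omega-\omega_0 2^{-s-1}$, and therefore $\operatorname{osc}_{B_\rho}u\le(1-2^{-s-1})\omega_0$.

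\textbf{Case \eqref{H5}.} This uses the frozen functions $\mathcal{S}_1,\mathcal{S}_2$ from \eqref{Si}. I run the same two steps with the Orlicz function $\mathcal{S}_1$ in place of $t^{p_1}$. Lemma \ref{L2.6} combined with Jensen's inequality for the convex $\mathcal{S}_1$ gives the measure decay. The doubling property \eqref{S-ine} and the log-H\"older continuity of $p,q$ (again via \eqref{L}) convert $\mathcal{S}_2(\omega_0 2^{-j}/\rho)$ into $C\,\mathcal{S}_1(\omega_0 2^{-j}/\rho)$. The log factor $\log(\delta+\eta t)$ in both $\mathcal{S}_i$ is the same, so only the power mismatch has to be controlled.

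I expect the main obstacle to be this exponent-mismatch control. The ratio $(\omega_0/\rho)^{q_2-q_1}$ must stay bounded uniformly. It is bounded by $L(q)\,\omega_0^{q_2-q_1}\le L(q)(2M)^{q_2-q_1}$, but only when $\omega_0/\rho\ge1$, which the assumption $\omega_0>C_0\rho$ with $C_0\ge1$ guarantees. The remaining work is to make sure that all constants, in particular $\theta$ and the number $s$ of shrinking steps, depend only on the data, on $M$, and on $a_i(M),b_1(M)$, and not on $\rho$.
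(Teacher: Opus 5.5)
Your proposal is correct and follows the paper's proof essentially step for step: Lemma~\ref{L2.7} with $b=4$ and $\varepsilon=1$; the choice of $\omega=\pm u$ by the half-measure condition; measure shrinking via Lemma~\ref{L2.6}, H\"older's inequality and summation over the levels; a De Giorgi iteration closed by Lemma~\ref{leRecur}; and, in case \eqref{H5}, reduction of the frozen $\mathcal{S}_1$-energy to $p_1$-growth. For that last reduction you use Jensen together with the monotonicity of $\mathcal{S}_1(t)/t^{p_1}$, while the paper uses the pointwise bound \eqref{H47}; the two amount to the same thing.

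One slip needs fixing. The iteration levels must climb only halfway, to $k_s+\omega_0 2^{-s-1}(1-2^{-n})$, as in the paper's $k_m=k_0+\frac{N_0}{2}(1-2^{-m})$. Your levels $k_s+\omega_0 2^{-s}(1-2^{-n})$ tend to $\sup_{B_{4\rho}}\omega$, so the iteration then yields only a trivial statement. The conclusion you state, $\sup_{B_\rho}\omega\le\sup_{B_{4\rho}}\omega-\omega_0 2^{-s-1}$, is exactly what the halfway levels give.
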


\begin{proof}
	Let $\sigma$ be given by \eqref{H2} and define \begin{align*}%\label{Hpsi}
		\psi := \ell^{-1} \operatorname{osc}_{B_R} u\quad \text{with}\quad \ell = \max\left\{2, \frac{2}{\sigma}\right\}.
	\end{align*}
	Clearly, at least one of the functions $\omega = \pm u$ satisfies
	\begin{align}\label{omega}
		\left| \{ x \in B_{R/2} \colon \omega(x) > \sup_{B_R} \omega - \frac{1}{2} \operatorname{osc}_{B_R} u \} \right| \leq \frac{1}{2} \left| B_{R/2} \right|.
	\end{align}
	In what follows, we denote by $\omega$ the function $u$ or $-u$ for which \eqref{omega} holds. We now distinguish between the two cases \eqref{H3} and \eqref{H5}.

	\textbf{Case \eqref{H3}:}
	Our first goal is to show that there exists a number
	\begin{align*}
		\theta' = \theta'(\text{\rm data},a_1(M),a_2(M), b_1(M)) > 0
	\end{align*}
	such that, whenever
	\begin{align}\label{H23}
		| A_{k_0,R/2}| \leq \theta' R^N,
	\end{align}
	either
	\begin{align}\label{H24}
		\sup_{B_{R/4}} \omega \leq k_0 + \frac{N_0}{2},
	\end{align}
	or
	\begin{align}\label{H25}
		N_0 \leq (\gamma'_1)^{\frac{1}{p_1}} R,
	\end{align}
	where $0 < N_0 < \psi$, $k_0 = \sup_{B_R} \omega - N_0$, and $\gamma'_1$ is the constant appearing in \eqref{H4}.

	To establish the claim, for $m \in \N_0$, define
	\begin{align}
		\rho_m &= \left(\frac{1}{4} + \frac{1}{2^{m+2}}\right) R, \quad k_m = k_0 + \frac{N_0}{2} - \frac{N_0}{2^{m+1}},\label{H26}\\
		D_{m+1} &= A_{k_m, \rho_{m+1}} \setminus A_{k_{m+1}, \rho_{m+1}}\quad \text{and}\quad y_m = R^{-N} |A_{k_m, \rho_{m}}|.\label{H27}
	\end{align}
	Since $k_m \geq \sup_{B_R} \omega - \sigma M$, we may apply \eqref{H4} with $k = k_m$, $t = \rho_m$, and $\tau = \rho_{m+1}$. This yields
	\begin{align}\label{H27'}
		\int_{A_{k_m,\rho_{m+1}}} |\nabla \omega|^{p_1} \, \mathrm{d} x \leq \gamma'2^{ (m+3)(\alpha + p^+)} R^{-p_2} \int_{A_{k_m,\rho_m}} (\omega - k_m)^{p_2} \, \mathrm{d} x + \gamma'_1 |A_{k_m,\rho_m}|.
	\end{align}
	Using \eqref{H27} together with the estimate $0 < \omega(x) - k_m \leq N_0$ for $x \in A_{k_m, \rho_m}$, we infer from \eqref{H27'} that
	\begin{equation}\label{H28}
		\begin{aligned}
		 	\int_{D_{m+1}} |\nabla \omega|^{p_1} \, \mathrm{d} x
		 	& \leq \Big[ \gamma'2^{ (m+3)(\alpha + p^+)} R^{-p_2}N_0^{p_2} + \gamma'_1 \Big] |A_{k_m, \rho_m}| \\
			& \leq \Big[ \gamma'2^{ (m+3)(\alpha + p^+)}N_0^{p_2} R^{-p_2}+\gamma'_1 \Big] R^{N}y_m.
		\end{aligned}
	\end{equation}
	Assume now that \eqref{H25} does not hold, that is, $N_0 > (\gamma'_1)^{\frac{1}{p_1}} R$. Then \eqref{H28} implies
	\begin{equation}\label{H29'}
		\begin{aligned}
			\int_{D_{m+1}} |\nabla \omega|^{p_1} \, \mathrm{d} x
			& \leq \Big[ \gamma'2^{ (m+3)(\alpha + p^+)}N_0^{p_2-p_1} R^{p_1-p_2}+1 \Big]N_0^{p_1} R^{N-p_1}y_m \\
			& \leq C_1 2^{m(\alpha + p^+)} N_0^{p_1} R^{N - p_1} y_m,
		\end{aligned}
	\end{equation}
	where we have used \textnormal{(\hyperlink{H4}{H$_4$})}\textnormal{(\hyperlink{H4i}{i})} and the estimate $N_0<\psi\leq \frac{a_2(M)}{3b_1(M)}$. Here and in the following of the proof, $C_i$ ($i\in\mathbb{N}$) denotes a positive constant depending only on the data, $a_1(M)$, $a_2(M)$ and $b_1(M)$. Applying H\"older's inequality and \eqref{H29'}, we obtain
	\begin{align}\label{H29}
		\int_{D_{m+1}} |\nabla \omega| \, \mathrm{d} x \leq \left(\int_{D_{m+1}} |\nabla \omega|^{p_1} \, \mathrm{d} x\right)^{\frac{1}{p_1}}|D_{m+1}|^{1-\frac{1}{p_1}}\leq C_2 2^{\frac{m(\alpha + p^+)}{p_1}} N_0 R^{N-1}y_m.
	\end{align}
	Next, applying Lemma \ref{L2.6} with $k = k_m$, $l = k_{m+1}$, $\rho = \rho_{m+1}$, and $u = \omega$, it follows that
	\begin{align}\label{H30}
		\int_{D_{m+1}} |\nabla \omega| \, \mathrm{d} x \geq 2^{- (m+2) }\beta^{ - 1} N_0 |A_{k_{m+1}, \rho_{m+1}}|^{1 - \frac{1}{N}} \rho_{m+1}^{-N} |B_{\rho_{m+1}} \setminus A_{k_m, \rho_{m+1}}|.
	\end{align}
	Since $\frac{R}{4}<\rho_m \leq \frac{R}{2}$ and $k_0 \leq k_m$ for all $m\in \N_0$, we have
	\begin{align*}
		B_{\rho_{m+1}} \setminus A_{k_m, \rho_{m+1}} \supset B_{R/4} \setminus A_{k_0,R/4}.
	\end{align*}
	Consequently, \eqref{H23} yields
	\begin{align}\label{H31}
		|B_{\rho_{m+1}} \setminus A_{k_m, \rho_{m+1}}| \geq |B_{R/4}| - |A_{k_0,R/4}| \geq (\omega_N 4^{-N} - \theta') R^N.
	\end{align}
	Choosing $\theta' > 0$ with $\theta' \leq \frac{1}{2}\omega_N4^{-N}$, we infer from \eqref{H30} and \eqref{H31} that
	\begin{align*}
		\int_{D_{m+1}} |\nabla \omega| \, \mathrm{d} x \geq N_0 \omega_N \beta^{-1} 2^{- (m+N+3)} R^{N-1} y_{m+1}^{1 - \frac{1}{N}}.
	\end{align*}
	Comparing this estimate with \eqref{H29}, we arrive at
	\begin{align*}
		y_{m+1}^{1 - \frac{1}{N}} \leq C_3 2^{m \left(\frac{\alpha+p^+}{p^-}+1\right)} y_m.
	\end{align*}
	Thus,
	\begin{align}\label{H32}
		y_{m+1} \leq C_4 b^m y_m^{1 + \varepsilon_0},
	\end{align}
	where
	\begin{align*}
		b = 2^\frac{ (\alpha+2p^+)N}{(N-1)p^-}>1\quad\text{and} \quad \varepsilon_0 = \frac{1}{N-1}>0.
	\end{align*}
	Hence, if \eqref{H23} holds with $\theta'$ defined by
	\begin{align}\label{H33}
		\theta' := \min \left\{ 2^{-1}4^{-N} \omega_N, C_4^{-\frac{1}{\varepsilon_0}} b^{-\frac{1}{\varepsilon_0^2}} \right\},
	\end{align}
	then the sequence $\{y_m\}_{m\in\mathbb{N}_0}$ satisfies the recursive inequality \eqref{H32} and
	\begin{align*}
		y_0 = R^{-N} \left| A_{k_0, R/2} \right| \leq \theta' \leq C_4^{-\frac{1}{\varepsilon_0}} b^{-\frac{1}{\varepsilon_0^2}}.
	\end{align*}
	Therefore, Lemma \ref{leRecur} applies and yields $y_m \to 0$ as $m \to \infty$. Thus, \eqref{H24} holds.

	The next step is to prove the existence of an integer $s' = s'(\text{\rm data},a_1(M),a_2(M),b_1(M)) > 2$ such that, for $\theta'$ defined in \eqref{H33}, either
	\begin{align}\label{H34}
		\psi \leq 2^{s'} (\gamma'_1)^{\frac{1}{p_1}} R
	\end{align}
	or \eqref{H23} holds with
	\begin{align}\label{H35}
		k_0 = \sup_{B_R} \omega - 2^{-s'+1} \psi.
	\end{align}

	Suppose that \eqref{H34} does not hold, that is,
	\begin{align}\label{H36}
		\psi > 2^{s'} (\gamma'_1)^{\frac{1}{p_1}} R,
	\end{align}
	where $s' > 2$ is an integer to be specified later. Define
	\begin{align}\label{H37}
		\tilde{k}_i := \sup_{B_R} \omega - 2^{-i} \psi, \quad \tilde{D}_i:= A_{\tilde{k}_i, R/2} \setminus A_{\tilde{k}_{i+1}, R/2} \quad \text{for } i = 0, 1,  \dots, s' - 1.
	\end{align}
	By the definition of $\psi$ and \eqref{omega} we have
	\begin{align}\label{Atik}
		|A_{\tilde{k}_i, R/2}| \leq \frac{1}{2} |B_{R/2}| \quad \text{for } i = 0, 1, \dots, s' - 1.
	\end{align}
	Applying \eqref{H4} with $t = R$, $\tau = R/2$, and $k = \tilde{k}_i$ for $i = 0,1,2,\dots,s' - 2$, we obtain
	\begin{align*}
		\int_{\tilde{D}_i} |\nabla \omega|^{p_1} \, \mathrm{d} x
		& \leq \gamma' \left(\frac{R}{R-\frac{R}{2}}\right)^{\alpha} \int_{A_{\tilde{k}_i,R}} \left(\frac{\omega - \tilde{k}_i}{R-\frac{R}{2}}\right)^{p_2} \, \mathrm{d} x + \gamma'_1 |A_{\tilde{k}_i,R}| \\
		& \leq \gamma'2^{\alpha+p^+}R^{-p_2}\left(2^{-i}\psi\right)^{p_2}|A_{\tilde{k}_i,R}|+ \gamma'_1 |A_{\tilde{k}_i,R}|.
	\end{align*}
	Together with \eqref{H36}, \textnormal{(\hyperlink{H4}{H$_4$})}\textnormal{(\hyperlink{H4i}{i})}, and the estimate $2^{-i}\psi\leq \sigma M\leq \frac{a_2(M)}{3b_1(M)}$, this gives
	\begin{align}\label{H39}
		\int_{\tilde{D}_i} |\nabla \omega|^{p_1} \, \mathrm{d} x \leq \left(2^{\alpha + p^+ } \gamma' L(p)\left(\frac{a_2(M)}{3b_1(M)}\right)^{p_2 - p_1}+1\right) \omega_N(2^{-i} \psi)^{p_1} R^{N - p_1},
	\end{align}
	where $L(\cdot)$ is defined in \eqref{L}.

	On the other hand, applying Lemma \ref{L2.6} with $\rho = R/2$, $l = \tilde{k}_{i+1}$, $k = \tilde{k}_i$ for $i = 0, 1, 2, \dots, s' - 2$, and $u = \omega$ along with \eqref{Atik}, we find that
	\begin{align*}
		\int_{\tilde{D}_i} |\nabla \omega| \, \mathrm{d} x
		&\geq 2^{-i - 1}\psi \beta^{-1}  (R/2)^{-N} |A_{\tilde{k}_{i+1}, R/2}|^{1 - \frac{1}{N}} \left(|B_{R/2}| - \frac{1}{2} |B_{R/2}|\right)\\
		&= \omega_N \beta^{-1} 2^{-i - 2} \psi |A_{\tilde{k}_{i+1}, R/2}|^{1 - \frac{1}{N}}.
	\end{align*}
	Using H\"{o}lder's inequality and \eqref{H39}, we get, for $i = 0, 1, 2, \dots, s' - 2$,
	\begin{align*}
		|A_{\tilde{k}_{s'-1}, R/2}|^{1 - \frac{1}{N}}
		&\leq |A_{\tilde{k}_{i+1}, R/2}|^{1 - \frac{1}{N}}\\
		& \leq \psi^{-1}\omega_N^{-1} \beta 2^{i+2}\left(\int_{\tilde{D}_i}|\nabla\omega|^{p_1}\right)^{\frac{1}{p_1}} |\tilde{D}_i|^{1-\frac{1}{p_1}} \\
		& \leq 4\beta \omega_N^{\frac{1}{p_1}-1} \left(2^{\alpha + p^+ } \gamma' L(p)\left(\frac{a_2(M)}{3b_1(M)}\right)^{p_2 - p_1}+1\right)^{\frac{1}{p_1}} |\tilde{D}_i|^{\frac{p_1-1}{p_1}} R^{\frac{N - p_1}{p_1}}.
	\end{align*}
	This implies
	\begin{align}\label{H40}
		|A_{\tilde{k}_{s' - 1}, R/2}|^{\frac{(N - 1)p_1}{N(p_1 - 1)}} \leq C_5 |\tilde{D}_i| R^{\frac{N - p_1}{p_1 - 1}}.
	\end{align}
	Summing \eqref{H40}	with respect to $i$ from $0$ to $s'-2$ and using
	\begin{align*}
		\sum_{i=0}^{s'-2}|\tilde{D}_i|\leq |B_{R/2}|,
	\end{align*}
	we arrive at
	\begin{align*}
		|A_{\tilde{k}_{s' - 1}, R/2}|^{\frac{(N - 1)p_1}{N(p_1 - 1)}} \leq \frac{C_5}{s'-1} |B_{R/2}| R^{\frac{N - p_1}{p_1 - 1}}=\frac{C_6}{s'-1} R^{\frac{p_1(N-1)}{p_1 - 1}}.
	\end{align*}
	Therefore,
	\begin{align}\label{H41}
		|A_{\tilde{k}_{s' - 1}, R/2}| \leq \left(\frac{C_6}{s'-1}\right)^{\frac{{N(p_1 - 1)}}{(N - 1)p_1}} R^N.
	\end{align}
	Choosing $s'>2$ sufficiently large so that
	\begin{align*}
		\frac{C_6}{s'-1}<1 \quad\text{and}\quad \left(\frac{C_6}{s'-1}\right)^{\frac{{N(p^- - 1)}}{(N - 1)p^-}} <\theta',
	\end{align*}
	we infer that
	\begin{align*}
		|A_{\tilde{k}_{s' - 1}, R/2}| \leq \theta' R^N.
	\end{align*}
	Thus, \eqref{H23} holds with $k_0$ as in \eqref{H35}. Therefore, as proved above, if \eqref{H34} does not hold, then at least one of \eqref{H24} and \eqref{H25}, with $N_0 = 2^{-s' + 1} \psi$, holds. If \eqref{H24} holds, then
	\begin{align*}
		\sup_{B_{R/4}} \omega \leq \sup_{B_R} \omega - 2^{-s'} \psi = \sup_{B_R} \omega - 2^{-s' }\ell^{ - 1} \operatorname{osc}_{B_R} u.
	\end{align*}
	Moreover,
	\begin{align*}
		\sup_{B_R} \omega - \sup_{B_{R/4}} \omega \leq \operatorname{osc}_{B_R} u - \operatorname{osc}_{B_{R/4}} u.
	\end{align*}
	Hence,
	\begin{align}\label{H42}
		\operatorname{osc}_{B_{R/4}} u \leq (1 - 2^{-s' }\ell^{ - 1}) \operatorname{osc}_{B_R} u.
	\end{align}
	If \eqref{H25} holds with $N_0 = 2^{-s' + 1} \psi$, then
	\begin{align}\label{H43}
		\operatorname{osc}_{B_R} u \leq \ell 2^{s' - 1} (\gamma'_1)^{\frac{1}{p_1}} R.
	\end{align}
	If \eqref{H34} holds, then
	\begin{align*}%\label{H43'}
		\operatorname{osc}_{B_R} u \leq 2^{s'} (\gamma'_1)^{\frac{1}{p_1}} R.
	\end{align*}
	In any case, one of the following alternatives holds:
	\begin{align*}
		\operatorname{osc}_{B_R} u \leq \ell 2^{s'} (\gamma'_1)^{\frac{1}{p_1}} R\quad \text{or}\quad\operatorname{osc}_{B_{R/4}} u \leq (1 - 2^{-s' }\ell^{ - 1}) \operatorname{osc}_{B_R} u.
	\end{align*}

	\textbf{Case \eqref{H5}:} The proof of this case follows the same strategy as that of \eqref{H3}, so we focus only on the differences. We first show that there exists a constant
	\begin{align*}
		\theta'' = \theta''(\text{\rm data}, a_1(M), a_2(M),b_1(M)) > 0
	\end{align*}
	such that, if
	\begin{align}\label{H44}
		\big| A_{k_0, R/2} \big| \leq \theta'' R^N,
	\end{align}
	then at least one of the following two alternatives holds:
	\begin{align}\label{H45}
		\sup_{B_{R/4}} \omega \leq k_0 + \frac{N_0}{2}
	\end{align}
	or
	\begin{align}\label{H46} N_0 \leq (\gamma_1'')^{\frac{1}{p_1}} R,
	\end{align}
	where $0 < N_0 < \psi$, $k_0 = \sup_{B_R} \omega-N_0$, and $\gamma_1''$ is the constant appearing in \eqref{H6}.

	We make use of functions $\mathcal{S}_i$ ($i=1,2$) defined in \eqref{Si}. By monotonicity, convexity, and \eqref{S-ine}, we have
	\begin{align*}
		\frac{\mathcal{S}_1(N_0 R^{-1})}{(N_0 R^{-1})^{p_1}} \leq \frac{\mathcal{S}_1(|\nabla \omega| + N_0 R^{-1})}{(|\nabla \omega| + N_0 R^{-1})^{p_1}}\leq \frac{2^{q^+}\left[\mathcal{S}_1(|\nabla \omega|) + \mathcal{S}_1(N_0 R^{-1})\right]}{(|\nabla \omega| + N_0 R^{-1})^{p_1}}.
	\end{align*}
	It follows that
	\begin{align*}
		|\nabla \omega|^{p_1} \leq \frac{(N_0 R^{-1})^{p_1}}{\mathcal{S}_1(N_0 R^{-1})}2^{q^+}\left[\mathcal{S}_1(|\nabla \omega|) + \mathcal{S}_1(N_0 R^{-1})\right],
	\end{align*}
	and therefore
	\begin{align}\label{H47}
		|\nabla \omega|^{p_1} \leq 2^{q^+} \left[ \frac{(N_0 R^{-1})^{p_1}}{\mathcal{S}_1(N_0 R^{-1})}  \mathcal{S}_1(|\nabla \omega|) + (N_0 R^{-1})^{p_1} \right].
	\end{align}
	Let $m\in\mathbb{N}_0$ and use the same notation as in \eqref{H26} and \eqref{H27}. Then \eqref{H47} gives
	\begin{equation}\label{H48}
		\begin{aligned}
			& \int_{A_{k_m,\rho_{m+1}}} |\nabla \omega|^{p_1} \, \mathrm{d} x\\
			&\leq 2^{q^+} \left[ \frac{(N_0 R^{-1})^{p_1}}{\mathcal{S}_1(N_0 R^{-1})} \int_{A_{k_m,\rho_{m+1}}} \mathcal{S}_1(|\nabla \omega|) \, \mathrm{d} x  + (N_0 R^{-1})^{p_1} |A_{k_m,\rho_{m+1}}| \right].
		\end{aligned}
	\end{equation}
	Applying \eqref{H6} with $k = k_m$, $t = \rho_m$, $\tau = \rho_{m+1}$ and then using \eqref{S-ine}, we have
	\begin{equation}\label{H49}
		\begin{aligned}
		 	\int_{A_{k_m,\rho_{m+1}}} \mathcal{S}_1(|\nabla \omega|) \, \mathrm{d} x
		 	& \leq \gamma'' \int_{A_{k_m,\rho_m}} \mathcal{S}_2 \left(\frac{\omega - k_m}{\rho_m - \rho_{m+1}} \right) \, \mathrm{d} x + \gamma''_1 |A_{k_m,\rho_m}| \\
			& \leq \gamma''2^{(m+3)\left(q^++1\right)} \int_{A_{k_m,\rho_m}} \mathcal{S}_2 \left(N_0R^{-1} \right) \, \mathrm{d} x + \gamma''_1 |A_{k_m,\rho_m}|,
		\end{aligned}
	\end{equation}
	where we have used $\rho_m - \rho_{m+1} = \frac{R}{2^{m  + 3}}$ and $0 < \omega(x) - k_m \leq N_0$ for $x \in A_{k_m, \rho_m}$. Note that
	\begin{align*}
		\mathcal{S}_2 \left(N_0R^{-1} \right)=N_0^{p_2-p_1}R^{p_1-p_2}\left(N_0R^{-1}\right)^{p_1}+b_0N_0^{q_2-q_1}R^{q_1-q_2}\left(N_0R^{-1}\right)^{q_1}\log\left(\delta+\eta N_0R^{-1}\right).
	\end{align*}
	Thus, using \textnormal{(\hyperlink{H4}{H$_4$})}\textnormal{(\hyperlink{H4i}{i})} together with the estimate $N_0 \leq  \frac{a_2(M)}{3 b_1(M)}$, we derive
	\begin{align}\label{H50}
		\mathcal{S}_2 \left(N_0R^{-1} \right)\leq \max \left\{ \left( \frac{a_2(M)}{3 b_1(M)} \right)^{p_2 - p_1}, \left( \frac{a_2(M)}{3 b_1(M)} \right)^{q_2 - q_1} \right\} \max \{ L(p), L(q) \} \mathcal{S}_1(N_0 R^{-1}).
	\end{align}
	Combining \eqref{H49} with \eqref{H50}, we arrive at
	\begin{align*}
		 \int_{A_{k_m,\rho_{m+1}}} \mathcal{S}_1(|\nabla \omega|) \, \mathrm{d} x \leq \left[C_72^{m\left(q^++1\right)} \mathcal{S}_1 \left(N_0R^{-1} \right)  + \gamma''_1\right] |A_{k_m,\rho_m}|.
	\end{align*}
	Substituting this estimate into \eqref{H48}, we get
	\begin{align}\label{H54}
		\int_{D_{m + 1}} |\nabla \omega|^{p_1} \, \mathrm{d} x \leq 2^{q^+} \left[  C_82^{m\left(q^++1\right)}(N_0 R^{-1})^{p_1} +  \gamma''_1  \right]|A_{k_m,\rho_m}|.
	\end{align}
	If \eqref{H46} does not hold, then \eqref{H54} gives
	\begin{align*}
		\int_{D_{m+1}} |\nabla \omega|^{p_1} \, \mathrm{d} x
		& \leq C_{9}2^{(q^++1)m} N_0^{p_1} R^{N - p_1} y_m.
	\end{align*}
	Arguing as in the derivation of \eqref{H32}, with $\theta''\in\left(0, 2^{-1}4^{-N} \omega_N\right)$, we arrive at
	\begin{align*}%\label{H32'}
		y_{m+1} \leq C_{10} \bar{b}^m y_m^{1 + \varepsilon_0},
	\end{align*}
	where
	\begin{align*}
		\bar{b} = 2^\frac{ (q^++1+p^-)N}{(N-1)p^-}>1 \quad \text{and} \quad  \varepsilon_0 = \frac{1}{N-1}>0.
	\end{align*}
	Hence, if \eqref{H44} holds with $\theta''$ defined by
	\begin{align*}%\label{theta''}
		\theta'' := \min \left\{ 2^{-1}4^{-N} \omega_N, C_{10}^{-\frac{1}{\varepsilon_0}} \bar{b}^{-\frac{1}{\varepsilon_0^2}} \right\},
	\end{align*}
	then another application of Lemma \ref{leRecur} yields \eqref{H45}.

	Next, we show that, for $\theta''$ defined above, there exists an integer
	\begin{align*}
		s'' = s''(\text{\rm data},a_1(M),a_2(M),b_1(M)) > 2
	\end{align*}
	such that either
	\begin{align}\label{H54'}
		\psi \leq 2^{s''} (\gamma_1'')^{\frac{1}{p_1}} R,
	\end{align}
	or \eqref{H44} holds with
	\begin{align}\label{H55}
		k_0 = \sup_{B_R} \omega - 2^{- s'' + 1} \psi.
	\end{align}

	Let $s''>2$ be an integer to be specified later. For $i = 0, 1, 2, \dots, s'' - 1$, we define $\tilde{k}_i$ and $\tilde{D}_i$ as in \eqref{H37}. Repeating the argument leading to \eqref{H48}, with $N_0R^{-1}$ replaced by $2^{-i}\psi$, we obtain
	\begin{align}\label{H56}
		\int_{A_{\tilde{k}_i,R/2}} |\nabla \omega|^{p_1} \, \mathrm{d} x \leq 2^{q^+} \left[ \frac{(2^{-i}\psi R^{-1})^{p_1}}{\mathcal{S}_1(2^{-i}\psi R^{-1})} \int_{A_{\tilde{k}_i,R/2}} \mathcal{S}_1(|\nabla \omega|) \, \mathrm{d} x + (2^{-i}\psi R^{-1})^{p_1} |A_{\tilde{k}_i,R/2}|\right].
	\end{align}
	Applying \eqref{H6} with $k = \tilde{k}_i$ for $i = 0,1,2,\dots,s'' -2$, $t = R$, and $\tau = R/2$, we have
	\begin{equation}\label{H64}
		\begin{aligned}
			\int_{A_{\tilde{k}_i,R/2}} \mathcal{S}_1(|\nabla \omega|) \, \mathrm{d} x
			& \leq \gamma'' \int_{A_{\tilde{k}_i,R}} \mathcal{S}_2(2 R^{-1} (\omega - \tilde{k}_i)) \, \mathrm{d} x + \gamma_1'' |A_{\tilde{k}_i,R}| \\
			& \leq 2^{q^++1}\gamma'' \mathcal{S}_2(2^{-i}\psi R^{-1})|A_{\tilde{k}_i,R}| + \gamma_1'' |A_{\tilde{k}_i,R}|,
		\end{aligned}
	\end{equation}
	where we have used that $0<\omega(x)-\tilde{k}_i\leq 2^{-i}\psi$ for $x\in A_{\tilde{k}_i,R}$. Furthermore, by \textnormal{(\hyperlink{H4}{H$_4$})}\textnormal{(\hyperlink{H4i}{i})} and the estimate $2^{-i}\psi\leq \sigma M\leq\frac{a_2(M)}{3b_1(M)}$, we infer that
	\begin{align*}
		\mathcal{S}_2(2^{-i}\psi R^{-1}) &\leq \max\left\{ (2^{-i}\psi)^{p_2 - p_1}, (2^{-i}\psi)^{q_2 - q_1} \right\} \max\{ L(p), L(q) \} \mathcal{S}_1(2^{-i}\psi)\\
		&\leq \max\left[\left(\frac{a_2(M)}{3b_1(M)}\right)^{q^--p^+}+1\right]\max\{ L(p), L(q) \} \mathcal{S}_1(2^{-i}\psi)
	\end{align*}
	for $i = 0,1,2,\dots,s'' - 2$. Substituting this estimate into \eqref{H64}, we arrive at
	\begin{align}\label{H57}
		\int_{A_{\tilde{k}_i,R/2}} \mathcal{S}_1(|\nabla \omega|) \, \mathrm{d} x \leq C_{11} \left(\mathcal{S}_1(2^{-i}\psi R^{-1})  + \gamma_1''\right) |A_{\tilde{k}_i,R}|.
	\end{align}
	Combining \eqref{H56} and \eqref{H57}, we obtain
	\begin{align*}
		\int_{A_{\tilde{k}_i, R/2}} |\nabla \omega|^{p_1} \, \mathrm{d} x \leq 2^{q^+} \left[ C_{12}(2^{-i}\psi)^{p_1} R^{-p_1} |A_{\tilde{k}_i, R}| + \gamma_{1}'' |A_{\tilde{k}_i, R}|\right].
	\end{align*}
	If \eqref{H54'} does not hold, then the preceding estimate yields
	\begin{align}\label{H58}
		\int_{A_{\tilde{k}_i, R/2}} |\nabla \omega|^{p_1} \, \mathrm{d} x \leq C_{13}(2^{-i}\psi)^{p_1} R^{N - p_1}\quad \text{for }i = 0,1,2,\dots,s'' - 2.
	\end{align}
	On the other hand, applying Lemma \ref{L2.6} with $u = \omega$, $\rho = R/2$, $k = \tilde{k}_i$, and $l = \tilde{k}_{i+1}$ for $i = 0,1,2,\dots,s'' - 2$, we arrive at
	\begin{align}\label{H59}
		|A_{\tilde{k}_{i+1}, R/2}|^{1 - \frac{1}{N}} \leq 4 \beta \omega_{N}^{-1} (2^{-i}\psi)^{-1} \int_{\tilde{D}_i} |\nabla \omega| \, \mathrm{d} x,
	\end{align}
	where we have used the estimate $|A_{\tilde{k}_i, R/2}| \leq \frac{1}{2} |B_{R/2}|$ arguing exactly as in the derivation of \eqref{H31}.

	Repeating the arguments that led to \eqref{H41} and invoking \eqref{H58} and \eqref{H59}, we conclude that there exists an integer $s'' > 2$ such that \eqref{H44} is satisfied with $k_0$ given by \eqref{H55}. As shown above, at least one of \eqref{H45} and \eqref{H46} is valid with $N_0=2^{- s''+1}\psi$. Proceeding exactly as in the derivations of \eqref{H42} and \eqref{H43}, we conclude that either
	\begin{align*}
		\operatorname{osc}_{B_{R/4}} u \leq (1 - 2^{-s'' }\ell^{ - 1}) \operatorname{osc}_{B_R} u\quad \text{or}\quad\operatorname{osc}_{B_R} u \leq \ell 2^{s''-1} (\gamma''_1)^{\frac{1}{p_1}} R,
	\end{align*}
corresponding to the validity of \eqref{H45} or \eqref{H46}, respectively.

If \eqref{H54'} holds, then we obtain
$$\operatorname{osc}_{B_R} u \leq \ell 2^{s''} (\gamma''_1)^{\frac{1}{p_1}} R.$$

	We have therefore established that for every $0 < R \leq R'$, at least one of the following alternatives holds:
	\begin{align*}
		\operatorname{osc}_{B_{R/4}} u \leq \theta \operatorname{osc}_{B_R} u, \quad \operatorname{osc}_{B_R} u \leq c_0 R,
	\end{align*}
	where we set
	\begin{align*}
		\theta := \max\{1 - \ell^{-1} 2^{-s'}, 1 - \ell^{-1} 2^{-s''} \}, \quad c_0 := \max\left\{\ell 2^{s'} (\gamma_1'+1)^{\frac{1}{p^-}}, \ell 2^{s''} (\gamma_1''+1)^{\frac{1}{p^-}}\right\}.
	\end{align*}
	An application of Lemma \ref{L2.7} with $\rho_0 = R'$, $\rho = \frac{R}{4}$, $\varepsilon=1$, and $b = 4$ completes the proof.
\end{proof}

%********************************************************************
\subsection{Boundary H\"older continuity}
%********************************************************************

In this subsection, we establish the H\"{o}lder continuity of $u$ in $\Omega_R$, where $y\in \partial\Omega$ and
$0<R<\operatorname{diam}(\Omega)$. Since the argument closely parallels the proof of the interior H\"{o}lder continuity developed in the previous subsection, we only outline the main steps and indicate the necessary modifications.

Let $y \in \partial\Omega$  and $0<R<\operatorname{diam}(\Omega)$.  We have the following Caccioppoli-type inequality.

\begin{lemma}\label{Le-HCac.B}
	Let $0 < \tau < t \leq R$ and let $\omega = \pm u$. Then, for any
	\begin{align}\label{BH.k}
		k \geq \max\left\{\sup_{\Omega_R}\omega-\sigma M,\sup_{(\partial\Omega)_R}\omega\right\},
	\end{align}
	where $\sigma $ is given by \eqref{H2}, the following assertions hold:
	\begin{enumerate}
		\item[\textnormal{(i)}]
			If
			\begin{align}\label{H3*}
				b_0 \leq 4[b]_{0,\alpha} R^\alpha,
			\end{align}
			then
			\begin{align}\label{H4*}
				\int_{A_{k,\tau}} |\nabla \omega|^{p_1} \, \mathrm{d} x \leq \gamma' \left(\frac{R}{t - \tau}\right)^{\alpha} \int_{A_{k,t}} \left(\frac{\omega - k}{t - \tau}\right)^{p_2} \, \mathrm{d} x + \gamma'_1 |A_{k,t}|,
			\end{align}
			where
			\begin{align*}
				\gamma' = \gamma'\left(\text{\rm data}, a_1(M),a_2(M),b_1(M)\right), \quad \gamma'_1 =\gamma_1'\left(\text{\rm data}, a_1(M),a_2(M)\right).
			\end{align*}
		\item[\textnormal{(ii)}]
			 If
			\begin{align}\label{H5*}
				b_0 > 4[b]_{0,\alpha} R^\alpha,
			\end{align}
			then
			\begin{align}\label{H6*}
				\int_{A_{k,\tau}} \mathcal{S}_1(|\nabla \omega|) \, \mathrm{d} x \leq \gamma'' \int_{A_{k,t}} \mathcal{S}_2 \left(\frac{\omega - k}{t - \tau} \right) \, \mathrm{d} x + \gamma''_1 |A_{k,t}|,
			\end{align}
			where
			\begin{align*}
				\gamma'' = \gamma''(\text{\rm data}, a_1(M),a_2(M)), \quad
				\gamma''_1 = \gamma''_1(\text{\rm data}, a_1(M),a_2(M)).
			\end{align*}
	\end{enumerate}
\end{lemma}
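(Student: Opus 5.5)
The plan is to repeat the proof of Proposition~\ref{Le-HCac.I} almost verbatim. The only genuinely new issue is the admissibility of the test function near the boundary. Fix $0<\tau<t\leq R$ and $k$ satisfying \eqref{BH.k}, and take the same cut-off $\zeta$ as in \eqref{zeta}. We test \eqref{E-V} with
\begin{align*}
	\varphi=\zeta^{\mathfrak{s}^+}(\omega-k)_+ ,
\end{align*}
extended by zero outside $\Omega_R$.

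\textbf{Admissibility.} Since $k\geq \sup_{(\partial\Omega)_R}\omega$, the definition of boundary inequalities from Section~\ref{Section-2} gives $(\omega-k)_+\in W^{1,1}_0(\overline{\Omega}\setminus(\partial\Omega)_R)$ locally in $B_R$. Hence $(\omega-k)_+$ has zero trace on the portion $(\partial\Omega)_R$. Multiplying by $\zeta^{\mathfrak{s}^+}$, whose support lies in $B_t\subset B_R$, makes it vanish near $\partial B_t\cap\Omega$ as well, so $\varphi\in W^{1,1}_0(\Omega)$. Lemma~\ref{L2.12} can be used to justify this concretely: the extension $\hat\omega^{(k)}-k$ is a $W^{1,m}(B_R)$ function vanishing outside $\Omega_R$.

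Membership $\varphi\in W^{1,\mathcal{S}}(\Omega)$ follows as in the interior case. We have $\nabla\varphi=\zeta^{\mathfrak{s}^+}\nabla\omega\,\chi_{\{\omega>k\}}+\mathfrak{s}^+\zeta^{\mathfrak{s}^+-1}(\omega-k)_+\nabla\zeta$, where $u\in W^{1,\mathcal{S}}(\Omega)\cap L^\infty(\Omega)$ and $\zeta$ is Lipschitz. The (aDec) property of $\mathcal{S}$ then controls $\mathcal{S}(x,|\nabla\varphi|)$. This step is the only real obstacle, and it is a matter of bookkeeping with the boundary-order definitions rather than new estimates. All integrals now live on $A_{k,r}:=\{x\in\Omega_r\colon \omega(x)>k\}$ instead of subsets of $B_r$.

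\textbf{Energy estimate.} With admissibility settled, I would copy the chain \eqref{H7}--\eqref{H14} unchanged. The bound \eqref{H7-1} comes from (H$_5$)(ii), using $|u|\leq M$ and the monotonicity of $a_2$. The flux term is handled by Lemma~\ref{LConjugate}(i)--(iii) with the same choice of $\varepsilon$, giving \eqref{H9-1} and \eqref{H9-2}. For the reaction term, (H$_5$)(iii) together with $0<\omega-k\leq\sigma M\leq a_2(M)/(3b_1(M))$ gives \eqref{H7-3}; this uses $k\geq\sup_{\Omega_R}\omega-\sigma M$. Absorbing terms then yields \eqref{H14} with $A_{k,\cdot}$ read relative to $\Omega_\cdot$.

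\textbf{The two cases.} These depend only on pointwise properties of $b$ on $B_R\cap\Omega$ and on \eqref{pq}.

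In case \eqref{H3*}, shrink $R$ so that $b<d/2$, and hence $a>d/2$, on $\Omega_R$. Then $b\leq 6[b]_{0,\alpha}R^\alpha$ there, and the computation \eqref{H15} together with $|\nabla\omega|^{p_1}\leq \frac2d\mathcal{S}(x,|\nabla\omega|)+1$ gives \eqref{H4*}.

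In case \eqref{H5*}, we have $b\leq\frac32 b_0$ on $\Omega_R$. Using \eqref{e1}, \eqref{e2} and the bound $\log(\delta+\eta s)\leq s^{p_2}+C$, we obtain \eqref{H6*}.

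The constants depend on the same quantities as before, since nothing in these estimates used $B_R\subset\Omega$.
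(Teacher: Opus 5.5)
Your proposal is correct and follows the paper's argument. The paper also reruns the proof of Proposition~\ref{Le-HCac.I} verbatim, and its only new ingredient is that $\varphi=\zeta^{\mathfrak{s}^+}(\omega-k)_+\in W^{1,\mathcal{S}}(\Omega)\cap W_0^{1,1}(\Omega)$ because $k\geq\sup_{(\partial\Omega)_R}\omega$ in \eqref{BH.k}; your justification of this admissibility (through the definition of boundary inequalities, or alternatively Lemma~\ref{L2.12}) is a more detailed version of the paper's one-line remark.
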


\begin{proof}
	The proof follows the same lines as that of Proposition \ref{Le-HCac.I}, noting $\varphi = \zeta^{\mathfrak{s}^+} (\omega-k)_+\in W^{1,\mathcal{S}}(\Omega)\cap W_0^{1,1}(\Omega)$ thanks to \eqref{BH.k}.
\end{proof}

\begin{lemma}\label{Le-HR.B}
	Let $R' \in (0,1)$ satisfy $R'<\operatorname{diam}(\Omega)$. Assume furthermore that
	\begin{align}\label{ocs-b}
		\operatorname{osc}_{(\partial\Omega)_R}u<\alpha_0R^{\beta_0}\quad \text{for all }0<R<R',
	\end{align}
	where $\beta_0\in (0,1]$ and $\alpha_0>0$ are given constants. Then there exist constants $\beta'\in (0,\beta_0]$ and $C > 0$ such that
	\begin{align*}%\label{HL2-A2*}
		\operatorname{osc}_{\Omega_R} u \leq C R'^{-\beta'} R^{\beta'}
	\end{align*}
	for any $0 < R \leq R'$.
\end{lemma}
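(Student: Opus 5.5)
The plan is to mimic the proof of Lemma \ref{Le-HR.I} on the portions $\Omega_R=\Omega\cap B_R(y)$, $y\in\partial\Omega$, using Lemma \ref{Le-HCac.B} instead of Proposition \ref{Le-HCac.I}. The goal is to show that for every $0<R\le R'$ at least one of
\begin{align*}
	\operatorname{osc}_{\Omega_{R/4}} u \leq \theta \operatorname{osc}_{\Omega_R} u, \qquad \operatorname{osc}_{\Omega_R} u \leq c_0 R^{\beta_0}
\end{align*}
holds, and then to apply Lemma \ref{L2.7} with $\rho_0=R'$, $b=4$, $\varepsilon=\beta_0$. This yields the claim with $\beta'=\min\{\beta_0,-\log_4\theta\}\le\beta_0$.

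First I will set $\psi:=\ell^{-1}\operatorname{osc}_{\Omega_R}u$ as before. If $\psi\le 2\alpha_0R^{\beta_0}$, then the second alternative holds and there is nothing to prove. Otherwise, by \eqref{ocs-b}, $\operatorname{osc}_{(\partial\Omega)_R}u<\tfrac{\ell}{2}\psi$. Since $\ell\ge2$, we may choose $\omega=\pm u$ (the sign for which $\sup_{(\partial\Omega)_R}\omega$ sits in the lower part of the range) so that
\[
\sup_{(\partial\Omega)_R}\omega\le \sup_{\Omega_R}\omega-\psi .
\]
To keep this compatible with the $2^{-s}$-levels, I will instead compare with $\tfrac{1}{2}\operatorname{osc}$ and adjust $\ell$ if needed. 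With this choice, every level $k\ge \sup_{\Omega_R}\omega-\psi$ satisfies \eqref{BH.k}, because $\psi\le\sigma M$. Hence the Caccioppoli inequalities \eqref{H4*} and \eqref{H6*} hold for all the levels $k_m$ and $\tilde k_i$ used in the iteration.

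The key replacement for the density condition \eqref{omega} is the extension of Lemma \ref{L2.12}. For $k\ge\sup_{(\partial\Omega)_R}\omega$, the function $\hat\omega^{(k)}$ (equal to $\max\{\omega,k\}$ in $\Omega_R$ and to $k$ on $B_R\setminus\Omega$) lies in $W^{1,1}(B_R)$. Its superlevel sets $\{\hat\omega^{(k)}>l\}$ coincide with $A_{l,\rho}\subset\Omega_\rho$ for $l>k$. Since $\Omega$ is Lipschitz, there is $\vartheta>0$ with $|B_\rho\setminus\Omega|\ge\vartheta|B_\rho|$ for all small $\rho$. Therefore $|B_\rho\setminus A_{k,\rho}|\ge\vartheta\omega_N\rho^N$ automatically, and Lemma \ref{L2.6} can be applied to $\hat\omega^{(k)}$ on the balls $B_{\rho_{m+1}}$ and $B_{R/2}$ exactly as in the interior case, with $\tfrac12$ replaced by $\vartheta$. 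This removes the need to choose $\omega$ through \eqref{omega}.

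With these two modifications, the two-step argument of Lemma \ref{Le-HR.I} carries over verbatim in both cases \eqref{H3*} and \eqref{H5*}. The first step is the De Giorgi iteration \eqref{H32} giving $\sup_{\Omega_{R/4}}\omega\le k_0+N_0/2$ when $|A_{k_0,R/2}|\le\theta' R^N$, or else $N_0\le(\gamma_1')^{1/p_1}R$. The second step is the counting argument over $\tilde D_i$ producing $s'$. The conclusion is either $\operatorname{osc}_{\Omega_{R/4}}u\le(1-\ell^{-1}2^{-s})\operatorname{osc}_{\Omega_R}u$ or $\operatorname{osc}_{\Omega_R}u\le cR\le cR^{\beta_0}$, using $R<1$. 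Here the constants depend in addition on $\vartheta$ and $\alpha_0$.

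I expect the main obstacle to be the bookkeeping in the first paragraph. The boundary oscillation and the interior oscillation must be reconciled so that all levels used lie simultaneously above $\sup_{\Omega_R}\omega-\sigma M$ and above $\sup_{(\partial\Omega)_R}\omega$. It must also be checked that the $(\partial\Omega)_R$ condition transfers correctly to the smaller radii $\rho_m$ in the iteration, which it does since $(\partial\Omega)_{\rho}\subset(\partial\Omega)_R$.
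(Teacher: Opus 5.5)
Your proposal is correct and follows the paper's proof essentially step by step. The fat-complement property of the Lipschitz domain, combined with the extension $\hat\omega^{(k)}$ from Lemma \ref{L2.12}, replaces the density condition \eqref{omega} in Lemma \ref{L2.6}; the two-step argument of Lemma \ref{Le-HR.I} is repeated in both cases \eqref{H3*} and \eqref{H5*}; and Lemma \ref{L2.7} is applied with $\varepsilon=\beta_0$ and $b=4$.

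The differences are cosmetic. The paper works with the levels $\max\{\sup_{\Omega_R}\omega,\sup_{(\partial\Omega)_R}\omega+\psi\}-N_0$ for both signs and selects $\omega$ only at the end, under $\operatorname{osc}_{\Omega_R}u>4\alpha_0R^{\beta_0}$, whereas you fix the sign at the start. For your up-front selection, $\ell\ge 2$ does not suffice, but the adjustment you flag does. Taking $\ell\ge4$, as the paper does with $\ell=\max\{4,2/\sigma\}$, makes it work.
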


\begin{proof}
	Let $\sigma$ be as in \eqref{H2} and define
	\begin{align}\label{Hpsi*}
		\psi := \ell^{-1} \operatorname{osc}_{\Omega_R} u\quad\text{with}\quad \ell = \max\left\{4, \frac{2}{\sigma}\right\}.
	\end{align}
	Since $\Omega$ is a Lipschitz domain, it possesses the fat complement property. More precisely, there exists a constant $\kappa=\kappa(\Omega)\geq 1$ such that
	\begin{align}\label{Fat}
		|B_r(z)|\leq \kappa|B_r(z)\setminus\Omega|\quad \text{for all } z\in\partial\Omega\text{ and } r\in (0,\operatorname{diam}(\Omega)).
	\end{align}
	We distinguish between the cases \eqref{H3*} and \eqref{H5*}.

	\textbf{Case \eqref{H3*}:} We first show that there exists a constant
	\begin{align*}
		\theta' = \theta'(\text{\rm data},a_1(M),a_2(M),b_1(M)) > 0
	\end{align*}
	such that if
	\begin{align}\label{H23*}
		| A_{k_0,R/2}| \leq \theta' R^N,
	\end{align}
	then either

	\begin{align}\label{H24*}
		\sup_{\Omega_{R/4}} \omega \leq k_0 + \frac{N_0}{2},
	\end{align}
	or
	\begin{align}\label{H25*}
		N_0 \leq (\gamma'_1)^{\frac{1}{p_1}} R,
	\end{align}
	where $0 < N_0 < \psi$, $k_0 = \max\{\sup_{\Omega_R} \omega, \sup_{(\partial\Omega)_R}\omega+\psi\} - N_0$, and $\gamma'_1$ is the constant appearing in \eqref{H4*}.

	To this end, for $m \in \N_0$, define
	\begin{equation}\label{B.seq}
		\begin{aligned}
			\rho_m &= \left(\frac{1}{4} + \frac{1}{2^{m+2}}\right) R, \quad k_m = k_0 + \frac{N_0}{2} - \frac{N_0}{2^{m+1}},\\%\label{H26*}\\
		D_{m+1} &= A_{k_m, \rho_{m+1}} \setminus A_{k_{m+1}, \rho_{m+1}} \quad \text{and}\quad  y_m = R^{-N} |A_{k_m, \rho_{m}}|.
		\end{aligned}
	\end{equation}
	It is clear that $k_m \geq \max\{\sup_{\Omega_R} \omega - \sigma M, \sup_{(\partial\Omega)_R}\omega\}$. Proceeding as in the proof of Lemma \ref{Le-HR.I}, we apply \eqref{H4*} with $k = k_m$, $t = \rho_m$, and $\tau = \rho_{m+1}$. This yields
	\begin{align}\label{H28*}
		\int_{D_{m+1}} |\nabla \omega|^{p_1} \, \mathrm{d} x\leq \Big[ \gamma'2^{ (m+3)(\alpha + p^+)}N_0^{p_2} R^{-p_2}+\gamma'_1 \Big] R^{N}y_m.
	\end{align}
	Assume now that \eqref{H25*} does not hold, that is, $N_0 \geq (\gamma'_1)^{\frac{1}{p_1}} R$. Then, combining \eqref{H28*} with H\"{o}lder's inequality, we obtain
	\begin{align}\label{H29*}
		\int_{D_{m+1}} |\nabla \omega| \, \mathrm{d} x \leq \left(\int_{D_{m+1}} |\nabla \omega|^{p_1} \, \mathrm{d} x\right)^{\frac{1}{p_1}}|D_{m+1}|^{1-\frac{1}{p_1}}\leq C 2^{\frac{m(\alpha + p^+)}{p_1}} N_0 R^{N-1}y_m.
	\end{align}
	Here and throughout the remainder of the proof, $C$ denotes a positive constant depending only on the data and on $a_1(M)$, $a_2(M)$ and $b_1(M)$. Its value may vary from line to line.

	On the other hand, since $k_m>\sup_{(\partial\Omega)_R}\omega\geq \sup_{(\partial\Omega)_{\rho_m}}\omega$, the function
	\begin{align*}
		\hat{\omega}^{(k_m)}(x) =
		\begin{cases}
			\max\left\{\omega(x),k_m\right\}, & x \in \Omega_{\rho_m},  \\
			k_m, & x \in B_{\rho_m}\setminus\Omega_{\rho_m},
		\end{cases}
	\end{align*}
	belongs to $W^{1,1}(B_{\rho_m})$ in view of Lemma~\ref{L2.12}. Therefore, we may apply Lemma \ref{L2.6} with $k = k_m$, $l = k_{m+1}$, $\rho = \rho_{m+1}$, and $u = \hat{\omega}^{(k_m)}$, which gives
	\begin{align*}
		(k_{m+1} - k_m) |\hat{A}_{k_{m+1},\rho_{m+1}}|^{1 - \frac{1}{N}} \leq\frac{\beta\rho_{m+1}^N}{|B_{\rho_{m+1}} \setminus \hat{A}_{k_m,\rho_{m+1}}|}  \int_{\hat{A}_{k_{m},\rho_{m+1}} \setminus \hat{A}_{k_{m+1},\rho_{m+1}}} |\nabla \hat{\omega}^{(k_m)}| \, \mathrm{d} x,
	\end{align*}
	where $\hat{A}_{t,\rho_{m+1}} := \{ x \in B_{\rho_{m+1}} \colon \hat{\omega}^{(k_m)}(x) > t \}$ for $t\in\R$ and $\beta > 1$ is a constant depending only on $N$. Moreover, it is straightforward to verify that
	\begin{align*}
		&A_{k_{m+1},\rho_{m+1}}\subset\hat{A}_{k_{m+1},\rho_{m+1}},\\
		&B_{\rho_{m+1}} \setminus \Omega_{\rho_{m+1}}\subset B_{\rho_{m+1}} \setminus \hat{A}_{k_{m},\rho_{m+1}},\\
		&\hat{A}_{k_{m},\rho_{m+1}} \setminus \hat{A}_{k_{m+1},\rho_{m+1}}\subset D_{m+1}.
	\end{align*}
	Taking these inclusions into account together with \eqref{Fat}, we infer from the previous inequality that
	\begin{align}\label{H30*}
		\int_{D_{m+1}} |\nabla \omega| \, \mathrm{d} x \geq \kappa^{-1} \beta^{ - 1} N_0 2^{- (N+2) }\omega_N2^{-m}R^{N-1}y_{m+1}^{1-\frac{1}{N}}.
	\end{align}
	Combining this estimate with \eqref{H29*}, we arrive at
	\begin{align*}
		y_{m+1}^{1 - \frac{1}{N}} \leq C 2^{m \left(\frac{\alpha+p^+}{p^-}+1\right)} y_m.
	\end{align*}
	Consequently,
	\begin{align}\label{H32*}
		y_{m+1} \leq C b^m y_m^{1 + \varepsilon_0},
	\end{align}
	where
	\begin{align*}
		b = 2^\frac{ (\alpha+2p^+)N}{(N-1)p^-}>1\quad\text{and} \quad  \varepsilon_0 = \frac{1}{N-1}>0.
	\end{align*}
	Therefore, if \eqref{H23*} holds with $\theta'$ given by
	\begin{align}\label{H33*}
		\theta' := C^{-\frac{1}{\varepsilon_0}} b^{-\frac{1}{\varepsilon_0^2}},
	\end{align}
	then the sequence $\{y_m\}_{m\in\mathbb{N}_0}$ satisfies the recursive inequality \eqref{H32*} with
	\begin{align*}
		y_0= R^{-N} \left| A_{k_0, R/2} \right| \leq \theta' =C^{-\frac{1}{\varepsilon_0}} b^{-\frac{1}{\varepsilon_0^2}}.
	\end{align*}
	Applying Lemma \ref{leRecur}, we conclude that $y_m \to 0$ as $m \to \infty$, and thus, \eqref{H24*} holds.

	Next, we prove that, for $\theta'$ given in \eqref{H33*}, there is an integer $s' = s'(\text{\rm data}, a_2(M),b_1(M)) > 2$ such that either
	\begin{align}\label{H34*}
		\psi \leq 2^{s'} (\gamma'_1)^{\frac{1}{p_1}} R,
	\end{align}
	or \eqref{H23*} holds with
	\begin{align}\label{H35*}
		k_0 = \max\left\{\sup_{\Omega_R} \omega, \sup_{(\partial\Omega)_R} \omega+\psi\right\} - 2^{-s'+1} \psi.
	\end{align}
	Suppose that \eqref{H34*} does not hold, namely,
	\begin{align*}%\label{H36*}
		\psi > 2^{s'} (\gamma'_1)^{\frac{1}{p_1}} R,
	\end{align*}
	where $s' > 2$ is an integer to be specified later. Define
	\begin{align}\label{H37*}
		\tilde{k}_i := \max\left\{\sup_{\Omega_R} \omega, \sup_{(\partial\Omega)_R} \omega+\psi\right\} - 2^{-i} \psi, \quad \tilde{D}_i:= A_{\tilde{k}_i, R/2} \setminus A_{\tilde{k}_{i+1}, R/2}
	\end{align}
	for $i = 0, 1,  \dots, s' - 1$. Clearly, $\tilde{k}_i\geq \max\left\{\sup_{\Omega_R}\omega-\sigma M,\sup_{(\partial\Omega)_R}\omega\right\}$ for $i = 0,1,2,\dots,s' - 1$. Proceeding as in the proof of Lemma \ref{Le-HR.I}, we apply \eqref{H4*} with $t = R$, $\tau = R/2$, and $k = \tilde{k}_i$ in order to obtain
	\begin{align*}%\label{H39*}
		\int_{\tilde{D}_i} |\nabla \omega|^{p_1} \, \mathrm{d} x \leq \left(2^{\alpha + p^+ } \gamma' L(p)\left(\frac{a_2(M)}{3b_1(M)}\right)^{p_2 - p_1}+1\right) \omega_N(2^{-i} \psi)^{p_1} R^{N - p_1}.
	\end{align*}
	On the other hand, since $\tilde{k}_i>\sup_{(\partial\Omega)_R}\omega\geq \sup_{(\partial\Omega)_{R/2}}\omega$, the function
	\begin{align*}
		\hat{\omega}^{(\tilde{k}_i)}(x) =
		\begin{cases}
			\max\left\{\omega(x),\tilde{k}_i\right\}, & x \in \Omega_{R/2}, \\
			\tilde{k}_i, & x \in B_{R/2}\setminus\Omega_{R/2},
		\end{cases}
	\end{align*}
	belongs to $W^{1,1}(B_{R/2})$ in view of Lemma~\ref{L2.12}. Thus, applying Lemma \ref{L2.6} with $\rho = R/2$, $l = \tilde{k}_{i+1}$, $k = \tilde{k}_i$ for $i = 0, 1, 2, \dots, s' - 2$, and $u = \hat{\omega}^{(\tilde{k}_i)}$, we obtain
	\begin{align}\label{H39*'}
		(\tilde{k}_{i+1} - \tilde{k}_i) |\hat{A}_{\tilde{k}_{i+1},R/2}|^{1 - \frac{1}{N}} \leq\frac{\beta(R/2)^N}{|B_{R/2} \setminus \hat{A}_{k_i,R/2}|}  \int_{\hat{A}_{\tilde{k}_i,R/2} \setminus \hat{A}_{\tilde{k}_{i+1},R/2}} |\nabla \hat{\omega}^{(\tilde{k}_i)}| \, \mathrm{d} x,
	\end{align}
	where $\hat{A}_{t,R/2} := \{ x \in B_{R/2} \colon \hat{\omega}^{(\tilde{k}_i)}(x) > t \}$ for $t\in\R$ and $\beta > 1$ depends only on $N$. Furthermore,
	\begin{align*}
		A_{\tilde{k}_{i+1},R/2}\subset\hat{A}_{\tilde{k}_{i+1},R/2},\quad B_{R/2} \setminus \Omega_{R/2}\subset B_{R/2} \setminus \hat{A}_{\tilde{k}_{i},R/2},\quad \hat{A}_{\tilde{k}_i,R/2} \setminus \hat{A}_{\tilde{k}_{i+1},R/2}\subset \tilde{D}_i.
	\end{align*}
	Proceeding exactly as in the derivation of \eqref{H30*}, we infer from \eqref{H39*'} that
	\begin{align}\label{5.122}
		\int_{\tilde{D}_i} |\nabla \omega| \, \mathrm{d} x \geq  \kappa^{-1}\omega_N \beta^{-1} 2^{-i - 1} \psi |A_{\tilde{k}_{i+1}, R/2}|^{1 - \frac{1}{N}},\quad \text{for} \ \ i = 0, 1, 2, \dots, s' - 2.
	\end{align}
	Then, arguing as in the proof of Lemma \ref{Le-HR.I}, we arrive at
	\begin{align}\label{H41*}
		|A_{\tilde{k}_{s' - 1}, R/2}| \leq \left(\frac{C}{s'-1}\right)^{\frac{{N(p_1 - 1)}}{(N - 1)p_1}} R^N.
	\end{align}
	Choosing $s'>2$ sufficiently large so that
	\begin{align*}
		\frac{C}{s'-1}<1 \quad\text{and}\quad \left(\frac{C}{s'-1}\right)^{\frac{{N(p^- - 1)}}{(N - 1)p^-}} <\theta',
	\end{align*}
	it follows that
	\begin{align*}
		|A_{\tilde{k}_{s' - 1}, R/2}| \leq \theta' R^N.
	\end{align*}
	Thus, \eqref{H23*} holds with $k_0$ as in \eqref{H35*}. Therefore, as shown above, if \eqref{H34*} does not hold, then at least one of \eqref{H24*} and \eqref{H25*}, with $N_0 = 2^{-s' + 1} \psi$, holds. Let us first consider the case that \eqref{H24*} holds. If $ \operatorname{osc}_{\Omega_R}u>4\alpha_0R^{\beta_0}$, then assumption \eqref{ocs-b} implies that
	\begin{align*}
		\operatorname{osc}_{(\partial\Omega)_R}u<\frac{1}{4}\operatorname{osc}_{\Omega_R}u.
	\end{align*}
	Consequently, at least one of the two functions $\omega=\pm u$ satisfies
	\begin{align}\label{pmu}
		\frac{1}{4}\operatorname{osc}_{\Omega_R}u\leq \sup_{\Omega_R}\omega-\sup_{(\partial\Omega)_R}\omega.
	\end{align}
	In the following, we denote by $\omega$  the function $u$ or $-u$ that satisfies \eqref{pmu}. In view of \eqref{pmu} and \eqref{Hpsi*}, we have
	\begin{align*}
		\sup_{(\partial\Omega)_R}\omega+\psi\leq \sup_{\Omega_R} \omega.
	\end{align*}
	Using this and arguing as in the derivation of \eqref{H42}, we arrive at
	\begin{align*}
		\operatorname{osc}_{\Omega_{R/4}} u \leq (1 - 2^{-s' }\ell^{ - 1}) \operatorname{osc}_{\Omega_R} u.
	\end{align*}
	Therefore, if \eqref{H24*} holds, then at least one of the following alternatives is valid:
	\begin{align}\label{H42*}
		\operatorname{osc}_{\Omega_R}u\leq 4\alpha_0R^{\beta_0} \quad \text{or} \quad \operatorname{osc}_{\Omega_{R/4}} u \leq (1 - 2^{-s' }\ell^{ - 1}) \operatorname{osc}_{\Omega_R} u.
	\end{align}
	On the other hand, if \eqref{H25*} holds with $N_0 = 2^{-s' + 1} \psi$, then
	\begin{align*}%\label{H43*}
		\operatorname{osc}_{\Omega_R} u \leq \ell 2^{s' - 1} (\gamma'_1)^{\frac{1}{p_1}} R\leq \ell 2^{s' - 1} (\gamma'_1)^{\frac{1}{p_1}} R^{\beta_0}.
	\end{align*}
	Finally, if \eqref{H34*} holds, then
	\begin{align*}%\label{H43**}
		\operatorname{osc}_{\Omega_R} u \leq \ell 2^{s'} (\gamma'_1)^{\frac{1}{p_1}} R\leq \ell 2^{s'} (\gamma'_1)^{\frac{1}{p_1}} R^{\beta_0}.
	\end{align*}
	In conclusion, for the case of \eqref{H3*}, at least one of the following alternatives is valid:
	\begin{align*}
		\operatorname{osc}_{\Omega_R} u \leq \bar{c}_0 R^{\beta_0}\quad \text{or}\quad \operatorname{osc}_{\Omega_{R/4}} u \leq \bar{\theta} \operatorname{osc}_{\Omega_R} u,
	\end{align*}
	where $\bar{c}_0=\max\left\{4\alpha_0, \ell 2^{s'} (\gamma'_1)^{\frac{1}{p_1}}\right\}$ and $\bar{\theta}=1 - 2^{-s' }\ell^{ - 1}$.

	\vskip5pt
	\textbf{Case \eqref{H5*}:} The proof proceeds in much the same way as in the treatment of case \eqref{H5} in the proof of Lemma \ref{Le-HR.I}. We therefore focus only on the modifications required in the boundary setting. We first show that there exists a constant
	\begin{align*}
		\theta'' = \theta''(\text{\rm data}, a_1(M),a_2(M),b_1(M)) > 0
	\end{align*}
	such that, whenever
	\begin{align}\label{H44*}
		\big| A_{k_0, R/2} \big| \leq \theta'' R^N,
	\end{align}
	at least one of the following alternatives holds:
	\begin{align}\label{H45*}
		\sup_{\Omega_{R/4}} \omega \leq k_0 + \frac{N_0}{2}
	\end{align}
	or
	\begin{align}\label{H46*}
		N_0 \leq (\gamma_1'')^{\frac{1}{p_1}} R,
	\end{align}
	where $0 < N_0 < \psi$, $k_0 = \max\{\sup_{\Omega_R} \omega, \sup_{(\partial\Omega)_R}\omega+\psi\} - N_0$, and $\gamma_1''$ is as in \eqref{H6*}.

	Let $m\in\mathbb{N}_0$ and use the same notation as in \eqref{B.seq}. As in the proof of Lemma~\ref{Le-HR.I}, we obtain
	\begin{align}\label{H54*}
		\int_{D_{m + 1}} |\nabla \omega|^{p_1} \, \mathrm{d} x \leq 2^{q^+} \left[  C2^{m\left(q^++1\right)}(N_0 R^{-1})^{p_1} +  \gamma''_1  \right]|A_{k_m,\rho_m}|.
	\end{align}
	Here, again, $C$ denotes a positive constant depending only on the data and on $a_1(M)$, $a_2(M)$ and $b_1(M)$.

	If \eqref{H46*} does not hold, then \eqref{H54*} yields
	\begin{align*}
		\int_{D_{m+1}} |\nabla \omega|^{p_1} \, \mathrm{d} x
		& \leq C2^{(q^++1)m} N_0^{p_1} R^{N - p_1} y_m.
	\end{align*}
	By H\"older's inequality we deduce from the preceding inequality that
	\begin{align}\label{B97}
		\int_{D_{m+1}} |\nabla \omega| \, \mathrm{d} x
		& \leq C2^{\frac{(q^++1)m}{p_1}} N_0 R^{N-1} y_m.
	\end{align}
	Next, observe that $k_m>\sup_{(\partial\Omega)_R}\omega\geq \sup_{(\partial\Omega)_{\rho_m}}\omega$. Therefore,
	\begin{align*}
		\hat{\omega}^{(k_m)}(x) =
		\begin{cases}
			\max\{\omega(x),k_m\}, & x \in \Omega_{\rho_m},\\
			k_m, & x \in B_{\rho_m}\setminus\Omega_{\rho_m},
		\end{cases}
	\end{align*}
	belongs to $W^{1,1}(B_{\rho_m})$ in view of Lemma~\ref{L2.12}. Hence, Lemma \ref{L2.6} can be applied with $k = k_m$, $l = k_{m+1}$, $\rho = \rho_{m+1}$, and $u = \hat{\omega}^{(k_m)}$. Arguing as in the derivation of \eqref{H30*}, we conclude that
	\begin{align*}
		\int_{D_{m+1}} |\nabla \omega| \, \mathrm{d} x \geq \kappa^{-1} \beta^{ - 1} N_0 2^{- (N+2) }\omega_N2^{-m}R^{N-1}y_{m+1}^{1-\frac{1}{N}}.
	\end{align*}
	Bringing this together with \eqref{B97} gives
	\begin{align*}
		y_{m+1} \leq C \bar{b}^m y_m^{1 + \varepsilon_0},
	\end{align*}
	where
	\begin{align*}
		\bar{b} = 2^\frac{ (q^++1+p^-)N}{(N-1)p^-}\quad\text{and} \quad  \varepsilon_0 = \frac{1}{N-1}.
	\end{align*}
	Consequently, if \eqref{H44*} holds with $\theta''$ given by
	\begin{align*}
		\theta'' := C^{-\frac{1}{\varepsilon_0}} \bar{b}^{-\frac{1}{\varepsilon_0^2}},
	\end{align*}
	then, in view of Lemma \ref{leRecur}, we derive again \eqref{H45*}.

	Next, we show that, for $\theta''$ defined above, there exists an integer
	\begin{align*}
		s'' = s''(\text{\rm data}, a_2(M),b_1(M)) > 2
	\end{align*}
	such that either
	\begin{align}\label{H54'*}
		\psi \leq 2^{s''} (\gamma_1'')^{\frac{1}{p_1}} R,
	\end{align}
	or \eqref{H44*} holds with
	\begin{align}\label{H55*}
		k_0 = \max\left\{\sup_{\Omega_R} \omega, \sup_{(\partial\Omega)_R} \omega+\psi\right\}- 2^{- s'' + 1} \psi.
	\end{align}

	Let $s''>2$ be an integer to be specified later. For $i = 0, 1, 2, \dots, s'' - 1$, we define $\tilde{k}_i$ and $\tilde{D}_i$ as in \eqref{H37*}. As in the proof of Lemma \ref{Le-HR.I}, we obtain
	\begin{align*}
		\int_{A_{\tilde{k}_i, R/2}} |\nabla \omega|^{p_1} \, \mathrm{d} x \leq 2^{q^+} \left[ C(2^{-i}\psi)^{p_1} R^{-p_1} |A_{\tilde{k}_i,R}| + \gamma_{1}'' |A_{\tilde{k}_i, R}|\right].
	\end{align*}
	If \eqref{H54'*} does not hold, then the preceding estimate yields
	\begin{align}\label{H58*}
		\int_{A_{\tilde{k}_i, R/2}} |\nabla \omega|^{p_1} \, \mathrm{d} x \leq C(2^{-i}\psi)^{p_1} R^{N - p_1}\quad \text{for }i = 0,1,2,\dots,s'' - 2.
	\end{align}
	On the other hand, repeating the argument that led to \eqref{5.122}, we arrive at
	\begin{align}\label{H59*}
		\int_{\tilde{D}_i} |\nabla \omega| \, \mathrm{d} x \geq  \kappa^{-1}\omega_N \beta^{-1} 2^{-i - 1} \psi |A_{\tilde{k}_{i+1}, R/2}|^{1 - \frac{1}{N}}.
	\end{align}
	Repeating the arguments that led to \eqref{H41*} and invoking \eqref{H58*} and \eqref{H59*}, we conclude that there exists an integer $s'' > 2$ such that \eqref{H44*} is satisfied with $k_0$ given by \eqref{H55*}.
	Then, as shown above, at least one of \eqref{H45*} and \eqref{H46*} is valid with $N_0=2^{- s''+1}\psi$. If \eqref{H45*} holds, then by proceeding as in the derivation of \eqref{H42*}, we obtain
	$$\operatorname{osc}_{\Omega_R}u\leq 4\alpha_0R^{\beta_0} \quad \text{or} \quad \operatorname{osc}_{\Omega_{R/4}} u \leq (1 - 2^{-s'' }\ell^{ - 1}) \operatorname{osc}_{\Omega_R} u.$$
On the other hand, if \eqref{H46*} holds with $N_0=2^{- s''+1}\psi$, then we obtain	\begin{align*}
		\operatorname{osc}_{\Omega_R} u \leq  \ell 2^{s'' - 1} (\gamma''_1)^{\frac{1}{p_1}} R^{\beta_0}.
	\end{align*}
	Finally, if \eqref{H54'*} holds, then
	\begin{align*}%\label{H43**}
		\operatorname{osc}_{\Omega_R} u \leq \ell 2^{s''} (\gamma''_1)^{\frac{1}{p_1}} R^{\beta_0}.
	\end{align*}
	In summary, we infer that for every  $0 < R \leq R'$, at least one of the following alternatives holds:
	\begin{align*}
		\operatorname{osc}_{\Omega_R} u \leq \tilde{c}_0 R^{\beta_0}\quad \text{or}\quad \operatorname{osc}_{\Omega_{R/4}} u \leq \tilde{\theta} \operatorname{osc}_{\Omega_R} u,
	\end{align*}
	where
	\begin{align*}
		\tilde{c}_0=\max\left\{4\alpha_0,\ell 2^{s'} (\gamma_1'+1)^{\frac{1}{p^-}}, \ell 2^{s''} (\gamma_1''+1)^{\frac{1}{p^-}}\right\}, \quad \tilde{\theta} := \max\left\{1 - \ell^{-1} 2^{-s'},1 - \ell^{-1} 2^{-s''}\right\}.
	\end{align*}
	An application of Lemma \ref{L2.7} with $\rho_0 = R'$, $\rho = \frac{R}{4}$, $\varepsilon=\beta_0$ and $b = 4$ completes the proof of the lemma.
\end{proof}

%********************************************************************
\subsection{Proof of global H\"older continuity}
%******************************************************************

\begin{proof}[Proof of Theorem~\ref{Th.H}]
	By the compactness of $\overline{\Omega}$ and Lemmas \ref{Le-HR.I} and \ref{Le-HR.B}, we find $R_0>0$, $C>0$ and $\alpha\in (0,1]$ such that
	\begin{align*}
		\operatorname{osc}_{B_R(x_0)\cap\overline{\Omega}}u\leq CR^{\alpha}
	\end{align*}
	for every ball $B_R(x_0)$ satisfying $B_R(x_0)\cap\Omega\ne \emptyset$.
	Let $x,z\in \overline{\Omega}$, $x\neq z$. We distinguish two cases.

	\textbf{Case 1:} $|x-z|<R_0$.

	Set $R:=|x-z|$. Then, $B_R(x)\cap \Omega\neq \emptyset$. Therefore,
	\begin{align*}
		|u(x)-u(z)|\le \operatorname{osc}_{B_R(x)\cap \overline{\Omega}} u\le C R^\alpha = C |x-z|^\alpha.
	\end{align*}

	\textbf{Case 2:} $|x-z|\ge R_0$.

	Let $M=\|u\|_{L^\infty(\Omega)}$.  Since $|x-z|\ge R_0$, it follows that
	\begin{align*}
		|u(x)-u(z)|\leq 2M
		\le \frac{2M}{R_0^\alpha}|x-z|^\alpha.
	\end{align*}

	Combining the estimates obtained in the two cases, we arrive at
	\begin{align*}
		|u(x)-u(z)| \le C_* |x-z|^\alpha \quad \text{for all }x,z\in \overline{\Omega},
	\end{align*}
	where
	\begin{align*}
		C_*:=\max\left\{C,\frac{2M}{R_0^\alpha}\right\}.
	\end{align*}
	Consequently, $u\in C^{0,\alpha}(\overline{\Omega})$. The proof is complete.
\end{proof}

\begin{proof}[Proof of Theorem~\ref{Th.H'}]
	Let $u\in \Wpzero{\mathcal{S}}$ be a weak solution of problem \eqref{D}. Then, $u\in L^\infty(\Omega)$ in view of Theorem~\ref{CD.boundedness}. Note that, by \textnormal{(\hyperlink{H1}{H$_1$})}\textnormal{(\hyperlink{H1i}{i})}, hypothesis
	\textnormal{(\hyperlink{H4}{H$_4$})} is satisfied with $\alpha=\frac{p^+}{N}\in(0,1)$. Moreover, \textnormal{(\hyperlink{H1}{H$_1$})}(ii) yields
	\begin{align*}
		q(x)-p(x)<\frac{p(x)}{N}\leq\frac{p^+}{N} \quad\text{for all }x\in\overline{\Omega}.
	\end{align*}
	Hence, by compactness of $\overline{\Omega}$, there exists $\varepsilon>0$ such that
	\begin{align*}
		q(x)-p(x)-\frac{p^+}{N}\leq-\varepsilon \quad\text{for all }x\in\overline{\Omega}.
	\end{align*}
	Consequently, \eqref{TA3} holds with $\alpha=\frac{p^+}{N}$. Therefore, all arguments in the proofs of Proposition \ref{Le-HCac.I} and Lemmas \ref{Le-HR.I}--\ref{Le-HR.B} remain valid for $u$. Moreover, by a standard approximation argument, one can show that $\varphi = \zeta^{\mathfrak{s}^+} (\omega-k)_+$ in the proofs of Proposition \ref{Le-HCac.I} and Lemma \ref{Le-HCac.B} belongs to $\Wpzero{\mathcal{S}}$. Therefore, it is indeed an admissible test function in \eqref{def_sol_D}.
\end{proof}

We conclude the paper with the following remark concerning the definition of weak solutions to problem \eqref{E-H}.

\begin{remark}
	It is well known that, in the case $\mathcal{S}(x,t)=t^{p(x)}$ with $p\in C^{0, \frac{1}{|\log t|}}(\overline{\Omega})$, we have
	\begin{equation}\label{ESet}
	\Wpzero{\mathcal{S}}=W^{1,\mathcal{S}}(\Omega)\cap W_0^{1,1}(\Omega),
	\end{equation}
	see Fan--Zhao \cite{Fan-Zhao-2001}. For the general case, it is clear that $\Wpzero{\mathcal{S}}\subseteq W^{1,\mathcal{S}}(\Omega)\cap W_0^{1,1}(\Omega)$. Since we have been unable to verify \eqref{ESet}, we define weak solutions to problem \eqref{E-H} using a larger class of test functions.
\end{remark}

%********************************************************************
\section*{Acknowledgment}
%********************************************************************

%The authors thank the anonymous reviewers for their careful reading and valuable comments, which helped to improve the manuscript.
K. Ho was supported by the University of Economics Ho Chi Minh City (UEH), Vietnam. Y.-H. Kim was supported by the National Research Foundation of Korea (NRF) grant funded by the Korea government (MSIT) (IRIS RS-2025-24533710).

%********************************************************************
%\section*{Declarations}
%%********************************************************************
%
%\subsection*{Ethical Approval}
%Not applicable.
%
%\subsection*{Competing interests}
%The authors declare that they have no competing interests.
%
%\subsection*{Authors' contributions}
%The authors contributed equally to this work.
%
%\subsection*{Availability of data and materials}
%Data sharing is not applicable to this article as no new data were created or analyzed in this study.

%********************************************************************

\end{document}